\documentclass[10pt]{amsart}
\usepackage[margin=1.2in,marginparsep=0.1in,marginparwidth=1in]{geometry}
\usepackage{amssymb,amsmath,amsthm,amstext,amscd,latexsym,graphics,graphicx,bbm,caption}
\usepackage[usenames,dvipsnames,svgnames,table]{xcolor}
\usepackage{blindtext}
\usepackage[plainpages=false,colorlinks=true, pagebackref]{hyperref}
\usepackage{tikz}
\usepackage[all]{xy}
\usetikzlibrary{positioning}

\tikzset{>=stealth}
\hypersetup{citecolor=Sepia,linkcolor=blue, urlcolor=blue}

\usepackage{cite}   

\makeatletter
\def\@tocline#1#2#3#4#5#6#7{\relax
  \ifnum #1>\c@tocdepth 
  \else
    \par \addpenalty\@secpenalty\addvspace{#2}%
    \begingroup \hyphenpenalty\@M
    \@ifempty{#4}{%
      \@tempdima\csname r@tocindent\number#1\endcsname\relax
    }{%
      \@tempdima#4\relax
    }%
    \parindent\z@ \leftskip#3\relax \advance\leftskip\@tempdima\relax
    \rightskip\@pnumwidth plus4em \parfillskip-\@pnumwidth
    #5\leavevmode\hskip-\@tempdima
      \ifcase #1
       \or\or \hskip 2em \or \hskip 2em \else \hskip 3em \fi%
      #6\nobreak\relax
    \dotfill\hbox to\@pnumwidth{\@tocpagenum{#7}}\par
    \nobreak
    \endgroup
  \fi}
\makeatother

\newcommand{\A}{\mathbb{A}}

\newcommand{\C}{\mathbb{C}}

\newcommand{\F}{\mathbb{F}}
\newcommand{\Z}{\mathbb{Z}}

\newcommand{\Q}{\mathbb{Q}}
\newcommand{\R}{\mathbb{R}}


\newcommand{\p}{\mathfrak{p}}
\newcommand{\q}{\mathfrak{q}}

\newcommand{\Aa}{\mathfrak{a}}

\newcommand{\OK}{\mathcal{O}}

\DeclareMathOperator{\ab}{ab}

\DeclareMathOperator{\Gal}{Gal}

\DeclareMathOperator{\GL}{GL}

\DeclareMathOperator{\rk}{rk}
\DeclareMathOperator{\Sl}{\mathfrak{sl}}
\DeclareMathOperator{\SL}{SL}
\DeclareMathOperator{\tr}{tr}
\theoremstyle{plain}
\input xy
\xyoption{all}

\newtheorem{theorem}{Theorem}[section]
\newtheorem{corollary}[theorem]{Corollary}
\newtheorem{proposition}[theorem]{Proposition}
\newtheorem{lemma}[theorem]{Lemma}
\newtheorem{conjecture}[theorem]{Conjecture}
\theoremstyle{definition}
\newtheorem{definition}[theorem]{Definition}

\newtheorem{question}[theorem]{Question}
\theoremstyle{remark}
\newtheorem{remark}[theorem]{Remark}
\numberwithin{equation}{section}

\begin{document}

\author[R. Abdellatif]{Ramla Abdellatif} \address{Laboratoire Ami\'enois de Math\'ematique Fondamentale et Appliqu\'ee (LAMFA) -- Universit\'e de Picardie Jules Verne, 33 rue Saint-Leu , 80 039 Amiens Cedex 1, France} \email{ramla.abdellatif@math.cnrs.fr} 
\author[S. Pisolkar]{Supriya Pisolkar} \address{Indian Institute of
Science, Education and Research (IISER),  Homi Bhabha Road, Pashan,
Pune - 411008, India} \email{supriya@iiserpune.ac.in} 
\author[M. Rougnant]{Marine Rougnant} \address{Laboratoire de Math\'ematiques de Besan\c{c}on, UFR Sciences et techniques, 16, route de Gray, 25030 Besan\c{c}on cedex, France} \email{marine.rougnant@univ-fcomte.fr}
\author[L. Thomas]{Lara Thomas} \address{Institut Camille Jordan - Lyon/Saint-Etienne -- Universit\'e Lyon 1, 43, boulevard du 11 Novembre 1918, 69622 Villeurbanne, France} \email{lthomas@math.cnrs.fr}

\title{From Fontaine-Mazur conjecture to analytic pro-$p$-groups : a survey} 
\date{November 3rd, 2021 -- V3}

\begin{abstract} Fontaine-Mazur Conjecture is one of the core statements in modern arithmetic geometry. Several formulations were given since its original statement in 1993, and various angles have been adopted by numerous authors to try to tackle it. Among those, a range of tools that is not so well-known among young arithmetic geometers, goes back to Boston's seminal paper in 1992, and relies on purely group-theoretic methods (rather than representation-theoretic ones) to prove some special cases of this conjecture. Such methods have been later successfully carried on by Maire and his co-authors, and brings different informations on the objects involved in the conjecture. This survey article aims to review what is known in this direction and to present some interesting related questions the authors (aim to) work on.
\end{abstract}

\maketitle

\tableofcontents


\section{Introduction}\label{intro}
Fontaine-Mazur Conjecture is one of the core statements in modern arithmetic geometry. Several formulations were given since its original statement (as appeared in \cite{FM}), and various angles has been adopted by numerous authors to try to tackle it. To state the original Fontaine-Mazur Conjecture (FMC), we need to introduce some definitions and notations. Let $K$ be a number field, let $\overline{K}$ be a fixed separable closure of $K$ and let $G_{K}:=\Gal(\overline{K}/K)$ be the corresponding absolute Galois group. Given a prime number $p$, a finite extension $F$ of the field $\Q_{p}$ of $p$-adic numbers and a profinite group $G$, an {\it $F$-representation of $G$} is a finite-dimensional vector space over $F$ equipped with a continuous and linear action of $G$. When $F = \Q_{p}$, we call it a {\it $p$-adic representation of $G$}. A $p$-adic representation $\rho$ of $G_{K}$ is called {\it geometric} if it ramifies only at a finite number of places of $K$, and if for each place $v$ of $K$ above $p$, the restriction of $\rho$ to $G_{v}$ is potentially semi-stable in the sense of Fontaine \cite[Section 1.8]{F}. Finally, for any integer $r$, we let $\Q_{p}(r)$ denote the $r^{th}$ Tate twist of $\Q_{p}$, as defined in \cite[\S 3]{T}.

\begin{conjecture}[Fontaine, Mazur]\label{FMC1}
An irreducible $p$-adic representation of $G_{K}$ is geometric if, and only if, it is isomorphic to a subquotient of an \'etale cohomology group with coefficients in $\Q_{p}(r)$, for some $r \in \Z$, of a (projective, smooth) algebraic variety over $K$.
\end{conjecture}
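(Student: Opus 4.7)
The plan is necessarily aspirational rather than complete: Conjecture~\ref{FMC1} is a notoriously open problem, and only partial cases are currently known. My strategy would be to treat the two implications very differently. The backward direction---that every subquotient of an \'etale cohomology group with $\Q_{p}(r)$-coefficients of a smooth projective $K$-variety $X$ is geometric---is essentially a citation to $p$-adic Hodge theory: smooth and proper base change, applied to a spreading-out of $X$ over a model of finite type over $\OK_{K}[1/N]$, confines ramification to the primes of bad reduction together with $p$, while Fontaine's $C_{\mathrm{st}}$-conjecture (now Tsuji's theorem) secures potential semi-stability at every place above $p$. I would record this implication and move on.

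For the forward direction I would pursue two complementary strategies in parallel. Strategy~A is automorphic, following Kisin and Emerton: given a geometric irreducible $\rho$, reduce modulo $p$, invoke residual modularity (Serre's conjecture, now Khare--Wintenberger, in dimension two over $\Q$), and then use an $R=\mathbf{T}$ theorem to identify the universal deformation ring of $\overline{\rho}$ with a localised Hecke algebra, thereby realising $\rho$ in the \'etale cohomology of a modular curve or of a suitable Shimura variety. Strategy~B, the group-theoretic path that is the leitmotif of the present survey, follows Boston: since $\rho(G_{K})$ is automatically a $p$-adic analytic pro-$p$-by-finite group, one would factor $\rho$ through an appropriate quotient such as $G_{K,S}(p)$, compute or bound its Zassenhaus filtration, and apply Lazard's characterisation to show, in favourable cases, that the quotient admits no infinite $p$-adic analytic image at all, forcing $\rho$ to be an Artin representation and hence trivially geometric.

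The hard part, identical in both approaches, is the absence of sufficient structural input outside low-rank, well-understood residual, or explicitly presented situations: residual modularity, modularity lifting, and vanishing of adjoint Selmer groups on the automorphic side; and sufficiently fine control of the Zassenhaus filtration of $G_{K,S}(p)$ together with generation data for the cohomology $H^{i}(G_{K,S},\F_{p})$ on the group-theoretic side. I expect any eventual complete solution to combine the two viewpoints: group-theoretic obstructions first narrowing the class of images of geometric $\rho$, and then automorphic constructions realising what remains inside cohomology. The present survey accordingly restricts itself to assembling the group-theoretic ingredients and the specific families of cases in which Strategy~B has so far succeeded, leaving the general statement of Conjecture~\ref{FMC1} as an organising principle rather than a theorem.
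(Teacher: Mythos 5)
The statement you were asked about is Conjecture~\ref{FMC1} itself: the paper offers no proof of it, because none exists. It is stated as the open organising conjecture of the survey, and everything that follows in the paper concerns strictly weaker consequences (Conjectures~\ref{FMC3}, \ref{FMCU}, \ref{TFMC}, \ref{TFMC-Uniform}) and special cases of those. So there is no proof in the paper to compare your proposal against, and you are right to present your text as a research programme rather than an argument. Within that framing, your sketch of the backward direction is essentially the standard one (spreading out plus smooth proper base change to confine ramification, and the $C_{\mathrm{st}}$ comparison theorem for potential semi-stability at places above $p$), and your Strategy~A correctly identifies the Kisin--Emerton automorphic route that the paper cites for the two-dimensional case over $\Q$.

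One substantive correction to Strategy~B: the Boston--Hajir--Maire group-theoretic methods surveyed here do not attack Conjecture~\ref{FMC1} by showing that a geometric $\rho$ ``is an Artin representation and hence trivially geometric.'' They attack the \emph{weak} and \emph{tame} variants, whose content is purely negative: under suitable hypotheses on $K$, the group $G_{S}$ (or the everywhere-unramified Galois group) admits no infinite $p$-adic analytic quotient, so any would-be counterexample representation has finite image. This rules out a specific family of potential counterexamples to Conjecture~\ref{FMC1}; it does not produce, for a given geometric $\rho$, a variety whose cohomology realises it, which is what the forward implication of Conjecture~\ref{FMC1} demands. Your closing paragraph partially acknowledges this, but the claim that the group-theoretic path could establish the forward direction in ``favourable cases'' overstates what these methods deliver: even when they succeed completely, they only show the image is finite, and finite-image irreducible representations still require an independent argument (the Artin case of the conjecture) to be matched with \'etale cohomology.
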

\noindent In short, Fontaine-Mazur conjecture predicts that $p$-adic representations of global Galois groups that are potentially semi-stable at primes dividing $p$ and unramified outside finitely many places ought to come from algebraic geometry. \\

In the recent years, substantial progress has been made using $p$-adic representations and deformation theory, allowing for instance Kisin \cite{K} to prove the original conjecture for $2$-dimensional representations. Applied to $K = \Q$, this special case of the conjecture asserts that potentially semi-stable representations with odd determinant come from modular forms. 
\begin{conjecture}[Fontaine-Mazur Conjecture for $n = 2$]\label{FMC2} Let $\rho : \Gal(\overline{\Q}/\Q) \to GL_{2}(\Q_{p})$ be an odd, irreducible representation that is unramified outside finitely many primes and whose restriction to the decomposition group at $p$ is potentially semistable with distinct Hodge-Tate weights. Then $\rho$ is the twist of a Galois representation associated to a modular form of weight $k \geq 2$.
\end{conjecture}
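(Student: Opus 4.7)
The plan is to follow Kisin's strategy, which combines three major inputs: the Khare-Wintenberger proof of Serre's modularity conjecture for the residual representation, a modularity lifting theorem of Taylor-Wiles-Kisin type adapted to the potentially semi-stable setting at $p$, and the $p$-adic local Langlands correspondence for $\GL_2(\Q_p)$, which is used to control the local deformation ring at $p$.

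First I would attempt to reduce the statement to showing modularity of $\rho$ itself, from which the conclusion follows by Deligne's construction of Galois representations attached to modular forms (possibly after a twist). Let $\bar\rho : \Gal(\overline{\Q}/\Q) \to \GL_2(\overline{\F}_p)$ denote the semisimplified residual representation. Since $\rho$ is odd, $\bar\rho$ is odd as well. Assuming first that $\bar\rho$ is absolutely irreducible, the theorem of Khare-Wintenberger gives a modular form $f_0$ of some weight and level whose associated mod $p$ Galois representation is isomorphic to $\bar\rho$; this serves as the base point of the deformation problem.

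The next step is to set up a global deformation problem whose universal ring $R$ parametrises lifts of $\bar\rho$ that are unramified outside a suitable finite set $S$ containing $p$ and the ramification of $\rho$, and that are potentially semi-stable at $p$ with the Hodge-Tate weights prescribed by $\rho$; one then compares $R$ with an appropriate Hecke algebra $T$ acting on a space of modular forms at the matching level, weight, and $p$-adic Hodge type. The Taylor-Wiles-Kisin patching method reduces the identity $R = T$ to a local statement about the potentially semi-stable deformation ring at $p$, and the conclusion that $\rho$ is modular then follows from the fact that $\rho$ corresponds to a homomorphism $R \to \overline{\Q}_p$ which, via $R = T$, factors through the Hecke algebra.

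The hard part will be controlling the local deformation ring at $p$ in the non-crystalline potentially semi-stable regime: one must show that every irreducible component of this ring contains an automorphic point, so that the patched module is supported on the whole spectrum. This is where the $p$-adic local Langlands correspondence for $\GL_2(\Q_p)$ plays its decisive role, attaching to each component a unitary Banach space representation whose occurrence in completed cohomology produces the required automorphic lifts. Finally, the case where $\bar\rho$ is reducible escapes Serre's conjecture and must be treated separately, either by the ordinary modularity lifting techniques of Skinner-Wiles or by a solvable base change argument, after which one descends modularity back to $\Q$ using cyclic base change for $\GL_2$.
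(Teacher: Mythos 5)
The paper does not prove this statement: it records it as a conjecture whose resolution is attributed to Kisin \cite{K}, describing the proof only as resting on modularity lifting theorems, the Breuil--M\'ezard conjecture, and Breuil's $p$-adic local Langlands correspondence for $\GL_{2}(\Q_{p})$. Your outline follows exactly that route (Khare--Wintenberger for the residual representation, Taylor--Wiles--Kisin patching reducing modularity to the structure of the potentially semi-stable local deformation ring at $p$, and $p$-adic local Langlands to supply automorphic points on its components, with Skinner--Wiles handling the residually reducible case), so it matches the approach the paper cites; note only that it is a roadmap through several deep external theorems rather than an argument that can be verified against anything in this survey.
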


\noindent Kisin's proof relies on an intimate connection between modularity lifting theorems, the Breuil-M\'ezard conjecture \cite{BM}, and Breuil's $p$-adic local Langlands correspondence for $\GL_{2}$ \cite{Br1, Br2}. We are here interested in a different approach, which can be motivated as follows.  First note that, together with a well-known conjecture of Tate, Conjecture \ref{FMC1} implies the following conjecture, which is elementary to state but completely out of reach for now.
 
\begin{conjecture}[Weak Fontaine-Mazur Conjecture]\label{FMC3} Every unramified pro-$p$-extension of $K$ whose Galois group is $p$-adic analytic is finite.
\end{conjecture}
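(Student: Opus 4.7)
The plan is to proceed by contradiction: assume $L/K$ is an \emph{infinite} unramified pro-$p$ extension whose Galois group $G := \Gal(L/K)$ is $p$-adic analytic, and try to force a contradiction. Enlarging $L$ if necessary, we may take $L$ to be the maximal unramified pro-$p$ extension of $K$, so that $G$ becomes a canonical arithmetic object attached to $K$.

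The first ingredient is arithmetic. For every open subgroup $H \leq G$, the fixed field $K_{H} := L^{H}$ is a finite extension of $K$, and $L/K_{H}$ is the maximal unramified pro-$p$ extension of $K_{H}$. Unramified class field theory identifies $H^{\ab}$ with the $p$-Sylow subgroup of the ideal class group of $K_{H}$, which is finite. Hence $G$ enjoys the \emph{FAB property}: every open subgroup has finite abelianization. Lazard's equivalence then attaches to $G$ (after passing to an open uniform subgroup) a finite-dimensional $\Q_{p}$-Lie algebra $\mathfrak{g}$, with $\dim_{\Q_{p}} \mathfrak{g} = 0$ if and only if $G$ is finite. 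Under this equivalence, FAB translates into the assertion that $\mathfrak{g}$, and each of its subalgebras of finite codimension, is \emph{perfect} in the sense that $[\mathfrak{g},\mathfrak{g}] = \mathfrak{g}$. The target statement thus becomes: no nonzero such $\mathfrak{g}$ arises from an unramified tower.

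The second ingredient is local. Because $L/K$ is unramified at every prime, including those above $p$, each decomposition subgroup $D_{v} \leq G$ is topologically procyclic, generated by a Frobenius element. Combined with Chebotarev density, the Frobenii are dense in $G$, so in Lie-algebraic terms $\mathfrak{g}$ is topologically generated by elements each lying in a one-dimensional abelian subalgebra. I would then try to play this against perfectness: in a nonzero perfect $\mathfrak{g}$ some bracket of generators must be nontrivial, and one would aim to exploit the very restrictive shape of Frobenius-type generators together with the global FAB constraints, in the spirit of Boston's purely group-theoretic approach alluded to in the introduction.

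The main obstacle, and the reason Conjecture~\ref{FMC3} remains open in general, lies precisely here: density of Frobenii in $G$ does not imply that their $\Q_{p}$-span is Lie-generating, and nonzero perfect FAB $p$-adic analytic pro-$p$ groups do exist abstractly (for instance, the first congruence subgroups of $\SL_{n}(\Z_{p})$ for $n \geq 2$), so FAB alone cannot force $\mathfrak{g} = 0$. A complete proof would therefore need either an additional arithmetic constraint on Frobenii in an everywhere-unramified setting (some integrality or weight-type condition, which is not obviously available outside potentially semi-stable situations), or a refined structural obstruction of Golod--Shafarevich flavour adapted to the analytic setting, along the lines pursued by Boston, Maire and their collaborators whose work this survey proposes to review.
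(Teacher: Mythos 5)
There is no proof to compare against here: the statement you were given is Conjecture~\ref{FMC3}, an open conjecture, and the paper explicitly describes it as ``elementary to state but completely out of reach for now.'' The paper does not prove it; it only records that it would follow from Conjecture~\ref{FMC1} together with a conjecture of Tate, reformulates it via Lazard's theorem as the uniform version (Conjecture~\ref{FMCU}), and then surveys Boston's proofs of special cases (Theorem~\ref{Boston-main}), which require extra hypotheses such as $K/F$ cyclic of degree prime to $p$ with $p \nmid h(F)$. Your submission, to its credit, is not really a proof attempt either: you set up the FAB property via unramified class field theory, pass to a uniform subgroup and its $\Q_p$-Lie algebra, invoke Chebotarev density of Frobenii, and then correctly concede that the argument cannot close because FAB alone does not force the Lie algebra to vanish (the congruence subgroups $\Gamma_{n,1} \subset \SL_n(\Z_p)$ of Theorem~\ref{ThmMainExampleSLn} being FAB, perfect, and infinite).

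The concrete gap is therefore simply that nothing is proved: the reduction to a perfect Lie algebra generated by procyclic Frobenius subalgebras is a restatement of the difficulty, not a resolution of it, and the final paragraph of your write-up is an accurate diagnosis of why the conjecture is open rather than a missing lemma one could supply. One smaller technical caution: FAB for all open subgroups of $G$ translates into perfectness of the single Lie algebra $\mathfrak{g}$ attached to the commensurability class of $G$, not into a statement about all finite-codimension subalgebras, since every open subgroup yields the same $\mathfrak{g}$. If you want to turn this sketch into actual theorems, the route taken in the paper is to impose the additional arithmetic input you correctly identify as missing, namely a fixed-point-free cyclic action of $\Gal(K/F)$ on $\Gal(L/K)$ compatible with the filtration of a uniform (or self-similar) group, which is exactly Boston's argument in Section~\ref{BostonProof}.
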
 
\noindent Now recall that in \cite{GS}, Golod and Shafarevich proved the existence of a number field $K$ and a prime $p$ such that $K$ admits an everywhere unramified infinite pro-$p$ extension $L$. Conjecture \ref{FMC3} then claims that the Galois group of this extension $L/K$ cannot be an infinite analytic pro-$p$ group (i.e. something that is isomorphic to a closed subgroup of $GL_{n}(\Z_{p})$ for some positive integer $n$). Hence the idea here is that a counter-example to Conjecture \ref{FMC3} would produce an everywhere unramified Galois representation with infinite image, which cannot ``come from algebraic geometry" in the sense of Conjecture \ref{FMC1}.\\
  
\noindent From Lazard's seminal work on $p$-adic analytic groups \cite[III, 3.4.3]{L}, we know that any finite-dimen- sional $p$-adic analytic group contains a finite index open uniform subgroup. Thus we can reformulate Conjecture \ref{FMC3} as follows. 

\begin{conjecture}[Uniform Fontaine-Mazur Conjecture (UFMC)]\label{FMCU} There is no number field $K$ for which there exists an infinite everywhere unramified Galois pro-$p$ extension $L$ such that $\Gal(L/K)$ is uniform. 
\end{conjecture}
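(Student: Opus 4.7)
\emph{Proof strategy.} Suppose for contradiction that a pair $(K, L)$ as in the statement exists, and set $G := \Gal(L/K)$, uniform of dimension $d \geq 1$. The plan is to combine the cohomology of uniform pro-$p$ groups with global class field theory and Koch's formulas for unramified pro-$p$ extensions. For $p$ odd, Lazard identifies $H^{\ast}(G, \F_p)$ with the exterior algebra on $H^1(G, \F_p) \cong \F_p^d$, so the minimal number of generators and relations of $G$ satisfy
\[
d(G) = d \qquad \text{and} \qquad r(G) = \binom{d}{2};
\]
the $p = 2$ case is handled by an analogous, slightly shifted computation.

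Next I would exploit the everywhere-unramified hypothesis. Global reciprocity identifies $G^{\mathrm{ab}}$ with a quotient of the $p$-Sylow of $\mathrm{Cl}(K)$, hence $G^{\mathrm{ab}}$ is \emph{finite}. Since $G^{\mathrm{ab}} \otimes_{\Z_p} \Q_p \cong \mathfrak{g}/[\mathfrak{g},\mathfrak{g}]$, where $\mathfrak{g}$ is the $\Q_p$-Lie algebra of $G$, this forces $\mathfrak{g}$ to be perfect, immediately ruling out all solvable uniform counterexamples. To proceed further I would place $G$ inside $\mathcal{G} := G_K(\emptyset, p)$, the Galois group of the maximal everywhere unramified pro-$p$ extension of $K$. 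Koch's formulas bound $r(\mathcal{G})$ in terms of $d(\mathcal{G})$, the signature of $K$, and whether $\mu_p \subset K$; lifting the relations of $G$ through the surjection $\mathcal{G} \twoheadrightarrow G$ via the five-term exact sequence produces a matching lower bound on $r(\mathcal{G})$ of order $\binom{d}{2}$, which combined with Golod-Shafarevich would ideally eliminate $d$ case by case.

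The genuine obstacle -- and the reason UFMC is still open -- is the ``semisimple'' range, where $\mathfrak{g}$ is close to an open subalgebra of $\Sl_n(\Z_p)$. Here Koch's bounds become too loose and Golod-Shafarevich offers no help, because $G$ looks just-infinite from a pro-$p$ viewpoint; ruling out this case seems to require arithmetic input of a Fontaine-theoretic nature at primes above $p$, as opposed to the purely group-theoretic inputs used above. The survey that follows reviews the cases -- small $d$, special classes of $K$, or presence of extra group-theoretic structure -- in which the strategy above (or variants of it, due to Boston, Hajir, Maire and their co-authors) does succeed.
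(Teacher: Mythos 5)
The statement you were asked to prove is a \emph{conjecture}, and the paper offers no proof of it: Conjecture~\ref{FMCU} is presented as a reformulation (via Lazard's theorem that every $p$-adic analytic group contains a finite-index uniform subgroup) of the Weak Fontaine--Mazur Conjecture~\ref{FMC3}, which the authors describe as ``completely out of reach for now.'' So there is nothing in the paper to compare your argument against line by line, and to your credit you do not claim to have closed the argument --- you correctly flag the semisimple range as the open case. Two substantive comments on your sketch. First, the step where you ``lift the relations of $G$ through the surjection $\mathcal{G}\twoheadrightarrow G$'' to get $r(\mathcal{G})\gtrsim\binom{d}{2}$ does not follow from the five-term exact sequence: for $1\to N\to\mathcal{G}\to G\to 1$ the inflation map $H^{2}(G,\F_p)\to H^{2}(\mathcal{G},\F_p)$ has kernel the image of $H^{1}(N,\F_p)^{G}$, so the $\binom{d}{2}$ relation classes of $G$ need not remain independent in $\mathcal{G}$; moreover Golod--Shafarevich constrains \emph{finite} $p$-groups and does not by itself eliminate infinite quotients of $\mathcal{G}$. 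Second, the parts of your sketch that do work (finiteness of $G^{\mathrm{ab}}$ from class field theory, hence perfectness of the Lie algebra, hence the exclusion of solvable uniform groups and of dimensions $d\le 2$) are consistent with what the paper proves in Theorem~\ref{Zp-quotient}, but the paper's actual partial results in the unramified setting (Theorems~\ref{Boston-main} and~\ref{B2-main}) proceed by an entirely different mechanism: a prime-to-$p$ cyclic action of $\Gal(K/F)$ on $G$, the self-similarity of the uniform filtration $(P_i(G))$, fixed-point-free automorphisms bounding derived length, and repeated use of the finiteness of class numbers. That method trades your cohomological relation counting for extra arithmetic hypotheses on $K$ (a cyclic subfield $F$ with $p\nmid h(F)$), which is why it yields theorems rather than heuristics --- but only for such $K$.
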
 
\noindent A major advantage in considering uniform groups is that they have a simple characterisation (in terms of filtration by their subgroups) that analytic pro-$p$-groups do not satisfy in general. (Recall that a finitely generated pro-$p$-group is analytic if, and only if, it has a powerful subgroup of finite index.) \\

The first evidence for the truth of Conjecture \ref{FMCU} were given by N. Boston in \cite{B1, B2}, using purely group-theoretic methods based on the connections between powerful pro-$p$-groups and uniform pro-$p$-groups instead of representation-theoretic tools. The goal of this paper is to introduce these purely group-theoretic tools, which are not so well-known among (young) arithmetic geometers, and to review some of the main results they bring about various conjectures related to Conjectures \ref{FMC1} and \ref{FMCU}. We also present some related questions on which is based current and future works of the authors. We think that this survey paper may be of interest to anyone willing to have a different viewpoint on Fontaine-Mazur Conjecture, which does not require advanced knowledge in $p$-adic representation theory and underlines the number-theoretic nature of the problem.\\

This paper is organised as follows. After gathering in Section \ref{uniform groups} the definitions and basic results we need about (uniform) pro-$p$-groups, we devote Section \ref{BostonProof} of this paper to study the proofs of the main result of \cite{B1}, which is the following special case of Conjecture \ref{FMCU}, and its generalisation to cyclic extensions of degree prime to $p$, which is the main result of \cite{B2}.

\begin{theorem}[Boston]\label{Boston-main} Given a prime number $p$ and a number field $F$, let $K$ be a normal extension of $F$ of prime degree $\ell \neq p$ and such that $p$ does not divide $h(F)$, the class number of $F$. Then there is no infinite, everywhere unramified, Galois, pro-$p$ extension $L$ of $K$ such that $L/F$ is Galois and $\Gal(L/K)$ is uniform. 
\end{theorem}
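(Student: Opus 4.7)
The plan is to assume, for contradiction, that such an $L$ exists and to combine class field theory over $F$ with the rigidity of the $\Z_p$-Lie algebra attached to the uniform group $G:=\Gal(L/K)$. The argument splits into a descent step that extracts a vanishing condition from $p\nmid h(F)$, and a Lie-algebraic step that shows this vanishing cannot hold when $G\neq 1$.

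First step (descent). Put $C:=\Gal(K/F)\cong\Z/\ell\Z$ and $H:=\Gal(L/F)$, so $1\to G\to H\to C\to 1$ is exact. Since $G$ is pro-$p$ and $\gcd(\ell,p)=1$, Schur--Zassenhaus gives a splitting $H\simeq G\rtimes C$, equipping $G$ with a $C$-action by continuous automorphisms. Let $M:=L^{[G,G]}$ be the maximal abelian subextension of $L/K$; it is abelian, unramified and pro-$p$ over $K$ with $\Gal(M/K)=G^{\ab}$, and $M/F$ is Galois with group $G^{\ab}\rtimes C$. A commutator computation identifies its maximal abelian quotient with $(G^{\ab})_C\times C$, where $(G^{\ab})_C:=G^{\ab}/(\sigma-1)G^{\ab}$ denotes $C$-coinvariants. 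Let $N\subseteq M$ be the subfield fixed by the summand $\{0\}\times C$, so that $\Gal(N/F)=(G^{\ab})_C$. I would then verify that $N/F$ is unramified: for any prime $v$ of $F$, the intersection of an inertia subgroup in $\Gal(M/F)$ with the normal $p$-Sylow $G^{\ab}$ must be trivial (this intersection is the inertia in $M/K$, which vanishes by assumption), so that inertia subgroup injects into the quotient $C$ and thus lies in a conjugate of the splitting complement $C$. Its image in the abelianisation is contained in $\{0\}\times C$, which projects to $0$ in $(G^{\ab})_C$. The hypothesis $p\nmid h(F)$ kills all nontrivial abelian unramified pro-$p$ extensions of $F$, so $N=F$ and $(G^{\ab})_C=0$. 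Since $\ell$ is a unit in $\Z_p$, the idempotent $e:=\tfrac{1}{\ell}\sum_{i=0}^{\ell-1}\sigma^i\in\Z_p[C]$ splits $G^{\ab}$ as a $\Z_p[C]$-module and identifies coinvariants with invariants, yielding $(G^{\ab})^C=0$.

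Second step (Lie-algebraic rigidity). If $G$ were abelian, then $G=G^{\ab}$ would be a pro-$p$ quotient of the finite ideal class group of $K$ via class field theory, hence finite, contradicting the infiniteness of a nontrivial uniform group. So assume $G$ is non-abelian. Invoke Lazard's correspondence to associate a $\Z_p$-Lie algebra $\mathfrak g$ to $G$, satisfying the powerful inclusion $[\mathfrak g,\mathfrak g]\subseteq p\mathfrak g$ (for $p$ odd), with $C$ acting by Lie automorphisms and with $\mathfrak g^{\ab}\simeq G^{\ab}$ as $\Z_p[C]$-modules. Class field theory again forces $G^{\ab}$ finite, so $\mathfrak g^{\ab}$ is torsion and the $\Q_p$-Lie algebra $V:=\mathfrak g\otimes\Q_p$ is perfect ($V=[V,V]$). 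A nonzero perfect Lie algebra in characteristic zero has a nonzero semisimple Levi summand; arranging via Mostow's theorem that $C$ preserves some Levi, a Borel--Mostow-type rigidity result yields $V^C\neq 0$, so the saturated $\Z_p$-submodule $\mathfrak g^C\subseteq\mathfrak g$ has positive rank. Taking $C$-invariants (exact since $|C|$ is coprime to $p$) in $0\to[\mathfrak g,\mathfrak g]\to\mathfrak g\to\mathfrak g^{\ab}\to 0$ gives $(\mathfrak g^{\ab})^C=\mathfrak g^C/[\mathfrak g,\mathfrak g]^C$, and the powerful inclusion forces $[\mathfrak g,\mathfrak g]^C\subseteq p\mathfrak g\cap\mathfrak g^C=p\mathfrak g^C$, producing a surjection $(\mathfrak g^{\ab})^C\twoheadrightarrow\mathfrak g^C/p\mathfrak g^C$ whose target has $\F_p$-dimension equal to the positive rank of $\mathfrak g^C$. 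This contradicts $(G^{\ab})^C=0$ and forces $G=1$. The main obstacle I anticipate is formulating the Borel--Mostow rigidity statement cleanly for finite-order automorphisms of $\Q_p$-Lie algebras (rather than in its classical real/complex setting) and verifying the $\Z_p[C]$-module identification $\mathfrak g^{\ab}\simeq G^{\ab}$ provided by Lazard's log; the rest of the argument is either a routine class-field-theoretic descent or follows directly from the powerful inclusion.
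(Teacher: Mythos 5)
Your argument is correct in outline, but it takes a genuinely different route from the proof given in Section \ref{BostonProof}. The two proofs open identically: Schur--Zassenhaus yields the action of a generator $\sigma$ of $C:=\Gal(K/F)$ on $G:=\Gal(L/K)$, and the hypothesis $p\nmid h(F)$ is used to show that $\sigma$ acts without non-trivial fixed points on the Frattini quotient --- the paper does this via Maschke's theorem applied to $G/P_{2}(G)$, you do it by abelianising $G^{\ab}\rtimes C$ and invoking class field theory over $F$; these are the same argument in different packaging, and your reduction from coinvariants to invariants via the idempotent $\tfrac{1}{\ell}\sum\sigma^{i}$ is clean. The divergence is in how fixed-point-freeness is turned into a contradiction. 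The paper propagates it to every quotient $G/P_{i}(G)$ using self-similarity, then invokes the condition of Definition \ref{H(G, n)} (bounded derived length of finite quotients admitting a fixed-point-free automorphism) together with an \emph{iterated} application of the finiteness of class numbers over $K$. You instead pass to Lazard's $\Z_{p}$-Lie algebra $\mathfrak{g}$ of the uniform group, observe that $G^{\ab}$ is finite by a \emph{single} application of class field theory over $K$, deduce that $V:=\mathfrak{g}\otimes\Q_{p}$ is perfect, and apply Borel--Mostow to get $V^{C}\neq 0$, contradicting the first step. Your route has the advantage of working for any order of $\sigma$ prime to $p$ without the (in general conjectural) hypothesis $H(G,n)$; its cost is that it is confined to uniform groups, whereas the paper's group-theoretic argument extends to the self-similar groups of Definition \ref{selfsimilargroups} and to Theorem \ref{B2-main}. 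Two points need care. The identification $\mathfrak{g}^{\ab}\simeq G^{\ab}$ that you flag is indeed delicate, but you can bypass it entirely: perfectness of $V$ only requires $\mathrm{Hom}(G,\Q_{p})=H^{1}(G,\Q_{p})\cong H^{1}(\mathfrak{g},\Q_{p})=0$, which follows from the finiteness of $G^{\ab}$, and the final contradiction can be run modulo $p$ using the honest $\F_{p}[C]$-isomorphism $\mathfrak{g}/p\mathfrak{g}\cong G/\Phi(G)$ (for uniform $G$ both are the quotient by the set of $p$-th powers) together with the exactness of $C$-invariants: $V^{C}\neq 0$ forces the free summand $\mathfrak{g}^{C}$ to have positive rank, hence $\left(G/\Phi(G)\right)^{C}\cong\mathfrak{g}^{C}/p\mathfrak{g}^{C}\neq 0$, against step one. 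Finally, your Lie-algebraic step is phrased for odd $p$; for $p=2$ uniformity gives the stronger inclusion $[\mathfrak{g},\mathfrak{g}]\subseteq 4\mathfrak{g}$ and the argument is unchanged, so that restriction should simply be removed to match the statement.
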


\noindent The main ingredient of the group-theoretic methods used to prove these results is the cyclic action on $\Gal(L/K)$ of a generator $\sigma$ of the Galois group $\Gal(K/F)$. This works quite well when the action of $\sigma$ on $\Gal(L/K)$ has no non-trivial fixed point. The study of such actions is in the spirit of what did later Hajir and Maire in \cite{HM}, where they attempted to extend Boston's strategy to the case of (tamely) ramified extensions $L/K$. The challenge here is to handle the fixed points introduced by ramification and, as a consequence, the constraints posed on the arithmetic of $L/K$.

This work of Hajir and Maire motivates our interest in the two following Fontaine-Mazur-style conjectures, where we consider $p$-adic representations that are finitely and tamely ramified: they are respectively known as {\it Tame Fontaine-Mazur Conjecture} (TFMC) \cite[5a]{FM} and {\it Tame Fontaine-Mazur Conjecture - Uniform version} (or {\it Uniform Tame Fontaine-Mazur Conjecture}, UTFMC).

\begin{conjecture}[Tame Fontaine-Mazur Conjecture] \label{TFMC}  Let $K$ be a number field and $S$ be a finite set of places of $K$ that are prime to $p$. Let $G_{S} := \Gal(K_{S}/K)$, where $K_{S}$ denotes the maximal pro-$p$-extension of $K$ that is unramified outside $S$. Then any $p$-adic representation of $G_{S}$ has finite image.
\end{conjecture}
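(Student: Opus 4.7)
The plan is to follow the philosophy advocated in this paper: rather than attacking Conjecture \ref{TFMC} directly via $p$-adic Hodge theory or deformation rings, one first reduces it to a purely group-theoretic statement about quotients of $G_S$, and then exploits the action of tame inertia at the places in $S$ in the spirit of Boston's approach and its extension by Hajir--Maire.

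The first step is to reduce Conjecture \ref{TFMC} to its uniform version (UTFMC), in exactly the manner that Conjecture \ref{FMC3} was reduced to Conjecture \ref{FMCU}. Given a continuous $\rho : G_S \to \GL_n(\Q_p)$, I would compactify to land in $\GL_n(\Z_p)$, invoke the cited result of Lazard to find an open uniform subgroup $U$ of $\rho(G_S)$, and then interpret $U$ as $\Gal(L/K')$ for a finite tame extension $K'/K$ and a tamely ramified pro-$p$ tower $L/K'$. Showing that no such infinite uniform $\Gal(L/K')$ can exist, with ramification restricted to the finite set $S(K')$ of places of $K'$ above $S$, would then force $\rho(G_S)$ to be finite.

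The second step is to analyse an alleged infinite uniform quotient $U = \Gal(L/K)$ via the tame inertia subgroups $I_v$ for $v \in S$, which surject onto procyclic groups of order prime to $p$. Following Boston's strategy, one chooses a generator $\sigma_v$ of the tame quotient of $I_v$ and studies its action by conjugation on $U$ and on the graded pieces of the Zassenhaus (or lower central) filtration of $U$. Because $\sigma_v$ has order prime to $p$, its action on each $\F_p$-vector space piece is semisimple, so $U^{\ab} \otimes \F_p$ splits as a direct sum of isotypic components. The goal is to combine this local semisimple decomposition with the global constraints coming from class field theory for the ray class groups of conductor supported on $S$, in order to bound $\dim_{\Z_p} U^{\ab} \otimes \Q_p$ and obtain a contradiction with uniformity and infiniteness.

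The main obstacle, as already flagged in the introduction, is the presence of non-trivial fixed points of the tame inertia actions: these fixed components correspond to unramified directions and are precisely what Boston's original argument avoided by restricting to cyclic prime-degree extensions of order $\ell \neq p$ with $p \nmid h(F)$. In the tame setting one must carry along both the fixed and non-fixed parts, track how they interact along the Zassenhaus filtration, and rule out the existence of infinite uniform towers nonetheless. Even granting the local tame analysis at a fixed $S$, one must make the bounds uniform over all finite tame extensions $K'/K$ appearing in the reduction step above, which seems to lie genuinely beyond the reach of present Boston-type techniques and is, in my opinion, the reason Conjecture \ref{TFMC} remains open in general.
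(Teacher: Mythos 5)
The statement you are trying to prove is a conjecture, and the paper does not prove it: it only establishes the one-dimensional case by class field theory (Section \ref{CFT-TFMC}) and the equivalence of Conjecture \ref{TFMC} with its uniform version (Section 5.3). Your first step --- compactifying $\rho$ into $\GL_n(\Z_p)$, extracting an open uniform subgroup via Lazard, and passing to a finite extension $K'/K$ --- is exactly the paper's argument for ``TFMC-U $\implies$ TFMC'', so that reduction is sound. But your second step is not an argument; it is a description of the open problem. You yourself concede at the end that the required bounds ``lie genuinely beyond the reach of present Boston-type techniques,'' which is an admission that no proof has been given. A proof proposal that terminates in ``and this is why the conjecture remains open'' cannot be accepted as a proof.

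There is also a concrete technical error in the second step. You propose to let a generator $\sigma_v$ of the tame quotient of inertia at $v \in S$ act on the graded pieces of the filtration of $U$ and to invoke semisimplicity ``because $\sigma_v$ has order prime to $p$.'' Inside the pro-$p$ group $G_S$, the image of tame inertia at $v$ is a quotient of a procyclic group and is itself pro-$p$; every element of $G_S$ has pro-$p$ order, so no inertia element acts with order prime to $p$ and Maschke's theorem does not apply to its action on $\F_p$-vector spaces. The semisimple prime-to-$p$ action exploited by Boston (Theorem \ref{B2-main}) and by Hajir--Maire comes from an \emph{outer} automorphism $\sigma$ generating $\Gal(K/F)$ for an auxiliary base field $F$ with $[K:F]$ coprime to $p$ and $p \nmid h(F)$ --- hypotheses that are simply not available in the general setting of Conjecture \ref{TFMC}. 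What tame inertia at $v$ does give is the local relation $\varphi_v \tau_v \varphi_v^{-1} = \tau_v^{q_v}$ between Frobenius and the tame generator, which is the starting point of a quite different analysis and does not by itself yield the isotypic decomposition you describe.
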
 

\noindent The idea behind this statement is that the eigenvalues of a Frobenius element must become roots of unity under the action of a finitely tamely ramified $p$-adic representation. In this case, the image of such a representation is solvable, hence finite by class field theory.  Class field theory also helps to prove that the conjecture holds for one-dimensional representations: we do this in Section \ref{CFT-TFMC}. For higher-dimensional representations, Conjecture \ref{TFMC} seems for now out of reach in general. 

\begin{conjecture}[Tame Fontaine-Mazur Conjecture - Uniform version] \label{TFMC-Uniform}
Let $K$ be a number field, and $\Gamma$ be a uniform pro-$p$ group of dimension $d>2$ (hence infinite). Then there does not exist a finitely and tamely ramified Galois extension $L/K$ whose Galois group $\Gal(L/K)$ is isomorphic to $\Gamma$. 
\end{conjecture}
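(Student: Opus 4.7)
The plan is to push the group-theoretic strategy of Boston and Hajir-Maire into the uniform tame setting, exploiting the rigidity that uniformity confers via Lazard's correspondence. Assume for contradiction that there exists a finitely and tamely ramified Galois extension $L/K$ with $\Gal(L/K) \cong \Gamma$ uniform of dimension $d > 2$, tamely ramified outside a finite set $S$. Following the reduction behind Theorem \ref{Boston-main}, I would first seek a subfield $F \subset K$ such that $K/F$ is cyclic of prime degree $\ell \neq p$ and $L/F$ is still Galois. A generator $\sigma$ of $\Gal(K/F)$ then acts on $\Gamma$ by a continuous automorphism of order $\ell$; by Lazard's equivalence this transports to a $\Z_{p}$-Lie algebra $\Ll(\Gamma)$ of rank $d$ carrying a semisimple order-$\ell$ automorphism, which after extending scalars to $\Z_{p}[\zeta_{\ell}]$ diagonalises into weight spaces indexed by $\ell^{\text{th}}$ roots of unity, with the Lie bracket graded for this decomposition.

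The arithmetic of $L/K$ can then be read off this decomposition. Class field theory identifies the abelianisation $\Gamma^{\text{ab}}$ with a quotient of a tame ray class group of $K$ with modulus supported on $S$; the $\sigma$-weights appearing in $\Gamma^{\text{ab}}$ are controlled by the local Galois action at the places in $S$ and by the structure of global units. Combined with the uniformity condition $[\Ll(\Gamma),\Ll(\Gamma)] \subseteq p\,\Ll(\Gamma)$ (for $p>2$) and with the graded structure of the bracket, the goal is to force contradictory rank constraints on the trivial and non-trivial weight spaces, eventually squeezing $d \leq 2$.

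The principal obstacle is this middle step. In Boston's unramified setting, the assumption $p \nmid h(F)$ collapses the trivial-weight subspace $\Ll(\Gamma)^{\sigma}$ to zero, which makes the weight analysis essentially immediate; in the tame case, tame inertia at primes of $S$ systematically contributes both trivial and non-trivial $\sigma$-weights, and the sizes of these contributions depend delicately on the ramification data and on the arithmetic of $K$. Obtaining estimates sharp enough to force $d \leq 2$, rather than merely to bound $d$ in terms of $|S|$ or class group invariants, is exactly where the Hajir-Maire program currently stalls; any full resolution will likely demand either a new structural theorem relating uniform Lie algebras to their invariant decompositions, or a finer study of the module structure of tame fundamental groups of number fields over the cyclic group $\Gal(K/F)$.
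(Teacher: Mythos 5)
The statement you were asked about is a \emph{conjecture}, not a theorem: the paper offers no proof of it for $d>2$, and to date none exists. What the paper does prove about Conjecture \ref{TFMC-Uniform} is (i) its equivalence with Conjecture \ref{TFMC} (via Theorem \ref{p-adic-structure}, the faithful embedding of finite-rank pro-$p$-groups into $\GL_{n}(\Q_{p})$, and Proposition \ref{uniform implies torsion free}), and (ii) that the excluded dimensions $d=1,2$ genuinely need not be considered, by reducing them through $\Z_{p}$-quotients (Theorem \ref{Zp-quotient}) to the $n=1$ case of TFMC, which follows from class field theory (Theorem \ref{ThmTFMCn=1}). So a ``proof'' of the statement cannot be compared against the paper's proof --- there is none --- and your proposal should be judged as a research outline.

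As such an outline it is honest but has two concrete gaps. First, your opening reduction already fails for general $K$: you posit a subfield $F\subset K$ with $K/F$ cyclic of prime degree $\ell\neq p$ and $L/F$ Galois, but no such $F$ need exist (take $K=\Q$), and even when it does, Boston's and Hajir--Maire's arguments additionally require $p\nmid h(F)$ and $L/F$ Galois, neither of which is granted by the hypotheses of the conjecture. The conjecture quantifies over all number fields $K$, whereas the fixed-point-free-action machinery only applies to fields presented with a suitable cyclic descent datum. Second, as you yourself acknowledge, the central step --- converting the $\sigma$-weight decomposition of the Lie algebra together with tame ray class field theory into a contradiction forcing $d\leq 2$ --- is precisely where the known methods stop: tame inertia at places of $S$ injects nontrivial fixed points into $\Gamma^{\ab}$ under $\sigma$, destroying the mechanism that makes Boston's unramified argument (Theorem \ref{B2-main}) work. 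An outline whose key step is flagged as unresolved is not a proof, and presenting it as one would misrepresent the status of the statement; the correct conclusion is that Conjecture \ref{TFMC-Uniform} remains open, with the paper's contribution being the equivalence with TFMC and the elimination of the low-dimensional cases.
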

 
\noindent We elaborate on how these statements connect in Section \ref{UTFMC}, and on how the assumptions made matter. In particular, we discuss a counterexample to Fontaine-Mazur Conjecture when ramification at $p$ is allowed. In Section \ref{future plans}, we expose some connected questions that are of interest to us.\\

\section{General notations}\label{notation}
\noindent From now on, we fix a prime integer $p$. We let $\Q_{p}$ be the field of $p$-adic numbers and $\Z_{p}$ be its ring of integers. Since the ring $\Z_{p}$ is isomorphic to the projective limit $\lim\limits_{\substack{\longleftarrow \\ k \geq 1}} \Z/p^{k}\Z$, there exists, for any integer $k \geq 1$, a natural ring isomorphism $\psi_{k} : \Z_{p}/p^{k}\Z_{p} \simeq \Z/p^{k}\Z$. These ring isomorphisms allows us to define {\it principal congruence subgroups} in this $p$-adic setting as follows. Given any integer $n \geq 2$, we consider the special linear group $\SL_{n}(\Z_{p})$, which is a maximal open compact subgroup of $\SL_{n}(\Q_{p})$. For any integer $k \geq 1$, we can then define the $k^{th}$-principal congruence subgroup of $\SL_{n}(\Z_{p})$ as $\Gamma_{n,k} := \ker\left(\SL_{n}(\Z_{p}) \twoheadrightarrow \SL_{n}(\Z/p^{k}\Z)\right)$. Note that we use this $\Gamma_{n,k}$ notation instead of the classical $K_{n}(k)$ notation to make the connection with the Galois context more apparent in the sequel.\\

\noindent Assume that we are given a group $G$. For any subsets $X, Y$ of $G$ and any positive integer $n$, we write $X^{n}$ (resp. $[X,Y]$) for the subgroup of $G$ generated by $\{x^n | x \in X\}$ (resp. by $\{[x,y] := x^{-1}y^{-1}xy, \ (x,y) \in X \times Y\}$). As usual, for any subgroup $H$ of $G$, we let $[G : H]$ denote the index of $H$ in $G$.\\

\noindent Also, if $\Gamma$ is a topological space and if $\Omega$ is a subset of $\Gamma$, then $\overline{\Omega}$ denotes the closure of $\Omega$ in $\Gamma$. If $\Gamma$ is moreover a topological group, a closed subgroup of $\Gamma$ is called \textit{topologically characteristic} when it is stable under all {\bf continuous} group automorphisms of $\Gamma$. \\

\section{Preliminaries on uniform pro-$p$ groups}\label{uniform groups}
\noindent Our main reference for this section is \cite{DMSS}. The goal here is to recollect all the information we need on uniform pro-$p$-groups to understand how they connect to Fontaine-Mazur conjecture in the view of Boston's methods. The reader who is already familiar with uniform pro-$p$-groups can hence skip this section and directly go to Section \ref{BostonProof}. 

\subsection{Some basic definitions related to profinite groups} \hfill \\
We start by recalling some basic definitions, coming from \cite[Chapter 1]{DMSS}, which are fundamental for our purpose. The first definition we make is exactly \cite[Definition 1.1]{DMSS}.
\begin{definition}
\label{profinite} 
A \textit{profinite group} is a compact Hausdorff topological group whose open subgroups form a basis of neighbourhoods of the identity.
\end{definition}
\noindent According to \cite[Proposition 1.3]{DMSS}, this is equivalent to the usual definition of profinite groups as inverse limits of finite groups.

\begin{definition}\label{fin gen}
A profinite group $G$ is \textit{finitely generated} if it contains a finite subset $X$ such that $X$ topologically generates $G$, i.e. such that $G$ is the smallest closed subgroup of $G$ containing $X$.
\end{definition}

\noindent Given a profinite group, we can also define its Frattini subgroup, which is one of the key tools in the forthcoming study of uniform pro-$p$-groups, as follows (see \cite[Definition 1.8]{DMSS}).
\begin{definition}\label{Frattini subgroup}
Let $G$ be a profinite group. The {\it Frattini subgroup} is defined as $\Phi(G) := \bigcap M$, where the intersection runs over all maximal proper open subgroups of $G$.
\end{definition}
\subsection{Pro-$p$-groups: definition and basic properties} \hfill\\
Among all profinite groups, we will have a special interest in those coming as projective limits of $p$-groups: the so-called pro-$p$-groups, which we define now, following \cite[Definition 1.10]{DMSS}.

\begin{definition}\label{propgroup}
A \textit{pro-$p$ group} is a profinite group in which every open normal subgroup has index equal to a power of $p$.
\end{definition}

\noindent Note that \cite[Proposition 1.12]{DMSS} ensures that pro-$p$-groups are exactly groups that are isomorphic to inverse limits of $p$-groups. (Recall that a {\it $p$-group} is just a finite group of $p$-power order.)\\

\noindent A nice characterisation of finitely generated pro-$p$-groups comes from the topology of their Frattini subgroup, as proven in \cite[Proposition 1.14]{DMSS}.
\begin{proposition}\label{key fact 1}
A pro-$p$-group $G$ is finitely generated if, and only if, its Frattini subgroup $\Phi(G)$ is open in $G$.
\end{proposition}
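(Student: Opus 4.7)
The plan is to prove both directions after establishing one structural fact about pro-$p$ groups. Every maximal proper open subgroup $M$ of a pro-$p$-group $G$ is normal of index $p$: pick an open normal subgroup $N \subseteq M$ (open subgroups form a neighbourhood basis of the identity, and any open subgroup contains an open normal one by intersecting conjugates), note that $G/N$ is a finite $p$-group, and use the classical fact that maximal proper subgroups of finite $p$-groups are normal of index $p$. Consequently $G/M$ is cyclic of order $p$ for each such $M$, so $\Phi(G) \supseteq \overline{G^{p}[G,G]}$, and the quotient $G/\Phi(G)$ is naturally a topological $\F_{p}$-vector space.

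For the forward implication, I would suppose that $G$ is topologically generated by a finite set $X = \{x_{1}, \ldots, x_{d}\}$. The images of $X$ in $G/\Phi(G)$ topologically generate this elementary abelian pro-$p$ group. Their $\F_{p}$-linear span is a subgroup of size at most $p^{d}$, hence finite, hence closed; being dense as well, it is the whole of $G/\Phi(G)$. Therefore $[G:\Phi(G)] \leq p^{d}$ is finite. Since $\Phi(G)$ is an intersection of closed subgroups it is itself closed, and a closed subgroup of finite index in a compact group is open.

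For the converse, I would suppose $\Phi(G)$ is open, so $G/\Phi(G)$ is a finite discrete $p$-group. Pick a finite subset $X \subset G$ whose image generates $G/\Phi(G)$, and let $H$ denote the closed subgroup of $G$ topologically generated by $X$. Then $H \Phi(G) = G$ by construction, and the key claim to establish is that this forces $H = G$. Assuming $H$ were proper, I would use that $H = \bigcap_{N} HN$ as $N$ ranges over open normal subgroups of $G$ (a standard compactness fact in profinite groups); by properness, some $HN$ is itself proper. Then $HN/N$ lies in a maximal proper subgroup $M/N$ of index $p$ in the finite $p$-group $G/N$, and pulling $M/N$ back to $G$ gives a maximal proper open subgroup $M$ containing $H$. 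Since $M \supseteq \Phi(G)$ by definition of the Frattini subgroup, we would obtain $H\Phi(G) \subseteq M \subsetneq G$, a contradiction.

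The technical heart of the argument — and the step I expect to require the most care — is this non-generator property of $\Phi(G)$ in the pro-$p$ setting, namely that any proper closed subgroup of a pro-$p$-group lies in a maximal proper open subgroup. It is here that the pro-$p$ hypothesis is genuinely used (over the merely profinite one), through the existence of an index-$p$ subgroup in each non-trivial finite $p$-quotient $G/N$. Once this lemma is in place, both directions of the equivalence follow quickly.
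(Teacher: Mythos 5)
Your argument is correct and complete. The paper itself gives no proof of this proposition --- it simply quotes \cite[Proposition 1.14]{DMSS} --- and what you have written is essentially the standard argument found there: for the forward direction, the reduction to the elementary abelian quotient $G/\Phi(G)$ (using that every maximal proper open subgroup is normal of index $p$, so that $\Phi(G)$ is closed of finite index, hence open); for the converse, the non-generator property of the Frattini subgroup, obtained from the compactness identity $H = \bigcap_{N} HN$ over open normal $N$ together with the existence of index-$p$ maximal subgroups in the finite $p$-quotients $G/N$.
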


\noindent In the case of pro-$p$-groups, we can interestingly go beyond Frattini subgroups via the notion of lower $p$-series, defined as follows in \cite[Definition 1.15]{DMSS}.
\begin{definition}
\label{lower p-series}
The {\it lower $p$-series} of a pro-$p$-group $G$ is the series $(P_{i}(G))_{i \geq 1}$ of topologically characteristic subgroups of $G$ defined by $P_{1}(G) = G$ and: $\forall \ i \geq 1, \ P_{i+1}(G) := \overline{P_{i}(G)^{p}[P_{i}(G), G]}$.
\end{definition}

\noindent The next key proposition shows how the Frattini subgroup of a pro-$p$-group is actually encoded in the lower $p$-series \cite[Proposition 1.13]{DMSS}.
\begin{proposition}\label{key fact 2}
For any pro-$p$-group $G$, we have $\Phi(G) = P_{2}(G) = \overline{G^{p}[G,G]}$.
\end{proposition}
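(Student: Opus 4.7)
The plan is to observe first that the equality $P_2(G) = \overline{G^p[G,G]}$ is immediate from Definition \ref{lower p-series} with $i = 1$, since $P_1(G) = G$. So the genuine content of the statement is the identity $\Phi(G) = \overline{G^p[G,G]}$. Writing $N := \overline{G^p[G,G]}$, I would establish the two inclusions $N \subseteq \Phi(G)$ and $\Phi(G) \subseteq N$ separately.

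For $N \subseteq \Phi(G)$, the main step is to prove that every maximal proper open subgroup $M$ of $G$ is normal of index $p$. Any such $M$ has finite index in the compact group $G$, so its normal core $M_0 := \bigcap_{g \in G} g M g^{-1}$ is still an open normal subgroup of finite index. The quotient $G/M_0$ is then a finite $p$-group (as $G$ is pro-$p$), and $M/M_0$ is one of its maximal subgroups. Invoking the classical result that maximal subgroups of a finite $p$-group are normal of index $p$, one concludes that $M \triangleleft G$ with $[G : M] = p$, so $G/M$ is abelian of exponent $p$. Hence $G^p[G,G] \subseteq M$, and since open subgroups of topological groups are closed, $N \subseteq M$. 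Intersecting over all maximal proper open subgroups of $G$ yields $N \subseteq \Phi(G)$.

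For the reverse inclusion, I would pass to the quotient $G/N$, which is by construction an abelian pro-$p$ group of exponent $p$. Since maximal open subgroups of $G/N$ correspond bijectively (via pullback) with maximal proper open subgroups of $G$ containing $N$, it suffices to show that for every nonidentity element $\bar g \in G/N$ there exists a maximal open subgroup of $G/N$ not containing $\bar g$. By Definition \ref{profinite} I can choose an open normal subgroup $U$ of $G/N$ with $\bar g \notin U$. The finite quotient $(G/N)/U$ is then an abelian $p$-group of exponent $p$, that is, an $\F_p$-vector space, in which the image of $\bar g$ is nonzero and hence avoided by some hyperplane $V/U$. Lifting $V$ back to a subgroup of $G$ produces a maximal proper open subgroup of $G$ that contains $N$ but not $g$, so $g \notin \Phi(G)$. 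This gives $\Phi(G) \subseteq N$.

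The only non-formal ingredient is the structural fact that maximal open subgroups of a pro-$p$ group are automatically normal of index $p$, and this is the main (and rather mild) obstacle; it is reduced to the classical statement about finite $p$-groups by the normal core construction above. Once this is in hand, the remainder of the argument is routine: taking closures handles the topological subtleties coming from the definition of $N$, and the $\F_p$-vector space structure on the finite quotients of $G/N$ supplies enough hyperplanes to cut out every nonidentity element.
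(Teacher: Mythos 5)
Your proof is correct. The paper states Proposition \ref{key fact 2} without proof, simply citing \cite[Proposition 1.13]{DMSS}; your argument --- reducing to the fact that every maximal proper open subgroup of a pro-$p$-group is normal of index $p$ via the normal core and the classical statement for finite $p$-groups, and then, for the reverse inclusion, cutting out each nonidentity element of the elementary abelian quotient $G/\overline{G^{p}[G,G]}$ by a hyperplane in a finite quotient --- is exactly the standard proof of that cited result, so there is nothing in the paper to diverge from.
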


\noindent If $G$ is a finitely generated pro-$p$-group, then the quotient $G/\Phi(G)$ is a finite-dimensional $\F_{p}$-vector space (see \cite[Proposition 3.7]{DMSS} for further details on this statement). We can hence define the following integer.
\begin{definition}\label{dimension of fg pro-p group} 
For any finitely generated pro-$p$-group $G$, we let $d(G)$ be the dimension of $G/\Phi(G)$ as $\F_{p}$-vector space: 
\[ \displaystyle d(G) := \dim_{\F_p} G/\Phi(G) \ . \] 
\end{definition}
\noindent According to \cite[Proposition 3.7]{DMSS}, this integer is also equal to the minimal cardinality of a topological generating set for $G$. We will see later that, in good cases, $d(G)$ is also equal to the dimension of $G$ as a $p$-adic analytic group, provided that such a structure exists on $G$.

\subsection{The (uniform) power of pro-$p$-groups} \hfill\\
We still consider the case of (pro-)$p$-groups, and we introduce now the following notion of powerfulness (as in \cite[Definition 2.1]{DMSS}).

\begin{definition}\label{powerful}
We say that a $p$-group $G$ is {\it powerful} when one of the following holds:
\begin{itemize}
\item either $p$ is odd, and $G/G^{p}$ is an abelian group; 
\item or $p = 2$, and $G/G^{4}$ is an abelian group.
\end{itemize}
We say that a pro-$p$-group $G$ is {\it powerful} when the analoguous alternative, with $G^{p}$ and $G^{4}$ respectively replaced by their closure $\overline{G^{p}}$ or $\overline{G^{4}}$ in $G$, holds.
\end{definition}
\noindent According to \cite[Corollary 3.3]{DMSS}, powerful pro-$p$-groups can equivalently be defined as inverse limits of powerful $p$-groups in which all transition maps are surjective.\\

\noindent We are now ready to define the first of our key notions, namely \textit{uniform pro-$p$-groups}, as follows.\cite[Definition 4.1]{DMSS}.
\begin{definition}\label{uniform}
A \textit{uniformly powerful pro-$p$ group} $G$ (or: \textit{uniform pro-$p$-group}) is a powerful, finitely generated pro-$p$-group $G$ such that: $\forall \ i \geq 1, \ [P_i(G) \colon P_{i+1}(G)] = [G \colon P_2(G)]$.
\end{definition}

\noindent Note that the assumptions in Definition \ref{uniform} ensure that $P_{2}(G) = \Phi(G)$ is open in $G$. Another exciting feature of uniform pro-$p$-groups is that they are automatically endowed with a structure of $p$-adic analytic group. The reader interested in more information about the general setting of $p$-adic analytic groups should refer to Lazard's seminal paper \cite{L}, which remains to our knowledge the best reference so far on this topic. For now, we only need the following existence result \cite[Theorem 8.32]{DMSS}. 
\begin{theorem}\label{p-adic-structure}
A topological group has a structure of $p$-adic analytic group if, and only if, it contains an open subgroup which is a uniform pro-$p$-group.
\end{theorem}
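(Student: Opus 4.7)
The plan is to treat the two implications separately, as they rely on rather different machinery.

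For the \emph{if} direction, I would start from a uniform open subgroup $U \leq G$ and build an explicit analytic atlas on $U$. Setting $d = d(U)$ and fixing a minimal topological generating set $x_1, \ldots, x_d$ of $U$, a structural feature of uniform pro-$p$-groups ensures that every element of $U$ admits a unique expression of the form $x_1^{\lambda_1}\cdots x_d^{\lambda_d}$ with $(\lambda_1, \ldots, \lambda_d) \in \Z_p^d$, and that the induced bijection $U \simeq \Z_p^d$ is a homeomorphism. The main technical task is then to check that, in these ``coordinates of the second kind'', the multiplication and inversion maps are given by power series with coefficients in $\Z_p$ that converge on $\Z_p^d$. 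I would do this by appealing to the Baker--Campbell--Hausdorff formula, which converges on the $\Z_p$-Lie algebra associated to $U$ via the logarithm map, and then transferring the resulting analytic group law back to $U$ via the exponential. Once $U$ is endowed with a $p$-adic analytic group structure, I would promote it to all of $G$ using the open cover by cosets $gU$, which is legitimate precisely because $U$ is open in $G$.

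For the \emph{only if} direction, I would proceed in three steps. First, using that any $p$-adic analytic group is locally compact, I would extract a compact open subgroup $H \leq G$ and, inside $H$, produce a pro-$p$ open subgroup via a Sylow-type argument applied to a sufficiently fine finite quotient. Second, choosing a faithful analytic embedding of a small identity neighbourhood of $H$ into $\GL_n(\Q_p)$ and composing with the $p$-adic logarithm, I would identify an open subgroup of $H$ with a $\Z_p$-Lie sublattice $L \subset \gl_n(\Q_p)$, the group law pulling back to the BCH product. Third, I would exhibit a uniform subgroup by descending along the filtration $p^k L$: the claim to establish is that, for $k$ large enough, the image $\exp(p^k L)$ is closed under $p$-th powers and commutators in a way that is controlled by the Lie filtration, and is therefore both powerful and has all successive lower-$p$-series quotients of the same order $p^d$.

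The main obstacle will be this last quantitative control in the converse direction: verifying that deep enough terms in the Lie-theoretic filtration correspond to uniform subgroups of $G$ requires careful estimates on how BCH interacts with $p$-adic valuations, together with a delicate comparison between the Lie filtration $\{p^k L\}$ and the lower $p$-series of the group. This is essentially the technical heart of Lazard's theorem and is where most of the work sits; the \emph{if} direction, by contrast, is largely formal once one has invested in the BCH machinery.
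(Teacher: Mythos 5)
The paper does not actually prove this statement: it is quoted as \cite[Theorem 8.32]{DMSS} (ultimately Lazard's theorem \cite{L}), so there is no internal argument to measure your proposal against, only the standard one in \cite{DMSS}. Your \emph{if} direction matches that standard treatment closely: coordinates of the second kind give a homeomorphism $U \simeq \Z_p^d$, one checks that multiplication and inversion are given by power series convergent on $\Z_p^d$, and the analytic structure is transported to all of $G$ along the coset decomposition. Your \emph{only if} direction, however, departs from \cite{DMSS} in a way that hides a real theorem inside a ``choice'': the existence of a faithful analytic embedding of an identity neighbourhood of $H$ into $\GL_n(\Q_p)$ is not free. It requires Ado's theorem for the finite-dimensional $\Q_p$-Lie algebra of $G$ together with the $p$-adic inverse function theorem to integrate the Lie algebra embedding to an injective homomorphism on an open subgroup; \cite{DMSS} deliberately avoids linearisation altogether by showing that every $p$-adic analytic group contains an open \emph{standard} subgroup (one whose group law is an integral formal group law on $(p\Z_p)^d$, obtained by recentring at the identity and rescaling coordinates by a power of $p$) and that standard groups are uniform. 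Your route can be completed, but the embedding step must be justified, not assumed. A second, smaller caveat: both directions of your plan lean on Baker--Campbell--Hausdorff convergence on a $\Z_p$-Lie lattice, which requires the lattice to be powerful ($[L,L]\subseteq pL$, respectively $4L$ when $p=2$) and is genuinely delicate at $p=2$; your sketch ignores the prime $2$ entirely, whereas the definitions in the paper (Definition \ref{powerful}) are set up precisely to absorb this case. Neither point is a wrong turn, but both sit exactly in ``the technical heart'' you defer, so as written the proposal is a correct roadmap for Lazard's theorem rather than a proof of it.
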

\noindent In particular, this theorem ensures that uniform pro-$p$-groups themselves always have a structure of $p$-adic analytic group. According to \cite[Theorem 8.36]{DMSS}, their dimension as analytic group is then equal to the integer $d(G)$ given by Definition \ref{dimension of fg pro-p group}.

\subsection{A key example: uniform pro-$p$ groups and principal congruence subgroups} \hfill \\
\noindent The goal of this subsection is to prove the following result \cite[Theorem 5.2]{DMSS}, where the principal congruence subgroups $\Gamma_{n,k}$ are those we defined in Section \ref{notation}.
\begin{theorem}
\label{ThmMainExampleSLn}
Let $n \geq 2$ be an integer.
\begin{enumerate}
\item If $p$ is odd, then the first principal congruence subgroup $\Gamma_{n,1} \subset \SL_{n}(\Z_{p})$ is a uniform pro-$p$-group of dimension $n^{2} - 1$.
\item If $p = 2$, then the second principal congruence subgroup $\Gamma_{n,2} \subset \SL_{n}(\Z_{2})$ is a uniform pro-$p$-group of dimension $n^{2} - 1$.
\end{enumerate}
\end{theorem}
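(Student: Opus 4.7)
The strategy is to parametrise $\Gamma_{n,k}$ by the assignment $M \mapsto 1 + p^{k} M$, where $M$ ranges over the matrices in $M_{n}(\Z_{p})$ satisfying $\det(1 + p^{k}M) = 1$, and to read off all group-theoretic data from three explicit $p$-adic expansions. Writing $A = 1 + p^{k} X$ and $B = 1 + p^{k} Y$, the key identities are
\[ AB \equiv 1 + p^{k}(X+Y) \pmod{p^{2k}}, \qquad [A,B] \equiv 1 + p^{2k}(XY - YX) \pmod{p^{2k+1}}, \]
together with $A^{p} \equiv 1 + p^{k+1}X \pmod{p^{k+2}}$, the latter using the divisibility of $\binom{p}{i}$ by $p$ for $1 \leq i \leq p-1$. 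This last formula is exactly where the parity dichotomy enters: when $p = 2$ and $k = 1$, the quadratic term $\binom{2}{2}p^{2k}X^{2} = 4X^{2}$ has the same $2$-adic valuation as $p^{k+1}X = 4X$, so the leading-order identification is lost. Under the assumption of the theorem, these three identities immediately yield $[\Gamma_{n,k},\Gamma_{n,k}] \subseteq \Gamma_{n,2k}$ and $\Gamma_{n,k}^{p} \subseteq \Gamma_{n,k+1}$, and they identify each quotient $\Gamma_{n,k+j}/\Gamma_{n,k+j+1}$ with the $\F_{p}$-vector space $\mathfrak{sl}_{n}(\F_{p})$ via $1+p^{k+j}M \mapsto \overline{M}$; the trace-zero condition comes from $\det(1+p^{\ell}M) \equiv 1 + p^{\ell}\tr(M) \pmod{p^{\ell+1}}$.

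I would then prove by induction on $i \geq 1$ the identification $P_{i}(\Gamma_{n,k}) = \Gamma_{n,k+i-1}$. The base case is trivial, and the inclusion $P_{i+1}(\Gamma_{n,k}) \subseteq \Gamma_{n,k+i}$ follows directly from the commutator and power formulas applied to $P_{i}(\Gamma_{n,k}) = \Gamma_{n,k+i-1}$. For the reverse inclusion, the observation is that $(1+p^{k+i-1}X)^{p} \equiv 1 + p^{k+i}X \pmod{p^{k+i+1}}$, so the image of $\Gamma_{n,k+i-1}^{p}$ surjects onto $\Gamma_{n,k+i}/\Gamma_{n,k+i+1} \cong \mathfrak{sl}_{n}(\F_{p})$. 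A Hensel-type successive-approximation argument then upgrades this layer-by-layer surjectivity to density of $\Gamma_{n,k+i-1}^{p}$ in $\Gamma_{n,k+i}$, and taking closure completes the induction step.

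Once this identification of the lower $p$-series is in hand, the rest is bookkeeping. Finite generation is immediate since $\Gamma_{n,k}/\Phi(\Gamma_{n,k}) = \Gamma_{n,k}/\Gamma_{n,k+1}$ is a finite-dimensional $\F_{p}$-vector space. Powerfulness follows from $[\Gamma_{n,1},\Gamma_{n,1}] \subseteq \Gamma_{n,2} = \overline{\Gamma_{n,1}^{p}}$ when $p$ is odd; for $p = 2$, iterating the squaring expansion gives $(1+4X)^{4} \equiv 1 + 16X \pmod{32}$, so $\overline{\Gamma_{n,2}^{4}} = \Gamma_{n,4}$, while $[\Gamma_{n,2},\Gamma_{n,2}] \subseteq \Gamma_{n,4}$, yielding abelianity of $\Gamma_{n,2}/\overline{\Gamma_{n,2}^{4}}$. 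The uniformity condition is then immediate: each index $[P_{i}(\Gamma_{n,k}) : P_{i+1}(\Gamma_{n,k})]$ equals $|\mathfrak{sl}_{n}(\F_{p})| = p^{n^{2}-1}$, independent of $i$, and the dimension as $p$-adic analytic group is $d(\Gamma_{n,k}) = n^{2}-1$ by the remark following Theorem~\ref{p-adic-structure}. The main obstacle in this plan is the reverse inclusion $\Gamma_{n,k+i} \subseteq \overline{\Gamma_{n,k+i-1}^{p}}$: all other facts reduce to formal manipulation of the three $p$-adic expansions, while this one requires the successive-approximation step; it is precisely the point at which one uses that the $p$-th power map shifts the $p$-adic valuation by exactly one, which fails for the pair $(p,k) = (2,1)$ and accounts for the case distinction in the statement.
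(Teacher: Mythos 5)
Your proposal is correct and follows essentially the same route as the paper: identify each layer $\Gamma_{m}/\Gamma_{m+1}$ with $\mathfrak{sl}_{n}(\F_{p})$ via $1+p^{m}M\mapsto \overline{M}$ (trace-zero from the determinant expansion), prove surjectivity of the $p$-th power map $\Gamma_{m}\to\Gamma_{m+1}$ by successive approximation, deduce $P_{i}(\Gamma_{1+\varepsilon})=\Gamma_{i+\varepsilon}$ by induction, and read off finite generation, powerfulness, uniformity and the dimension $n^{2}-1$. You correctly locate the obstruction at $(p,k)=(2,1)$, and your treatment of powerfulness for $p=2$ via $\Gamma_{n,2}/\overline{\Gamma_{n,2}^{4}}$ is in fact slightly more careful than the paper's.
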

\noindent For instance, we obtain for $n = 2$ that for $p \geq 3$, $\Gamma_{1} := \ker\left(\SL_{2}(\Z_{p}) \twoheadrightarrow \SL_{2}(\F_{p})\right)$ is a uniform pro-$p$-group of dimension $3$, while $\Gamma_{2} := \ker\left(\SL_{2}(\Z_{2}) \twoheadrightarrow \SL_{2}(\Z/4\Z)\right)$ is a pro-$2$-group of dimension $3$.

\noindent Also note that, for $n = 1$, $\Gamma_{1,m}$ is the trivial group for any integer $m \geq 1$, and any prime $p$, which explains why we assume $n \geq 2$ in the previous theorem. \\

\noindent Though the proof of this fact is essentially given in \cite[Theorem 5.2]{DMSS}, we collect here the relevant facts and definitions on which it is based to ease later reference. For convenience, once $n$ is fixed, we write $\Gamma_{m}$ instead of ${\Gamma}_{n,m}$ to avoid this cumbersome notation as often as possible. Otherly said, we now fix an integer $n \geq 2$ and a prime $p$ and we set:
\[ \forall \ m \geq 1, \ \Gamma_{m} := \ker(\SL_{n}(\Z_{p}) \to \SL_{n}(\Z/p^{m}\Z)) \ . \]

\begin{lemma}\label{pro-p group}
For every $m \geq 1$, the group $\Gamma_{m}$ is a pro-$p$ group. In fact, we have
\[ \displaystyle \Gamma_{m} \cong \varprojlim_{r \geq 1} \Gamma_{m}/\Gamma_{m+r},
\]
with each $\Gamma_{m}/\Gamma_{m + r}$ being a finite $p$-group. Moreover, $\Gamma_{m}/\Gamma_{m+1}$ is isomorphic to the additive group $\Sl_{n}(\F_{p})$ of matrices in $M_{n}(\F_{p})$ with trace 0.
\end{lemma}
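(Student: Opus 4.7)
The plan is to tackle the three assertions in the order: first establish the structural isomorphism $\Gamma_m/\Gamma_{m+1} \cong \Sl_n(\F_p)$, then use it to verify that every quotient $\Gamma_m/\Gamma_{m+r}$ is a finite $p$-group, and finally deduce the inverse limit description and the pro-$p$ property of $\Gamma_m$. The guiding idea throughout is to parametrise the elements of $\Gamma_m$ in the form $I + p^m A$ with $A \in M_n(\Z_p)$ and to reduce everything to first-order calculations with determinants and products.

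For the final structural isomorphism, I would define a map $\phi \colon \Gamma_m \to \Sl_n(\F_p)$ by $\phi(I + p^m A) = \bar A$, the reduction of $A$ modulo $p$. Expanding $\det(I + p^m A)$ as a polynomial in the entries of $p^m A$ yields $1 + p^m \tr(A) + O(p^{2m})$, so the condition $\det(I + p^m A) = 1$ forces $\tr(A) \in p^m \Z_p$, whence $\tr(\bar A) = 0$ as soon as $m \geq 1$; this shows $\phi$ lands in $\Sl_n(\F_p)$. The multiplication identity $(I + p^m A)(I + p^m B) = I + p^m(A + B + p^m AB)$ shows that $\phi$ is additive, since $p^m AB \equiv 0 \bmod p$ whenever $m \geq 1$, and its kernel is visibly $\Gamma_{m+1}$ because $\bar A = 0$ amounts to $A \in p M_n(\Z_p)$. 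Surjectivity then follows most economically from a cardinality comparison: the determinant exact sequence combined with the standard formula $|\SL_n(\Z/p^k\Z)| = p^{(k-1)(n^2-1)} |\SL_n(\F_p)|$ gives $|\Gamma_m/\Gamma_{m+1}| = p^{n^2-1} = |\Sl_n(\F_p)|$, so the induced injection is forced to be a bijection.

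Applying the same argument with $m$ replaced by any $m' \geq m$ gives $|\Gamma_{m'}/\Gamma_{m'+1}| = p^{n^2-1}$, so the finite filtration $\Gamma_m \supset \Gamma_{m+1} \supset \cdots \supset \Gamma_{m+r}$ has successive quotients of $p$-power order, and multiplying these orders shows that each $\Gamma_m/\Gamma_{m+r}$ is a finite $p$-group. The isomorphism $\Gamma_m \cong \varprojlim_{r \geq 1} \Gamma_m/\Gamma_{m+r}$ then comes from the identification $\SL_n(\Z_p) \cong \varprojlim_k \SL_n(\Z/p^k\Z)$ (since $\SL_n$ is defined by polynomial equations and hence commutes with inverse limits), intersected with the normal subgroup $\Gamma_m$. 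Combined with the fact that the $\Gamma_{m+r}$ form a basis of open neighbourhoods of the identity in $\Gamma_m$, this realises $\Gamma_m$ as an inverse limit of finite $p$-groups, hence as a pro-$p$ group in the sense of Definition \ref{propgroup}.

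Since the statement is a direct computation, I do not anticipate a deep obstacle; the most delicate point is the trace expansion of the determinant, and specifically making sure that the hypothesis $m \geq 1$ is invoked at the right moments, both to land in $\Sl_n(\F_p)$ and to obtain additivity of $\phi$. For $m = 0$, i.e.\ at the level of $\SL_n(\Z_p)$ itself, neither property would hold, which is precisely why the principal congruence subgroups need to be taken with index $m \geq 1$ in the first place.
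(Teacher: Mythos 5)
Your argument is correct and, for most of its length, follows the same backbone as the paper's proof: parametrise $\Gamma_m$ as $I_n+p^mA$, observe that $A\mapsto \bar A$ (the paper's $\overline{\varphi}_m$, with $\varphi_m(x)=p^{-m}(x-I_n)$) is an injective additive homomorphism with kernel $\Gamma_{m+1}$, extract the trace condition from $\det(I_n+p^mA)=1$, deduce that each $\Gamma_m/\Gamma_{m+r}$ is a finite $p$-group by stacking the successive quotients, and identify $\Gamma_m$ with $\varprojlim_r \Gamma_m/\Gamma_{m+r}$ using $\bigcap_r\Gamma_{m+r}=\{I_n\}$ and continuity of the determinant. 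The one step where you genuinely diverge is surjectivity onto $\Sl_n(\F_p)$: the paper exhibits explicit preimages of a basis (the elementary matrices $e_{ij}$, $i\neq j$, and the diagonal differences $d_i=e_{ii}-e_{nn}$), whereas you compare cardinalities via $\lvert\SL_n(\Z/p^k\Z)\rvert=p^{(k-1)(n^2-1)}\lvert\SL_n(\F_p)\rvert$. Your route is clean, but it has a hidden input: to equate $\lvert\Gamma_m/\Gamma_{m+1}\rvert$ with $\lvert\SL_n(\Z/p^{m+1}\Z)\rvert/\lvert\SL_n(\Z/p^m\Z)\rvert$ you need the reduction maps $\SL_n(\Z_p)\to\SL_n(\Z/p^k\Z)$ (equivalently $\SL_n(\Z/p^{k+1}\Z)\to\SL_n(\Z/p^k\Z)$) to be surjective, a Hensel-type lifting fact that should be stated and justified rather than absorbed into ``the standard formula.'' To be fair, the explicit-preimage route also needs a touch of care that the paper itself glosses over: $I_n+p^md_i$ has determinant $1-p^{2m}\neq 1$, so it does not lie in $\Gamma_m$; one must instead take, say, the diagonal matrix with entries $1+p^m$ in position $i$ and $(1+p^m)^{-1}$ in position $n$, whose image under $\overline{\varphi}_m$ is still $d_i$. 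Either fix is routine, so both approaches go through; yours trades an explicit construction for a counting argument at the cost of one extra standard citation.
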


\begin{proof}
Let $m$ be a positive integer. The obvious projection map $\displaystyle \Gamma_{m} \to \varprojlim_{r \geq 1} \Gamma_{m}/\Gamma_{m+1}$ is injective since we have $\displaystyle \bigcap_{r \geq 1} \Gamma_{m+r} = \{I_n\}$. Conversely, any compatible sequence in $\displaystyle \varprojlim_{r \geq 1} \Gamma_{m} /\Gamma_{m+r}$ provides an element of $M_{n}(\Z_p)$ that is congruent to $I_{n}$ modulo $p^{m}$. By continuity of the determinant function, this element must have determinant $1$, so we have the desired isomorphism.

We now show that each $\Gamma_{m}/\Gamma_{m+r}$ is a (finite) $p$-group.  By induction on $r$, it is enough to prove that $\Gamma_{m}/\Gamma_{m+1}$ is a finite $p$-group. To do this, we consider the map $\varphi_{m} : \Gamma_{m} \to M_{n}(\Z_{p})$ defined by $\varphi_{m}(x) := p^{-m}\left(x - I_{n}\right)$. One directly checks that $\varphi_{m}(\Gamma_{m+1})$ is contained in $M_{n}(p\Z_{p})$, so we have a well-defined function $\overline{\varphi}_{m} : \Gamma_{m}/\Gamma_{m+1} \to M_{n}(\Z_{p})/M_{n}(p\Z_{p}) \simeq M_{n}(\F_{p})$.

We claim that $\overline{\varphi_{m}}$ is a group homomorphism, for the usual additive group structure on $M_{n}(\F_{p})$. Indeed, let $I_{n} + p^{m}a$ and $I_{n} + p^{m}b$ be elements of $\Gamma_{m}$: as $(I_{n} + p^{m}a)(I_{n} + p^{m}b) = I_{n} + p^{m}(a+b) + p^{2m}ab$, we have that 
\[\displaystyle \varphi_{m}(I_{n} + p^{m}a)(I_{n} + p^{m}b)) = \varphi_{m}(I_{n} + p^{m}(a+b) + p^{2m}ab) = a+b + p^{m}ab \ . \]

\noindent Since $a + b + p^{m}ab \equiv a + b \bmod p$, we obtain as expected that 
\[ \displaystyle \overline{\varphi_{m}}\left(I_{n} + p^{m}a)(I_{n} + p^{m}b)\right) = a + b \bmod p = \overline{\varphi_{m}}\left(I_{n} + p^{m}a)\right) + \overline{\varphi_{m}}\left(I_{n} + p^{m}b)\right) \ . \]

\noindent Now assume that $[I_{n} + p^{m}a] \in \Gamma_{m}/\Gamma_{m+1}$ is in $\ker \overline{\varphi_{m}}$. This means that $a$ belongs to $M_{n}(p\Z_{p})$, which implies that $I_{n} +.p^{m}a = I_{n}$ belongs to $\Gamma_{m+1}$, hence that $[I_{n} + p^{m}a] \in \Gamma_{m}/\Gamma_{m+1}$ is trivial and that $\overline{\varphi}_{m}$ is injective. Since $M_{n}(\F_{p})$ is a finite-dimensional $\F_{p}$-vector space, we obtain that $\Gamma_{m}/\Gamma_{m+1} \simeq \mathrm{Im}\ \overline{\varphi_{m}}$ is a finite $p$-group.

\noindent To conclude, we are left to check that the image of $\overline{\varphi_{m}}$ is isomorphic to $\Sl_{n}(\F_{p})$. First, consider an element $I_{n} + p^{m}a$ of $\Gamma_{m}$ and let $f_{a}(X) = \displaystyle \sum_{k = 0}^{n} \alpha_{k}X^{k} \in \Z_{p}[X]$ be the characteristic polynomial of $a$. Then the characteristic polynomial of $-p^{m}a$ is equal to $\displaystyle f_{-p^{m}a}(X) = \sum_{k = 0}^{n} \alpha_{k}(-p^{m})^{n-k}X^{k}$. So we have in particular that $1 = \det(I_{n} + p^{m}a) = f_{-p^{m}a}(1) \ = \displaystyle \sum_{k = 0}^{n} \alpha_{n-k} (-p^{m})^{k}$. Reducing this equality mod $p^{m}$ shows that $\alpha_{0} - 1 \in p^{m}\Z_{p}$, hence subtracting $1$ to it and dividing the result by $p^{m}$ gives that 
\[\displaystyle - \alpha_{n-1 } + \sum_{k = 2}^{n} \alpha_{n-k}(-p^{m})^{k-1}  = 0 \ .\]
This proves in particular that $- \alpha_{n-1} = \tr(a)$ is in $p^{m}\Z_{p}$, hence that $\tr(a) \equiv 0 \bmod p$, which proves that $\overline{\varphi_{m}}\left(I_{n} + p^{m}a\right)$ lies in $\Sl_{n}(\F_{p})$ as required.\\
Conversely, let us check that the image of $\overline{\varphi_{m}}$ is equal to $\Sl_{n}(\F_{p})$. First recall that, since $\overline{\varphi_{m}}$ is injective, it is also a homomorphism of $\F_{p}$-vector spaces. We are hence left to prove that a basis of $\Sl_{n}(\F_{p})$ is contained in $\mathrm{Im} \ \overline{\varphi_{m}}$ to conclude.
For $1 \leq i, j \leq n$, let $e_{ij}$ be the matrix whose $(i,j)$-th entry is $1$ and all other entries are $0$.  For $1 \leq i \leq n-1$, set $d_i := e_{ii} - e_{nn}$.  Then an $\F_{p}$-basis for $\Sl_n(\F_p)$ is given by
\[\displaystyle \left\{e_{ij} \colon 1 \leq i \neq j \leq n\right\} \sqcup \left\{d_i \colon 1 \leq i \leq n-1\right\} \ . \]
As $\varphi_{m}(I_{n} + p^{m}e_{ij}) = e_{ij}$ for all $1 \leq i \neq j \leq n$ while $\varphi_{m}(I_{n} + p^{m}d_{i}) = d_{i}$ for all $1 \leq i \leq n-1$, we obtain that $\varphi_{m}(\Gamma_{m}/\Gamma_{m+1}) = \Sl_{n}(\F_{p})$, as claimed.
\end{proof}

Until the end of this section, we essentially follow \cite[Section 5.1]{DMSS}, making the necessary changes to pass from $\GL_{n}$ to $\SL_{n}$ and filling in some missing details. First note that, if $x \in \Gamma_{m}$ with $m \geq 1$, then $x^{p} \in \Gamma_{m+1}$. Indeed, we can write $x = I_{n} + p^{m}a$ for some $a \in M_{n}(\Z_{p})$, and as $a$ and $I_{n}$ commute, we have
\[
\displaystyle x^{p} = (I_{n} + p^{m}a)^{p} = I_{n} + \sum_{k = 1}^{p} {p \choose k}p^{mk}a^{m} \equiv I_{n} \bmod p^{m+1} \ ,
\] 
since $p$ divides ${p \choose k}$ for all $1 \leq k \leq p-1$. Following \cite[Lemma 5.1]{DMSS}, we will now see that, unless $p = 2$ and $m = 1$, the $p$-th power map $\Gamma_{m} \to \Gamma_{m+1}$ is actually surjective.

\begin{lemma}\label{p power surjective}
Let $p$ be a prime and $m$ be a positive integer.
If $p$ is odd, or if $p = 2$ and $m \geq 2$, then every element of $\Gamma_{m+1}$ is the $p$-th power of an element of $\Gamma_{m}$.
\end{lemma}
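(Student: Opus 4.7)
The plan is to extract a $p$-th root of $z \in \Gamma_{m+1}$ via the $p$-adic logarithm and exponential on matrices. Write $z = I_n + p^{m+1}b$ with $b \in M_n(\Z_p)$. The series
\[ \log(z) := \sum_{k \geq 1} (-1)^{k+1} \frac{(p^{m+1}b)^k}{k} \]
converges in $M_n(\Z_p)$: the $k$-th term has $p$-adic valuation at least $k(m+1) - v_p(k)$, and the inequality $(k-1)(m+1) \geq v_p(k)$ (which holds for all $k \geq 1$ and all $m \geq 0$, using the elementary bound $v_p(k) \leq \log_p k \leq k-1$) shows moreover that $\log(z) \in p^{m+1} M_n(\Z_p)$. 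Hence one may set $y := p^{-1} \log(z) \in p^m M_n(\Z_p)$.

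The candidate $p$-th root is $x := \exp(y) := \sum_{k \geq 0} y^k/k!$. This is precisely where the hypotheses of the lemma intervene. Using the classical estimate $v_p(k!) \leq (k-1)/(p-1)$, the general term has valuation at least $km - (k-1)/(p-1) = (k-1)(m - 1/(p-1)) + m$, which tends to $+\infty$ (and is nonnegative for all $k$) if and only if $m > 1/(p-1)$. This forces the split appearing in the statement: any $m \geq 1$ suffices when $p$ is odd, while $m \geq 2$ is needed when $p = 2$.

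It remains to verify that $x \in \Gamma_m$ and $x^p = z$. Since the first term of $\exp(y) - I_n$ is $y \in p^m M_n(\Z_p)$ and the same bound above shows the tail also lies in $p^m M_n(\Z_p)$, we obtain $x \in I_n + p^m M_n(\Z_p)$. For the determinant, the identities $\det \exp(y) = \exp(\tr y)$ and $\tr \log(z) = \log \det(z) = \log(1) = 0$ together yield $\det x = 1$, so $x \in \Gamma_m$. Finally, the functional identities $\exp(py) = \exp(y)^p$ and $\exp(\log z) = z$ (both valid in our range of convergence, the first because $y$ commutes with itself) give $x^p = \exp(py) = \exp(\log z) = z$.

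The main obstacle is the careful $p$-adic bookkeeping needed to guarantee convergence of $\exp$ and $\log$ on matrices with the right integrality, and the corresponding verification of the functional identities $\exp \circ \log = \mathrm{id}$ and $\det \circ \exp = \exp \circ \tr$ in this matrix setting. The sharp threshold $m > 1/(p-1)$ for convergence of $\exp$ is exactly what produces the case split between $p$ odd and $p = 2$; no non-commutativity subtleties arise since the whole argument takes place with powers of the single matrix $y$.
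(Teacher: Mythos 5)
Your argument is correct, but it takes a genuinely different route from the paper. You extract the $p$-th root in closed form via the matrix $\log$ and $\exp$: the estimates $v_p(k)\leq k-1$ and $v_p(k!)\leq (k-1)/(p-1)$ do give $\log(z)\in p^{m+1}M_n(\Z_p)$ and the convergence of $\exp$ on $p^mM_n(\Z_p)$ exactly when $m>1/(p-1)$, and the identities $\det\circ\exp=\exp\circ\tr$, $\tr\circ\log=\log\circ\det$ and $\exp(py)=\exp(y)^p$ (the last one unproblematic since only powers of the single matrix $y$ occur) settle both $\det x=1$ and $x^p=z$. The paper instead runs a successive-approximation (Newton/Hensel-type) scheme: it sets $x_1:=a$ where $z=I_n+p^{m+1}a$, and iteratively corrects $x_{r+1}:=x_r-p^r z_r$ with $z_r:=(I_n+p^{m-1}x_r)^{-(p-1)}c_r$ chosen to kill the error modulo one more power of $p$, then passes to the limit; along the way it observes that the determinant condition comes for free from the congruences. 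Your approach buys conceptual transparency --- in particular it makes the threshold $m>1/(p-1)$, hence the case split between $p$ odd and $p=2$, appear as the exact radius of convergence of $\exp$ rather than as an artifact of binomial bookkeeping --- at the cost of having to justify the convergence and the functional equations of the matrix exponential and logarithm over $\Z_p$. The paper's iteration is more elementary and self-contained (only binomial expansions and congruences), and stays closer to the source it follows, namely \cite[Lemma 5.1]{DMSS}. Both proofs are complete; if you keep yours, do spell out why $\exp(\log z)=z$ is legitimate here (formal inverse power series plus absolute convergence of the double series in the stated range), as that is the one step you currently wave at.
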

\begin{proof}
Let $a \in M_{n}(\Z_{p})$ be such that $\det(I_{n} + p^{m+1}a) = 1$. We are looking for some $x \in M_{n}(\Z_{p})$ such that $\det(I_{n} + p^{m}x) = 1$ and $(I_{n} + p^{m}x)^{p} = I_{n} + p^{m+1}a$. We will find it by successive approximations, which means that we will produce a sequence $(x_{k})_{k \geq 0}$ of elements of $M_{n}(\Z_{p})$ such that:
\[\displaystyle \forall \ k \geq 0, \ \det(I_{n} + p^{m}x_{k}) \equiv 1 \bmod p^{m+1+k} \ \text{ and } \ (I_{n} + p^{m} x_{k})^{p}  \equiv I_{n} + p^{m}a \bmod p^{m+1+k} \ .\]

Then $\displaystyle x := \lim_{r \to \infty} x_r$ will satisfy the expected conditions. First note that the congruence condition $I_{n} + p^{m}a \equiv (I_{n} + p^{m-1}x_{r})^p \bmod p^{m+r}$ together with the equality $\det(I_{n} + p^{m}a) = 1$ already ensures that $\det(I_{n} + p^{m-1}x_{r})^p \equiv 1 \bmod p^{m+r}$. Since we know that $\det(I_{n} + p^{m-1}a) \in 1 + p\Z_{p}$, it follows that $\det(I_{n} + p^{m-1}x_{r}) \equiv 1 \bmod p^{m+r}$. This shows that we do not need to check whether the determinant congruence holds when constructing the sequence $(x_{r})_{r \geq 1}$ as it will automatically be true.

We start by constructing $x_{1}$.  Note that, so long as $m$ is in the supposed range, we have
\[ (I_{n} + p^{m-1}a)^p = I_{n} + p^{m}a + \sum_{k = 2}^p {p \choose k}(p^{m-1}a)^k \equiv I_{n} + p^{m}a \bmod p^{m+1} \ . \]
Thus we may set $x_{1} := a$. 

Now suppose that there exists some integer $r \geq 1$ for which we built some $x_{r} \in M_{n}(\Z_{p})$ satisfying $I_{n} + p^{m}a \equiv (I_{n} + p^{m-1}x_{r})^p \bmod p^{m+r}$. Then there exists some $c \in M_{n}(\Z_{p})$ such that
\[
(I_{n} + p^{m-1}x_{r})^{p} = I_{n} + p^{m}a + p^{m+r}c \ .
\]
Now note that we could expand the left-hand side of this equality as a $\Z$-linear combination of powers of $x_{r}$:  this shows that both $a$ and $c$ can be expressed as a $\Q_p$-linear combination of powers of $x_{r}$, which implies that all of $a, c$ and $x_{r}$ commute. Let us set
\[
z :=  (I_{n} + p^{m-1}x_{r})^{-(p-1)}c \text{ and } x_{r+1} := x_{r} - p^{r}z \ .
\]
\noindent Then $x_{r+1}$ satisfies the desired congruence, as we have:
\begin{align*}
(I_{n} + p^{m-1}x_{r+1})^{p} &= ((I_{n} + p^{m-1}x_{r}) - p^{m+r-1}z)^{p}\\
&= \sum_{k = 0}^p {p \choose k}(I_{n} + p^{m-1}x_{r})^{p-k}(-p^{m+r-1}z)^{k}\\
&= (I_{n} + p^{m-1}x_{r})^{p} + \sum_{k = 1}^p(-1)^{k}{p \choose k}(I_{n} + p^{m-1}x_{r})^{p-k}p^{(m+r-1)k}z^{k}\\
&= I_{n} + p^{m}a + p^{m+r}c + \sum_{k = 1}^p(-1)^{k}{p \choose k}(I_{n} + p^{m-1}x_{r})^{p-k}p^{(m+r-1)k}z^{k} \ ,
\end{align*}
which implies that
\begin{align*}
(I_{n} + p^{m-1}x_{r+1})^{p} &\equiv I_{n} + p^{m}a + p^{m+r}c - p(I_{n} + p^{m-1}x_{r})^{p-1}p^{m+r-1}z \bmod p^{m+r+1}\\
\text{ i.e. } (I_{n} + p^{m-1}x_{r+1})^{p} &\equiv I_{n} + p^{m}a \bmod p^{m+r +1}.
\end{align*}
As we have $x_{r+1} - x_{r} = -p^{r}z$ by construction, $\displaystyle x := \lim_{r \to \infty} x_r$ is well-defined, which ends the proof.
\end{proof}
The two next results come from \cite[Theorem 5.2]{DMSS}.
\begin{corollary}
\label{Pi of Gamma1} 
Let $p$ be a prime integer.
\begin{itemize}
\item If $p$ is odd, then $P_{i}(\Gamma_{1}) = \Gamma_{i}$ for all $i \geq 1$. 
\item If $p = 2$, then $P_{i}(\Gamma_{2}) = \Gamma_{i + 1}$ for all $i \geq 1$.
\end{itemize}
\end{corollary}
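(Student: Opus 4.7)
I will prove both statements simultaneously by induction on $i$, since they share the same structure; the only difference is an index shift coming from the fact that Lemma \ref{p power surjective} fails for $p=2$, $m=1$.

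\textbf{Setup.} Set $G := \Gamma_1$ if $p$ is odd and $G := \Gamma_2$ if $p=2$; write $m_0 := 1$ or $m_0 := 2$ accordingly, so that the claim becomes $P_i(G) = \Gamma_{i+m_0-1}$ for all $i \geq 1$. The base case $i=1$ is just the definition $P_1(G)=G$. For the inductive step, unfolding the definition of the lower $p$-series gives
\[
P_{i+1}(G) = \overline{P_i(G)^{p}\, [P_i(G),G]} = \overline{\Gamma_{i+m_0-1}^{\,p}\, [\Gamma_{i+m_0-1},\Gamma_{m_0}]}.
\]
I therefore need to prove two inclusions: that the right-hand side is contained in $\Gamma_{i+m_0}$, and conversely that $\Gamma_{i+m_0}$ is contained in it.

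\textbf{Upper inclusion.} For this direction the plan is to use two routine filtration estimates. First, the computation preceding Lemma \ref{p power surjective} already shows that $x \in \Gamma_m$ implies $x^p \in \Gamma_{m+1}$, so $\Gamma_{i+m_0-1}^{\,p} \subseteq \Gamma_{i+m_0}$. Second, for the commutator term I will do the direct matrix calculation: writing $x = I_n + p^j a$ and $y = I_n + p^k b$ one finds $xy - yx = p^{j+k}(ab-ba)$, which combined with $xy = yx + p^{j+k}(ab-ba)$ yields
\[
[x,y] = x^{-1}y^{-1}(xy) = I_n + p^{j+k}\,x^{-1}y^{-1}(ab-ba) \in \Gamma_{j+k}.
\]
Applying this with $j = i + m_0 - 1$ and $k = m_0 \geq 1$ gives $[\Gamma_{i+m_0-1},\Gamma_{m_0}] \subseteq \Gamma_{i+2m_0-1} \subseteq \Gamma_{i+m_0}$. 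Since $\Gamma_{i+m_0}$ is closed, it also contains the closure of the subgroup generated by these elements, which is $P_{i+1}(G)$.

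\textbf{Lower inclusion.} This is where Lemma \ref{p power surjective} does all the work and explains the index shift for $p=2$. In both cases the index $m := i + m_0 - 1$ satisfies $m \geq 1$ when $p$ is odd, and $m \geq 2$ when $p = 2$, precisely the hypotheses of Lemma \ref{p power surjective}. The lemma therefore tells me that every element of $\Gamma_{m+1} = \Gamma_{i+m_0}$ is the $p$-th power of an element of $\Gamma_m = P_i(G)$, which gives
\[
\Gamma_{i+m_0} \subseteq \Gamma_{i+m_0-1}^{\,p} \subseteq P_{i+1}(G).
\]

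\textbf{Obstacle.} There is no real hard step: the induction hypothesis identifies $P_i(G)$ with a congruence subgroup, the upper inclusion is a direct binomial-style computation, and the lower inclusion is a one-line application of Lemma \ref{p power surjective}. The only subtle point to flag is the reason we must start from $\Gamma_2$ (rather than $\Gamma_1$) when $p = 2$: Lemma \ref{p power surjective} explicitly excludes the case $(p,m)=(2,1)$, so squaring does not surject $\Gamma_1 \twoheadrightarrow \Gamma_2$, and without this surjectivity the lower inclusion of the induction step would fail at the first stage.
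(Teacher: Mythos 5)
Your proof is correct and follows essentially the same route as the paper: induction on $i$, the easy containment $\Gamma_m^p\subseteq\Gamma_{m+1}$ plus a direct matrix computation for the commutator term to get the upper inclusion, and Lemma \ref{p power surjective} (whose failure at $(p,m)=(2,1)$ accounts for the index shift) for the lower inclusion, finishing by noting $\Gamma_{i+m_0}$ is closed. Your commutator identity $[x,y]=I_n+p^{j+k}x^{-1}y^{-1}(ab-ba)$ is a cleaner and more general version of the congruence computation the paper carries out, but it is not a different argument.
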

\begin{proof}
Set $\varepsilon := 0$ if $p$ is odd (resp. $1$ if $p = 2$). We show by induction on $i \geq 1$ that $P_{i}(\Gamma_{1 + \varepsilon}) = \Gamma_{i+\varepsilon}$. When $i = 1$, this follows from the definition of $P_{1}$, so we can suppose that $P_{i}(\Gamma_{1 + \varepsilon}) = \Gamma_{i + \varepsilon}$ for some $i \geq 1$.  Note that Lemma \ref{p power surjective} then ensures that $P_{i}(\Gamma_{1+\varepsilon})^{p} = \Gamma_{i+\varepsilon}^{p} = \Gamma_{i+1+\varepsilon}$.

We claim that $[P_{i}(\Gamma_{1+\varepsilon}), \Gamma_{1+\varepsilon}] = [\Gamma_{i+\varepsilon}, \Gamma_{1+\varepsilon}] \subseteq \Gamma_{i+1+\varepsilon}$. Indeed, let $I_{n} + p^{i+\varepsilon} \in \Gamma_{i+\varepsilon}$ and $I_{n} + p^{1+\varepsilon}b$ be two elements of $\Gamma_{1+\varepsilon}$.  Note that we have
\[
(I_{n} + p^{i+\varepsilon}a)^{-1} = I_{n} + \sum_{k = 1}^\infty (-1)^{k}p^{(i+\varepsilon)k}a^{k} \equiv I_{n} - p^{i+\varepsilon}a \bmod p^{i+1+\varepsilon}.
\]
\noindent This hence implies that we have
\begin{align*}
[I_{n} + p^{i+\varepsilon}a, I_{n} + p^{i+\varepsilon}b] &\equiv (I_{n} + p^{i+\varepsilon}a + pb)(1 - p^{i+\varepsilon}a)(I_{n} + pb)^{-1} \bmod p^{i+1+\varepsilon}\\
&\equiv (I_{n} + pb)(I_{n} + pb)^{-1} \bmod p^{i+1+\varepsilon}\\
&\equiv I_{n} \bmod p^{i+1+\varepsilon} \ ,
\end{align*}
\noindent hence shows that $[P_{i}(\Gamma_{1+\epsilon}), \Gamma_{1+\epsilon}] \subseteq \Gamma_{i+1+\epsilon}$, and thus we have $P_{i}(\Gamma_{1+\varepsilon})[P_{i}(\Gamma_{1+\varepsilon}), \Gamma_{1+\varepsilon}] \subseteq \Gamma_{i+1+\varepsilon}$.  Since $P_{i}(\Gamma_{1+\varepsilon}) = \Gamma_{i+1+\varepsilon}$, we must have equality. As $\Gamma_{i+1+\varepsilon}$ is open (hence closed) in $\Gamma_{1+\varepsilon}$, it follows that $P_{i+1}(\Gamma_{1+\varepsilon}) = \Gamma_{i+1+\varepsilon}$, as expected.
\end{proof}

\begin{theorem}
\label{first preliminary exercise}
Let $p$ be a prime integer.
\begin{itemize}
\item If $p$ is odd, then $\Gamma_{1}$ is a uniform pro-$p$-group of dimension $n^{2} - 1$. 
\item If $p = 2$, then $\Gamma_{2}$ is a uniform pro-$2$-group of dimension $n^{2}-1$.
\end{itemize}
\end{theorem}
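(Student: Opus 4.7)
The plan is to verify the four conditions packaged in Definition \ref{uniform} for $G := \Gamma_{1+\varepsilon}$, where $\varepsilon = 0$ if $p$ is odd and $\varepsilon = 1$ if $p = 2$. By Lemma \ref{pro-p group}, $G$ is a pro-$p$-group; combined with Corollary \ref{Pi of Gamma1} and Proposition \ref{key fact 2}, this identifies $\Phi(G) = P_{2}(G) = \Gamma_{2+\varepsilon}$, which is open in $G$. Proposition \ref{key fact 1} then yields finite generation of $G$, so only the powerfulness and the uniformity of the indices of the lower $p$-series remain.

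The next (and main) step is to show that $G$ is powerful. The key ingredient is the commutator bound $[\Gamma_{i}, \Gamma_{j}] \subseteq \Gamma_{i+j}$ for all $i, j \geq 1$: writing $x = I_{n} + p^{i}a$ and $y = I_{n} + p^{j}b$ with $a, b \in M_{n}(\Z_{p})$, one has $xy - yx = p^{i+j}(ab - ba)$, and $[x,y] = (yx)^{-1}(xy) = I_{n} + (yx)^{-1} \cdot p^{i+j}(ab-ba)$ lies in $\Gamma_{i+j}$. Applied with $i = j = 1 + \varepsilon$, this gives $[G, G] \subseteq \Gamma_{2(1+\varepsilon)}$. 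Next, I would use Lemma \ref{p power surjective} to identify the relevant power subgroup: $\Gamma_{m}^{p} = \Gamma_{m+1}$ whenever the lemma applies, so for $p$ odd one has $\overline{G^{p}} = \Gamma_{2}$, while for $p = 2$ iterating the lemma (every element of $\Gamma_{4}$ is the square of an element of $\Gamma_{3}$, itself the square of an element of $\Gamma_{2}$) yields $\overline{G^{4}} = \Gamma_{4}$. In both cases the commutator subgroup lies in the relevant power subgroup, so $G / \overline{G^{p}}$ (resp.\ $G / \overline{G^{4}}$) is abelian and $G$ is powerful.

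For the uniform index condition, Corollary \ref{Pi of Gamma1} identifies $P_{i}(G) = \Gamma_{i + \varepsilon}$, and Lemma \ref{pro-p group} (applied for each $m \geq 1$) gives $\Gamma_{m}/\Gamma_{m+1} \cong \Sl_{n}(\F_{p})$ additively, an $\F_{p}$-vector space of dimension $n^{2} - 1$. Hence $[P_{i}(G) : P_{i+1}(G)] = p^{n^{2} - 1}$ is independent of $i$ and in particular equals $[G : P_{2}(G)]$, confirming uniformity. The dimension statement then follows from $d(G) = \dim_{\F_{p}}(G/\Phi(G)) = \dim_{\F_{p}} \Sl_{n}(\F_{p}) = n^{2} - 1$, together with the remark following Theorem \ref{p-adic-structure} that identifies $d(G)$ with the $p$-adic analytic dimension.

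I expect the delicate point to be the $p = 2$ case of powerfulness, since Definition \ref{powerful} asks there for $G/\overline{G^{4}}$ (not $G/\overline{G^{2}}$) to be abelian. This forces one to descend two levels in the filtration simultaneously: one must verify both the identification $\overline{\Gamma_{2}^{4}} = \Gamma_{4}$ by iterating Lemma \ref{p power surjective}, and the sharper commutator estimate $[\Gamma_{2}, \Gamma_{2}] \subseteq \Gamma_{4}$. The commutator identity $[\Gamma_{i}, \Gamma_{j}] \subseteq \Gamma_{i+j}$ is precisely what makes the $p = 2$ case work, since with $i = j = 2$ it delivers landing in $\Gamma_{4}$ rather than merely in $\Gamma_{3}$.
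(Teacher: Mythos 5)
Your proposal is correct and follows the same skeleton as the paper's proof: finite generation via Proposition \ref{key fact 1}, Proposition \ref{key fact 2} and Corollary \ref{Pi of Gamma1}; the index condition via $P_{i}(\Gamma_{1+\varepsilon}) = \Gamma_{i+\varepsilon}$ and $\Gamma_{j}/\Gamma_{j+1} \cong \Sl_{n}(\F_{p})$; and the dimension count via $\Gamma_{1+\varepsilon}/\Phi(\Gamma_{1+\varepsilon}) \cong \Sl_{n}(\F_{p})$. The one place where you genuinely diverge is the verification of powerfulness, and there your version is the more careful one. The paper argues that $\Gamma_{1+\varepsilon}^{p} = \Gamma_{2+\varepsilon}$ (Lemma \ref{p power surjective}) and that $\Gamma_{1+\varepsilon}/\Gamma_{2+\varepsilon} \cong \Sl_{n}(\F_{p})$ is abelian, and concludes powerfulness in both cases. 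For odd $p$ this is exactly Definition \ref{powerful}; but for $p = 2$ the definition requires $G/\overline{G^{4}}$ to be abelian, and abelianness of the \emph{smaller} quotient $G/\overline{G^{2}} = \Gamma_{2}/\Gamma_{3}$ does not formally imply abelianness of the larger quotient $\Gamma_{2}/\Gamma_{4}$; the paper's proof silently elides this. Your commutator estimate $[\Gamma_{i},\Gamma_{j}] \subseteq \Gamma_{i+j}$, obtained from $xy - yx = p^{i+j}(ab-ba)$ and hence $[x,y] = I_{n} + (yx)^{-1}p^{i+j}(ab-ba)$, gives $[\Gamma_{2},\Gamma_{2}] \subseteq \Gamma_{4} = \overline{\Gamma_{2}^{4}}$ (the latter identification by iterating Lemma \ref{p power surjective} twice), which is precisely what the $p=2$ case needs. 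So your write-up both reproduces the paper's argument and repairs its only soft spot; the only cost is the extra half-page of matrix computation, which is routine.
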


\begin{proof}
As in the proof of Lemma \ref{p power surjective}, we will give a uniform proof for the two cases by setting $\varepsilon := 0$ if $p$ is odd (resp. $1$ if $p = 2$). According to Lemma \ref{pro-p group}, $\Gamma_{1+\varepsilon}$ is a pro-$p$ group. Propositions \ref{key fact 1} and \ref{key fact 2} hence ensure that $\Gamma_{1+\varepsilon}$ is finitely generated if, and only if, $P_{2}(\Gamma_{1+\varepsilon})$ is open in $\Gamma_{1+\varepsilon}$. But we know from Corollary \ref{Pi of Gamma1} that $P_{2}(\Gamma_{1+\varepsilon}) = \Gamma_{2+\varepsilon}$, which is open in $\Gamma_{1+\varepsilon}$, so the finite type condition is satisfied.

Now recall that Lemma \ref{p power surjective} ensures that $\Gamma_{1+\varepsilon}^{p} = \Gamma_{2+\varepsilon}$.  From Lemma \ref{pro-p group}, we get that $\Gamma_{1+\varepsilon}/\Gamma_{2+\varepsilon} \cong \Sl_{2}(\F_{p})$ is abelian, hence $\Gamma_{1+\varepsilon}$ is powerful.  Using similarly that Corollary \ref{Pi of Gamma1} gives that $P_{i}(\Gamma_{1+\varepsilon}) = \Gamma_{i+\varepsilon}$ while Lemma \ref{pro-p group} shows that $\Gamma_{j}/\Gamma_{j+1} \cong \Sl_{n}(\F_{p})$, we obtain that $P_{i}(\Gamma_{1+\varepsilon})$ also satisfies the necessary condition from Definition \ref{uniform} for all $i \geq 1$. All this finally shows that $\Gamma_{1+\epsilon}$ is uniform.

Finally, we calculate the dimension using Lemma \ref{pro-p group}, Corollary \ref{Pi of Gamma1} and Definition \ref{uniform} as follows: from these statements follows that we have $\Gamma_{1+\varepsilon}/\Phi(\Gamma_{1_\varepsilon}) = \Gamma_{1+\varepsilon}/P_{2}(\Gamma_{1+\varepsilon}) = \Gamma_{1+\varepsilon}/\Gamma_{2+\varepsilon} \cong \Sl_{n}(\F_{p})$.  Since $\displaystyle \dim_{\F_{p}} \Sl_{n}(\F_{p}) = n^{2}-1$, we are done.
\end{proof}

We end this section by a well-known result that will be used in Section \ref{UTFMC}. Its proof is as outlined in \cite[Page 1271]{M}.
Let $G$ be a finitely generated pro-$p$-group. For any $i \geq 1$, set $H^{i}(G) : = H^i(G, \F_p)$ and let $d_{p}(G) := \dim_{\F_p}H^{1}(G)$ be the $p$-rank of $G$.
\begin{theorem}
\label{Zp-quotient} 
Any uniform subgroup of dimension 1 or 2 has a quotient isomorphic to $\Z_p$.  
\end{theorem}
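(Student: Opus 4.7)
The plan is to reduce the statement to showing that the abelianization $G^{ab} := G/\overline{[G,G]}$ has positive $\Z_p$-rank. Indeed, any continuous surjection $G \twoheadrightarrow \Z_p$ factors through $G^{ab}$, which, being a finitely generated abelian pro-$p$-group (hence a finitely generated $\Z_p$-module), decomposes as $\Z_{p}^{r} \oplus T$ with $T$ finite by the structure theorem. A continuous quotient isomorphic to $\Z_p$ thus exists if and only if $r \geq 1$, so the task becomes to show $r \geq 1$.

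In the case $\dim G = 1$, we have $d(G) = 1$, so $G$ is topologically procyclic. A uniform pro-$p$-group of positive dimension is infinite, since Definition \ref{uniform} gives $[P_{i}(G) : P_{i+1}(G)] = p^{d(G)} > 1$ at every step, forcing the profinite order of $G$ to be $p^{\infty}$. An infinite procyclic pro-$p$-group must be isomorphic to $\Z_p$, so $G \cong \Z_p$ itself.

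In the case $\dim G = 2$, I would invoke a minimal presentation of $G$,
\[ 1 \to R \to F \to G \to 1, \]
where $F$ is a free pro-$p$-group of rank $d_{p}(G) = d(G) = 2$ and $R$ is the normal closure of $r_{p}(G) := \dim_{\F_{p}} H^{2}(G)$ relators. For a uniform pro-$p$-group of dimension $d$, Lazard's computation of the mod-$p$ cohomology gives $H^{*}(G) \simeq \bigwedge^{*}(\F_{p}^{d})$, so in dimension $d = 2$ we obtain $r_{p}(G) = \binom{2}{2} = 1$. Passing to abelianizations yields $G^{ab} \simeq \Z_{p}^{2}/\Z_{p}\bar{w}$, where $\bar{w}$ is the image in $F^{ab} \cong \Z_{p}^{2}$ of the unique relator. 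Any quotient of $\Z_{p}^{2}$ by a cyclic $\Z_{p}$-submodule has $\Z_{p}$-rank at least $2-1 = 1$, so $r \geq 1$ as required.

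The main obstacle is the input $r_{p}(G) = 1$ for uniform $G$ of dimension $2$, since this invokes the nontrivial fact that uniform pro-$p$-groups are Poincar\'e duality groups of dimension $d$ (equivalently, that their mod-$p$ cohomology is the exterior algebra on $d$ generators). A cleaner alternative that side-steps cohomology is to use Lazard's correspondence between uniform pro-$p$-groups and powerful $\Z_{p}$-Lie algebras: any $2$-dimensional Lie algebra over $\Q_{p}$ is either abelian or solvable with a $1$-dimensional derived subalgebra, so its abelianization has $\Q_{p}$-dimension at least $1$; under the correspondence this transfers directly to $G^{ab}$ having $\Z_{p}$-rank $\geq 1$, producing the desired $\Z_{p}$-quotient.
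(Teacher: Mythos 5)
Your proof is correct. The overall strategy is the same as the paper's: handle $d=1$ by noting $G\cong\Z_p$, and for $d=2$ reduce to showing $\mathrm{rk}_{\Z_p}(G^{\ab})\geq 1$, with the decisive input in both cases being Lazard's computation $H^{*}(G)\cong\bigwedge^{*}H^{1}(G)$ (Theorem \ref{Lazardadmis}), which gives $\dim_{\F_p}H^{1}=2$ and $\dim_{\F_p}H^{2}=1$. Where you diverge is in how you convert the inequality $\dim H^{1}-\dim H^{2}=1>0$ into a rank bound on $G^{\ab}$: the paper runs the long exact sequence attached to $0\to\F_p\to\Q_p/\Z_p\to\Q_p/\Z_p\to 0$ and reads off $\mathrm{rk}_{\Z_p}(G^{\ab})=d_p(K)-d_p(C)\geq \dim H^{1}-\dim H^{2}$, whereas you invoke the standard interpretation of $\dim_{\F_p}H^{1}$ and $\dim_{\F_p}H^{2}$ as generator and relation ranks of a minimal presentation $1\to R\to F\to G\to 1$, so that $G^{\ab}\cong\Z_p^{2}/\overline{\langle\bar w\rangle}$ visibly has free rank at least $1$. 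Both implementations prove the same general inequality $\mathrm{rk}_{\Z_p}(G^{\ab})\geq d_p(G)-r_p(G)$; yours is arguably more concrete but quietly imports the (nontrivial, though standard) fact that minimal presentations of pro-$p$ groups realize $\dim_{\F_p}H^{2}$ as the number of relators, while the paper's Bockstein-style argument is self-contained given the structure theorem for $G^{\ab}$. Your closing alternative via the Lazard correspondence with $\Z_p$-Lie algebras is a genuinely different and attractive route (a two-dimensional $\Q_p$-Lie algebra has derived subalgebra of dimension at most one), but as written it is only a sketch: you would need to justify that the $\Z_p$-rank of $G^{\ab}$ equals $\dim_{\Q_p}L/[L,L]$ for the associated Lie algebra $L$, which requires a little care about saturation of the derived subalgebra.
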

Proving this result requires the following result of Lazard \cite[V, Proposition (2.5.7.1)]{L}.
\begin{theorem} 
 \label{Lazardadmis}
Let $G$ be a uniform group of positive dimension $d$. Then, for all $i \geq 1$, one has 
\[ \displaystyle H^{i}(G) \cong \bigwedge^{i}(H^{1}(G)) \ , \] 
where the exterior product is induced by the cup product. 
\end{theorem}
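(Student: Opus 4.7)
The plan is to reduce the calculation of $H^{*}(G, \F_{p})$ to a Lie-algebraic computation via Lazard's correspondence, where the exterior algebra structure appears automatically because the Lie bracket becomes trivial modulo $p$ for uniform groups.

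First, Lazard's $p$-adic logarithm gives a bijection between the uniform pro-$p$-group $G$ of dimension $d$ and a $\Z_{p}$-Lie algebra $L_{G}$ that is free of rank $d$ as a $\Z_{p}$-module, under which the group law on $G$ translates into the Baker--Campbell--Hausdorff series on $L_{G}$. A central result of Lazard then identifies the continuous group cohomology $H^{*}(G, \F_{p})$ with the cohomology of the Chevalley--Eilenberg complex $\bigwedge^{*}(L_{G} \otimes_{\Z_{p}} \F_{p})^{*}$ equipped with the differential defined from the Lie bracket. This is the fundamental bridge between the group-theoretic and Lie-algebraic sides.

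Second, the uniformity of $G$ forces the Lie bracket on $L_{G} \otimes \F_{p}$ to vanish. Indeed, powerfulness gives $[G,G] \subseteq \overline{G^{p}}$ for $p$ odd (and the analogous statement with $\overline{G^{4}}$ for $p = 2$); translating through the BCH correspondence yields $[L_{G}, L_{G}] \subseteq p L_{G}$. Consequently the Chevalley--Eilenberg differential is identically zero, and $H^{i}(G, \F_{p})$ coincides as a graded $\F_{p}$-vector space with $\bigwedge^{i}(L_{G} \otimes \F_{p})^{*}$. Combined with the natural identification $H^{1}(G, \F_{p}) \cong (L_{G} \otimes \F_{p})^{*}$, which comes from $L_{G} \otimes \F_{p} \cong G/\Phi(G)$ and is the case $i = 1$ of the same Lazard isomorphism, this yields the vector-space isomorphism $H^{i}(G) \cong \bigwedge^{i}(H^{1}(G))$.

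Third, one must check that the algebra structure on $\bigwedge^{*} H^{1}(G)$ coincides with the one induced by the cup product on $H^{*}(G)$. Both products are graded-commutative, bilinear, and natural in $G$; on the Chevalley--Eilenberg model the multiplication is the exterior (shuffle) product, which under the Eilenberg--Zilber equivalence corresponds exactly to the cup product on the bar complex. Thus it suffices to verify the identification in degree one, where it is tautological.

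The main obstacle is the first step: establishing that the Chevalley--Eilenberg complex of $L_{G} \otimes \F_{p}$ genuinely computes $H^{*}(G, \F_{p})$. This is the substantive content of Lazard's work, and its proof passes through the theory of saturated $p$-valued filtered groups and a delicate spectral-sequence argument comparing the graded pieces of the lower $p$-series with the symmetric/exterior algebra of the associated graded Lie algebra. This machinery is the genuinely deep analytic input, and the remaining steps (vanishing of the bracket mod $p$, matching with cup product) are formal consequences once it is in hand.
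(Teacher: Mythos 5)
The paper does not actually prove this statement: it is quoted verbatim from Lazard \cite[V, Proposition (2.5.7.1)]{L} and used as a black box in the proof of Theorem \ref{Zp-quotient}. So there is no in-paper argument to compare against; the relevant question is whether your outline would stand on its own as a proof.

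It would not, and you have in fact put your finger on the reason yourself in your final paragraph. Your second step asserts that $H^{*}(G,\F_{p})$ is computed by the Chevalley--Eilenberg complex of $L_{G}\otimes\F_{p}$, and everything after that (vanishing of the bracket mod $p$ by powerfulness, matching of products) is indeed formal. But that second step is not a corollary of ``Lazard's central result'' in the form it is usually available: the Lazard comparison isomorphism between continuous group cohomology and Lie algebra cohomology is a statement about $\Q_{p}$-coefficients (or rational coefficients after inverting $p$), and it does not formally descend to $\F_{p}$-coefficients. Indeed, mod-$p$ Lie algebra cohomology of the reduction $L_{G}\otimes\F_{p}$ is the wrong invariant in general --- for a non-uniform example such as $\Z/p$ the Chevalley--Eilenberg cohomology of the (abelian, one-dimensional) mod-$p$ Lie algebra is an exterior algebra on one generator, whereas $H^{*}(\Z/p,\F_{p})$ is polynomial-times-exterior. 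The fact that for a \emph{uniform} (more precisely, $p$-saturated) group the mod-$p$ cohomology nevertheless agrees with the exterior algebra on $H^{1}$ is exactly the content of Lazard's V.2.5.7.1, proved via the saturated $p$-valuation and a filtration/spectral-sequence argument on the completed group algebra; it is not obtained by reducing the $\Q_{p}$-comparison modulo $p$. So your proposal reorganises the statement into ``formal consequences'' plus ``the hard comparison,'' but the hard comparison is the theorem being proved, and the citation you would need to close the gap is the very proposition the paper cites. The remaining ingredients of your sketch (the BCH correspondence $G\leftrightarrow L_{G}$, $[L_{G},L_{G}]\subseteq pL_{G}$ from powerfulness, $H^{1}(G,\F_{p})\cong(G/\Phi(G))^{*}$, compatibility of the exterior product with the cup product) are all correct and correctly deployed.
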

\begin{proof}[Proof of Theorem \ref{Zp-quotient}] Suppose that $G$ is a uniform pro-$p$-group of dimension $d \geq 1$. If $d =1$, then $G$ is clearly isomorphic to $\Z_{p}$ so we are done. If $\dim G = \dim_{\F_{p}}H^{1}(G, \F_{p}) = 2$, then Theorem \ref{Lazardadmis} implies that $\dim_{\F_{p}}H^{2}(G, \F_{p}) = \dim_{\F_p}\bigwedge^{2}H^{1}(G, \F_{p}))  = 1$, i.e. that $H^{2}(G, \F_{p})$ is actually isomorphic to $\F_{p}$. Now note that $H^{1}(G, \F_{p}) = H^{1}(G^{ab}, \F_{p})$. Since $G^{ab}$ is a finitely generated  $\Z_{p}$-module, we can write it as
\begin{equation}
\label{DecompoGab}
\displaystyle G^{ab} \simeq \Z_{p}^{r} \times \prod_{s=1}^{n} \Z/p^{i_{s}}\Z  
\end{equation}
for some nonnegative integers $r, n, i_{1}, \ldots, i_{n}$. Recall that the short exact sequence
\[ \displaystyle 1 \longrightarrow \F_{p} \longrightarrow \Q_{p}/\Z_{p} \stackrel{\times p }{\longrightarrow } \Q_{p}/\Z_{p} \longrightarrow 1 \]
leads to the following long exact sequence of cohomology: 
\[ 0 \longrightarrow H^{1}(G^{ab}, \F_{p}) \longrightarrow H^{1}(G^{ab}, \Q_{p}/\Z_{p}) \stackrel{\times p }{\longrightarrow} H^{1}(G^{ab}, \Q_{p}/\Z_{p}) \longrightarrow H^{2}(G, \F_{p}) \longrightarrow \ldots \] 
Let $K$ and $C$ respectively denote the kernel and the cokernel of $\times p$, and let $d_{p}(K)$ and $d_{p}(C)$ denote their respective dimensions as $\F_{p}$-vector spaces. From what we said above directly follows that
\[ \displaystyle d_{p}(C) \leq d_{p}(H^{2}(G, \F_{p}))  \  \text{ and }  \ d_{p}(K)  \geq d_{p}(H^{1}(G, \F_{p})) \ , \]
hence we have $d_{p}(K) - d_{p}(C) \geq  d_{p}(H^{1}(G, \F_{p})) - d_{p}(H^{2}(G, \F_{p})) = 1$. Since \eqref{DecompoGab} also ensures that $d_{p}(K) - d_{p}(C) = r = \mathrm{rk}_{\Z_{p}}G^{ab}$, we obtain that $\mathrm{rk}_{\Z_{p}}(G^{ab}) \geq 1$, hence $G^{ab}$ surjects onto $\Z_{p}$.  
\end{proof}


\section{Boston's proof of a special case of Fontaine-Mazur Conjecture}
\label{BostonProof}
In this section, we study the proof of Theorem \ref{Boston-main} as given by Boston in \cite{B1, B2}. It heavily relies on Lazard's extensive study of $p$-adic analytic Lie groups, as given in \cite{L}. Recall in particular that Lazard defined the notion of $p$-saturated groups and used it to give an algebraic characterisation of $p$-adic analytic groups, as topological groups containing a topologically finitely generated, open, $p$-saturated pro-$p$-group with an integer-valued filtration. In \cite{DMSS}, the notion of uniform pro-$p$-groups is used to transpose Lazard's work in a group-theoretic maneer and to obtain a analogue characterization of $p$-adic analytic groups for odd $p$,where uniform pro-$p$-groups play the same role as $p$-saturated pro-$p$-groups with integer-valued filtration do in Lazard's statement \cite[Page 81]{DMSS}. Note that in \cite{B1}, Boston used Lazard's terminology ({\it $p$-saturated with integer values}) instead of the {\it uniform group} one. Let us recall here the main statement of \cite{B1} (namely Theorem 1), as originally stated.

\begin{theorem}
\label{RecallBostonMain} Let $K$ be a normal extension of prime degree $\ell \neq p$ of a number field $F$ such that $p \nmid h(F)$, the class number of $F$. Then there is no infinite everywhere unramified Galois pro-$p$ extension $L$ of $K$ such that $L$ is Galois over $F$ and $\Gal(L/K)$ is $p$-saturated with integer values. 
\end{theorem}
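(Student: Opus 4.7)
The plan is to argue by contradiction: assume such an $L$ exists and set $G := \Gal(L/K)$, which is by hypothesis uniform and infinite. Since $\gcd(\ell, p) = 1$, Schur--Zassenhaus applied to
\[
1 \longrightarrow G \longrightarrow \Gal(L/F) \longrightarrow \Gal(K/F) \longrightarrow 1
\]
splits it, so a generator $\sigma$ of $\Gal(K/F)$ lifts to an order-$\ell$ element of $\Gal(L/F)$ and acts on $G$ by conjugation. The whole strategy revolves around this action.

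The first and most delicate step is to show that $\sigma$ has no nonzero fixed vector on $V_{1} := G/\Phi(G)$. Such a vector would, by semisimplicity of the $\F_{p}[\langle\sigma\rangle]$-action (valid because $\ell$ is a unit modulo $p$), yield a nonzero $\sigma$-invariant linear functional $V_{1} \to \F_{p}$; its kernel pulls back to a $\sigma$-stable open normal subgroup $N \subset G$ of index $p$ on which $\sigma$ acts trivially on $G/N$. Then $M := L^{N}$ is a $\Z/p\Z$-extension of $K$ which is unramified (because $L/K$ is), and $M/F$ is Galois with abelian group $\Z/p\Z \times \Z/\ell\Z$. Let $M_{0} \subset M$ be the degree-$p$ subfield fixed by the lift of $\sigma$. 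A short ramification argument shows $M_{0}/F$ is everywhere unramified: at primes $v$ of $F$ unramified in $K/F$ this is immediate, and at a prime $v$ where $K/F$ has ramification index $\ell$, combining $e_{v}(M/F) = \ell$ (from $M/K$ unramified) with $[M:M_{0}] = \ell$, $[M_{0}:F] = p$, and $\gcd(\ell,p) = 1$ forces $e_{v}(M_{0}/F) = 1$. Thus $M_{0}/F$ is an unramified abelian $p$-extension of degree $p > 1$, violating $p \nmid h(F)$; hence $V_{1}^{\sigma} = 0$.

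I would then propagate this vanishing to every layer of the lower $p$-series. By uniformity, the $p$-th power map induces $\sigma$-equivariant isomorphisms $V_{i} := P_{i}(G)/P_{i+1}(G) \simeq V_{1}$ for every $i \geq 1$, so $V_{i}^{\sigma} = 0$. Taking $\sigma$-invariants in the short exact sequence $1 \to P_{i+1}(G) \to P_{i}(G) \to V_{i} \to 0$ yields $P_{i}(G)^{\sigma} = P_{i+1}(G)^{\sigma}$ for all $i$, so
\[
G^{\sigma} \;=\; \bigcap_{i \geq 1} P_{i}(G)^{\sigma} \;\subseteq\; \bigcap_{i \geq 1} P_{i}(G) \;=\; \{1\}.
\]
Combined with the triviality of $H^{1}(\langle\sigma\rangle, P_{i}(G))$ (non-abelian Schur--Zassenhaus, coprime orders), this delivers $(G/P_{i}(G))^{\sigma} = 1$ for every $i$.

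Finally, each $G/P_{i}(G)$ is a finite $p$-group admitting a fixed-point-free automorphism $\sigma$ of prime order $\ell \neq p$, so by Higman's theorem it is nilpotent of class bounded by a universal constant $h(\ell)$; taking the inverse limit, $G$ itself is nilpotent of class $\leq h(\ell)$. On the other hand, since $L/K$ is everywhere unramified, class field theory identifies $G^{\ab}$ with a quotient of the finite $p$-Sylow of the ideal class group of $K$, so $G^{\ab}$ is finite. A standard induction on nilpotency class (using that $\gamma_{c}(G)/\gamma_{c+1}(G)$ is a quotient of the $c$-fold tensor power of $G^{\ab}$) then forces the finitely generated nilpotent pro-$p$ group $G$ to be finite, contradicting the infinitude of $G$. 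The hard part, in my view, is the ramification/class-field-theory step giving $V_{1}^{\sigma} = 0$: translating a $\sigma$-fixed line in $V_{1}$ into a genuinely unramified abelian $p$-extension of $F$ of positive degree is what makes the hypothesis $p \nmid h(F)$ bite. Propagation to all $V_{i}$ via uniformity, together with the appeal to Higman's bound and the nilpotent-pro-$p$ fact, are then formal consequences.
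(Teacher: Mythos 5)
Your proof is correct and follows essentially the same strategy as the paper's (which establishes the more general Theorem \ref{B2-main} and specializes to prime degree): Schur--Zassenhaus splitting, then the dichotomy between a $\sigma$-fixed point in $G/\Phi(G)$ --- ruled out by manufacturing an unramified degree-$p$ extension of $F$, contradicting $p\nmid h(F)$ --- and fixed-point-freeness on every $G/P_i(G)$, ruled out by a Higman-type bound together with class field theory. Your appeal to Higman's theorem is precisely the paper's hypothesis $H(G,\ell)$, which holds automatically for prime $\ell$, and your explicit ramification-index computation for $M_0/F$ is a welcome clarification of a point the paper passes over quickly.
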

This result gives some evidence for Conjecture \ref{FMC3} in a special case. Using the strength of fixed point free automorphisms, Boston generalised this result to the case of cyclic extensions $L/K$ where $[L : K]$ is not necessarily a prime, see \cite[Theorem 1]{B2}. To do this, he introduced a class of `self-similar' groups, which contains the class of `uniform' groups, and showed that, under some condition $H(G,n)$ related to fixed-point free automorphisms (that is conjectured to always hold), Theorem \ref{RecallBostonMain} carries over for cyclic finite extensions of degree co-prime to $p$, with `uniform' replaced by `self-similar'. As it appeared above, stating this more general result requires to define some new notions.

\begin{definition} 
\label{selfsimilargroups} A pro-$p$-group $G$ is said {\it self-similar} when it has a filtration by open, characteristic subgroups $G = G_{1} \subseteq G_{2} \subseteq \cdots $ such that $G_{i}/G_{i+1}$ is abelian for all $i \geq 1$ and $\displaystyle \bigcap G_{i} = \{ 1\}$, together with a family of group isomorphisms 
\[ \displaystyle \phi_{i}: G_{i}/G_{i+1} \to G_{i+1}/ G_{i+2} \]
that commute with every continuous automorphism of $G$.
\end{definition}

\noindent Note that the commutativity of all quotients $G_{i}/G_{i+1}$ ensures that we only have to check the second condition for outer automorphisms of $G$. Also note that Definition \ref{selfsimilargroups} implies that self-similar groups are always infinite, since $\displaystyle \lim_{i \to \infty} |G/ G_{i}| = \lim_{i \to \infty}| G/G_{2}|^{i-1} = \infty$.

The next proposition justifies our interest in this notion.
\begin{proposition}
\label{uniformareselfsimilar}
Any uniform group is a self-similar group.
\end{proposition}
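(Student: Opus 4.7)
The plan is to take as filtration the lower $p$-series of $G$: set $G_i := P_i(G)$ as in Definition \ref{lower p-series}. The straightforward conditions of Definition \ref{selfsimilargroups} (allowing for the obviously intended reading $G = G_1 \supseteq G_2 \supseteq \cdots$) all come for free from material already gathered in Section \ref{uniform groups}. Each $P_i(G)$ is topologically characteristic by its very definition. The quotient $P_i(G)/P_{i+1}(G)$ is abelian because $P_{i+1}(G) = \overline{P_i(G)^{p}[P_i(G),G]}$ already contains $[P_i(G), P_i(G)]$. Openness of every $P_i(G)$ and triviality of $\bigcap_{i \geq 1} P_i(G)$ follow from the fact that for a uniform pro-$p$-group of dimension $d$ one has $[G : P_i(G)] = p^{d(i-1)}$ (a direct consequence of Definitions \ref{dimension of fg pro-p group} and \ref{uniform}), so that the $P_i(G)$ form a basis of neighbourhoods of the identity.

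The real content of the proposition is the construction of the isomorphisms $\phi_i : P_i(G)/P_{i+1}(G) \to P_{i+1}(G)/P_{i+2}(G)$ commuting with every continuous automorphism of $G$. The natural candidate is the $p$-th power map $\phi_i : x P_{i+1}(G) \longmapsto x^{p} P_{i+2}(G)$. That this is a well-defined group homomorphism relies on the congruence $(xy)^p \equiv x^p y^p \pmod{P_{i+2}(G)}$ for $x, y \in P_i(G)$, which in turn follows from the fact that $G$, being uniform, is powerful; this is a standard consequence of the Hall--Petresco-type commutator identities available in powerful pro-$p$-groups (see \cite[Chapter 3]{DMSS}). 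Surjectivity of $\phi_i$ amounts to the equality $P_{i+1}(G) = \overline{P_i(G)^p}$, another standard property of uniform groups proved in \cite[Chapter 4]{DMSS}. Injectivity is then automatic, because source and target of $\phi_i$ are both finite of the same cardinality $[G : P_2(G)]$, by the defining equality of a uniform group.

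The compatibility of $\phi_i$ with every continuous automorphism of $G$ is essentially built into its definition. If $\alpha \in \Aut(G)$ is continuous, then $\alpha$ preserves each $P_i(G)$ (these being topologically characteristic) and therefore induces automorphisms $\bar\alpha_i$ on the quotients $P_i(G)/P_{i+1}(G)$. Since any group homomorphism commutes with raising to the $p$-th power, one checks directly that $\phi_i \circ \bar\alpha_i = \bar\alpha_{i+1} \circ \phi_i$. The main obstacle, therefore, is not the compatibility with automorphisms but rather the claim that the $p$-th power map induces the isomorphisms $\phi_i$: this is really the characteristic feature of uniformity, and it relies on the structure theory of powerful pro-$p$-groups developed in \cite{DMSS}, which allows one to linearise the relationship between $p$-th powers and commutators in the successive quotients of the lower $p$-series.
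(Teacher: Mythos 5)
Your proof is correct and follows the same route as the paper: take the lower $p$-series as the filtration and let the $p$-th power map induce the isomorphisms $\phi_i$, which commute with every continuous automorphism because such automorphisms preserve each $P_i(G)$ and commute with $p$-th powers. The paper's own proof is a two-line sketch of exactly this construction; you have simply filled in the verifications it leaves implicit.
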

\begin{proof}
Let $G$ be a uniform pro-$p$-group. For any $i \geq 1$, set $G_{i} := P_{i}$, where $P_{i}$ is as in Definition \ref{lower p-series}. Then $(G_{i})_{i \geq 1}$ is a filtration as in Definition \ref{selfsimilargroups} and the map $[x \mapsto x^{p}]$ induces group isomorphisms $\varphi_{i} : G_{i-1}/G_{i} \to G_{i}/G_{i+1}$ that commute with any continuous automorphism of $G$, hence $G$ is indeed a self-similar group.
\end{proof}

We now make explicit the aforementioned condition $H(G,n)$ defined by Boston in \cite[Definition 2]{B2}.
\begin{definition}
\label{H(G, n)} 
Let $G$ be a pro-$p$ group and $n$ be a positive integer. We say that {\it $H(G, n)$ holds} when there is a function of $n$ that is an upper bound for the derived length of every finite quotient of $G$ that admits a fixed point free automorphism of order $n$. 
\end{definition}
We already mentioned above that $H(G,n)$ is conjectured to hold for any pro-$p$-group $G$ and any integer $n \geq 1$. So far, it is known to hold when:
\begin{itemize}
\item $n$ is either a prime integer or $n = 4$, and $G$ arbitrary;
\item $G$ is such that the rank of all its finite quotients is bounded, and $n$ arbitrary.
\end{itemize}
The latter case covers the uniform groups case, see \cite[p. 52]{DMSS}, hence for $G$ being a uniform group, there is a (uniform) bound on the derived length of the quotients $G/P_{i}$ for $i \geq 1$.

We can now state the generalisation of Theorem \ref{RecallBostonMain} as proven in \cite[Theorem 1]{B2}.
\begin{theorem}[Boston]
\label{B2-main}
Let $F$ be a number field such that $p$ does not divide the class number of $F$. Let $K$ be a cyclic extension of $F$ of degree $n \geq 2$ co-prime to $p$. Then there is no infinite, everywhere unramified, Galois pro-$p$-extension $L$ of $K$ such that $L$ is Galois over $F$  and ${\rm Gal}(L/K)$ is self-similar and satisfies the property $H({\rm Gal}(L/K), n)$ as stated in Definition \ref{H(G, n)}. 
\end{theorem}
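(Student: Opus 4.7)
My plan is to suppose such an $L$ exists and derive a contradiction. Set $G := \Gal(L/K)$ and $\Gamma := \Gal(L/F)$. Since $G$ is pro-$p$ and $n := [K:F]$ is coprime to $p$, the short exact sequence $1 \to G \to \Gamma \to \Gal(K/F) \to 1$ splits by Schur--Zassenhaus; choose $\sigma \in \Gamma$ of order exactly $n$ lifting a generator of $\Gal(K/F)$, and let $\sigma$ act on $G$ by conjugation.

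The first step is to prove that $\sigma$ acts on $G^{ab}$ without nontrivial fixed points, mirroring Boston's argument for Theorem \ref{RecallBostonMain}. Since $\gcd(n,p) = 1$, a Maschke-type averaging yields a $\sigma$-equivariant decomposition $G^{ab} = (G^{ab})^{\sigma} \oplus (\sigma - 1)G^{ab}$ of $G^{ab}$ as a $\Z_{p}$-module. If $(G^{ab})^{\sigma} \neq 0$, then $A := G^{ab}/(\sigma - 1)G^{ab}$ is a nontrivial abelian pro-$p$ quotient of $G$ on which $\sigma$ acts trivially. The fixed field $M \subseteq L$ of the preimage of $(\sigma - 1)G^{ab}$ in $\Gamma$ is Galois over $F$ with $\Gal(M/F) \simeq A \times \langle \sigma \rangle$, so $F' := M^{\langle \sigma \rangle}$ satisfies $\Gal(F'/F) \simeq A$. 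Any inertia subgroup of $\Gal(L/F)$ injects into $\Gal(K/F)$, because $L/K$ is unramified, and so has order dividing $n$; being of order prime to $p$, it projects trivially into the pro-$p$ quotient $A$. Hence $F'/F$ is a nontrivial everywhere unramified pro-$p$ extension of $F$, contradicting $p \nmid h(F)$ by class field theory.

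Next, I propagate this fixed-point-freeness through the self-similar filtration $(G_{i})_{i \geq 1}$. The isomorphisms $\phi_{i} \colon G_{i}/G_{i+1} \to G_{i+1}/G_{i+2}$ commute with every continuous automorphism of $G$, in particular with $\sigma$, so every layer $G_{i}/G_{i+1}$ is $\sigma$-equivariantly isomorphic to $G_{1}/G_{2}$, which is itself a quotient of $G^{ab}$ and therefore carries a fixed-point-free $\sigma$-action by the previous step. A short induction along the filtration then shows that $\sigma$ acts fixed-point-freely on each finite quotient $G/G_{i}$: taking the largest $j$ such that a hypothetical nontrivial $\sigma$-fixed element of $G/G_{i}$ still lies in $G_{j}/G_{i}$ would produce a nontrivial $\sigma$-fixed element of $G_{j}/G_{j+1}$, a contradiction.

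Finally, each $G/G_{i}$ is a finite $p$-group admitting a fixed-point-free automorphism of order $n$, so the hypothesis $H(G,n)$ provides a uniform bound $b = b(n)$ on their derived lengths; taking the inverse limit yields $G^{(b)} = 1$, so $G$ is prosolvable of derived length at most $b$. The main obstacle is to convert this solvability bound into the final contradiction. I plan to proceed by descending induction on the derived length: the last nontrivial term $H := G^{(b-1)}$ of the derived series is a closed, abelian, characteristic, $\sigma$-stable subgroup on which $\sigma$ still acts fixed-point-freely, and a class-field-theoretic analysis of the tower of finite intermediate unramified pro-$p$ subextensions of $L/L^{H}$, together with the finiteness of $p$-class groups of number fields, should force $H$ to be finite. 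Applying the same reasoning to $G/H$ -- which is infinite, prosolvable of strictly smaller derived length, and still admits a fixed-point-free order-$n$ action -- then collapses the derived length step by step down to the abelian case, contradicting the infiniteness of $G$ guaranteed by self-similarity (see the remark following Definition \ref{selfsimilargroups}).
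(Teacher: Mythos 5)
Your proposal follows essentially the same route as the paper's proof: Schur--Zassenhaus to split off $\sigma$, Maschke plus the hypothesis $p \nmid h(F)$ to rule out nontrivial $\sigma$-fixed points on the first abelian layer, the $\sigma$-equivariance of the self-similar isomorphisms $\phi_{i}$ to propagate fixed-point-freeness to every finite quotient $G/G_{i}$, and then $H(G,n)$ together with the finiteness of class numbers to force $G$ to be finite, contradicting the infiniteness of self-similar groups. The paper runs the fixed-point step contrapositively (a minimal $i$ with a nontrivial fixed point in $G/P_{i}$ is pushed down to $i = 2$ via $\phi_{i-1}^{-1}$, and the $i=2$ case is excluded by producing an unramified degree-$p$ extension of $F$), but this is the same argument. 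One correction to your final step: you cannot establish the finiteness of $H = G^{(b-1)}$ before that of $G/H$, since applying class field theory to $L^{H}$ presupposes that $L^{H}$ is a number field; run the induction upward instead --- $G/G^{(1)}$ is a quotient of the $p$-part of the class group of $K$, hence finite, so $L^{G^{(1)}}$ is a number field and $G^{(1)}/G^{(2)}$ is finite, and so on up the derived series --- which is exactly the ``repeated use of the finiteness of class number'' the paper invokes.
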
 
Before proving this theorem, we recall a classical result of Schur and Zassenhaus \cite[Chapter 4]{RZ}. 
\begin{theorem}[Schur-Zassenhaus] 
Let $1 \to \Gamma \to \mathcal{G} \to \mathcal{G}/\Gamma \to 1$ be a short exact sequence of profinite groups, with $\Gamma$ a finitely generated pro-$p$-group and $\mathcal{G}/\Gamma$ of finite order co-prime to $p$. Then $\mathcal{G}$ contains a subgroup $\Delta_{0}$ isomorphic to the quotient $\Delta := \mathcal{G}/\Gamma$, and $\Delta_{0}$ is unique up to conjugation in $\mathcal{G}$. In particular, $\mathcal{G}$ is isomorphic to a semi-direct product of $\Delta$ and $\Gamma$: $\mathcal{G} = \Gamma \rtimes \Delta_0 \cong \Gamma \rtimes \Delta$. 
\end{theorem}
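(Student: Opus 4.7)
The plan is to reduce the profinite statement to the classical finite Schur--Zassenhaus theorem via an inverse limit argument. Since $\Gamma$ is a finitely generated pro-$p$-group, Proposition \ref{key fact 1} ensures that $\Phi(\Gamma)$ is open in $\Gamma$, and more generally $\Gamma$ admits a basis of neighbourhoods of the identity consisting of open characteristic subgroups $(N_i)_{i \in I}$ (for instance the lower $p$-series $(P_i(\Gamma))_{i \geq 1}$). Because $\Gamma$ is normal in $\mathcal{G}$, every $N_i$ is stable under conjugation by elements of $\mathcal{G}$, hence normal in $\mathcal{G}$, and $\mathcal{G} = \varprojlim_{i} \mathcal{G}/N_i$. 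At each finite level one gets a short exact sequence
\[ 1 \to \Gamma/N_i \to \mathcal{G}/N_i \to \mathcal{G}/\Gamma \to 1 \]
in which $\Gamma/N_i$ is a $p$-group and $\mathcal{G}/\Gamma$ has order coprime to $p$, so classical finite Schur--Zassenhaus supplies a non-empty finite set $C_i$ of complements to $\Gamma/N_i$ in $\mathcal{G}/N_i$, each isomorphic to $\mathcal{G}/\Gamma$.

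Next I would check that the natural projection $\mathcal{G}/N_j \twoheadrightarrow \mathcal{G}/N_i$ (for $N_j \subseteq N_i$) sends any complement to a complement: the image has order dividing $|\mathcal{G}/\Gamma|$, hence coprime to $p$, so its intersection with $\Gamma/N_i$ is trivial, while surjectivity of the projection forces its product with $\Gamma/N_i$ to exhaust $\mathcal{G}/N_i$. This turns $(C_i)$ into an inverse system of non-empty finite sets; by the standard fact that such an inverse limit is non-empty, one selects a coherent family $(\Delta_i)_{i \in I}$ and defines $\Delta_0 := \varprojlim_{i} \Delta_i$, a closed subgroup of $\mathcal{G}$. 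The level-wise relations $\Delta_i \cap \Gamma/N_i = \{1\}$ and $\Delta_i \cdot \Gamma/N_i = \mathcal{G}/N_i$ pass to the limit to give $\Delta_0 \cap \Gamma = \{1\}$ and $\Delta_0 \cdot \Gamma = \mathcal{G}$, so that the continuous map $\Delta_0 \hookrightarrow \mathcal{G} \twoheadrightarrow \mathcal{G}/\Gamma$ is a continuous bijection between compact Hausdorff groups, hence a topological isomorphism; this delivers the desired semi-direct product decomposition $\mathcal{G} = \Gamma \rtimes \Delta_0$.

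For uniqueness up to conjugation, given two such complements $\Delta_0, \Delta_0' \subseteq \mathcal{G}$, the conjugacy part of finite Schur--Zassenhaus (which applies at every level since $\Gamma/N_i$ is a solvable $p$-group) ensures that
\[ T_i := \bigl\{ \bar g \in \mathcal{G}/N_i \,\bigm|\, \bar g \cdot (\Delta_0 N_i / N_i) \cdot \bar g^{-1} = \Delta_0' N_i / N_i \bigr\} \]
is non-empty. Each $T_i$ is a coset of the (finite) normalizer of $\Delta_0 N_i / N_i$ in $\mathcal{G}/N_i$, and the projections make $(T_i)$ into an inverse system of non-empty finite sets; any element of its non-empty inverse limit yields $g \in \mathcal{G}$ with $g \Delta_0 g^{-1} = \Delta_0'$. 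The main obstacle here is purely one of bookkeeping: one must verify the compatibility of the transition maps on the inverse systems $(C_i)$ and $(T_i)$ and confirm that the limiting subgroup is truly a topological complement, both of which rely crucially on the coprime-order hypothesis; the deeper ingredient, namely the classical finite Schur--Zassenhaus theorem (existence and, in the solvable case, conjugacy), is invoked as a black box.
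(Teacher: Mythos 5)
The paper does not prove this statement: it is recalled as a classical result with a citation to Ribes--Zalesskii, so there is no in-paper argument to compare against. Your proof is correct and is essentially the standard reduction to the finite Schur--Zassenhaus theorem found in that reference: filter $\Gamma$ by open subgroups normal in $\mathcal{G}$, apply finite existence and (solvable-case) conjugacy level by level, and pass to the limit using the non-emptiness of an inverse limit of non-empty finite sets, noting that $\bigcap_i \Delta_0' N_i = \Delta_0'$ because $\Delta_0'$ is closed. The only cosmetic remark is that you do not really need characteristic subgroups of $\Gamma$ or its finite generation at this point: since $\mathcal{G}/\Gamma$ is finite, $\Gamma$ is open in $\mathcal{G}$, and the open normal subgroups of $\mathcal{G}$ contained in $\Gamma$ already form the required cofinal system with $\Gamma/N$ a finite $p$-group.
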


\begin{proof}[Proof of Theorem \ref{B2-main}] To make things more confortable to the reader, we only deal here with uniform groups, but the general case of self-similar groups can be proven along the same lines, as done in \cite{B2}. Suppose by contradiction that there exists an infinite, everywhere unramified, Galois pro-$p$-extension $L$ of $K$ such that $L/F$ is Galois and $G := {\rm Gal}(L/K)$ is uniform and satisfies condition $H(G,n)$, where $n := [L:K]$. By Schur-Zassenhaus theorem, the following extension splits:
\[ \displaystyle 1 \longrightarrow G \to {\rm Gal}(L/F) \longrightarrow {\rm Gal}(K/F) \longrightarrow 1 \ . \]
Let us pick an element $\sigma$ of ${\rm Gal}(L/F)$ that lifts a generator of the cyclic group ${\rm Gal}(K/F)$ under the splitting above. As $G$ is a normal subgroup of ${\rm Gal}(L/F)$, $\sigma$ induces an action by conjugation on $G$. Now, since $G$ is a uniform group, we can consider its filtration by its characteristic subgroups $P_{i}$ (as given in Definition \ref{lower p-series}). Each of the $P_{i}$ is preserved under the action by conjugation of $\sigma$ (as it is a characteristic subgroup of $G$), hence we get a conjugation action of $\sigma$ on each of the quotients $G/P_{i}$.

Suppose that, for all $i \geq 1$, $\sigma$ has no non-trivial fixed point in $G/P_{i}$. As $G$ satisfies $H(G, n)$, there is a uniform bound on the derived length of each of the quotients $G/P_{i}$ as $i$ goes to $\infty$. Such a bound cannot exists since a repeated use of the finiteness of class number shows that the maximal unramified pro-$p$-extension of any fixed derived length must be a finite extension.

This shows that there exists a positive integer $i$ such that $\sigma$ does not act fixed-point free on $G/P_{i}$. Assume that $i$ is minimal for this property and let $\tau$ be a non-trivial fixed point of $\sigma$ in $G/P_{i}$: $\sigma\cdot \tau = \tau$. By minimality of $i$, $\tau$ must maps to the identity element in $G/P_{i-1}$, i.e. lands in $P_{i-2}/P_{i-1}$, since we have compatibility of the action of $\sigma$ with $ P_{i-1} \supseteq P_{i}$ and $G/P_{i} \twoheadrightarrow G/P_{i-1}$. Now recall that Proposition \ref{uniformareselfsimilar} ensures that the isomorphism $\phi_{i-1} : P_{i-2}/P_{i-1} \to P_{i-1} /P_{i+2}$ is $\sigma$-equivariant, hence $\varphi_{i-1}^{-1}(\tau)$ should define a non-trivial fixed point for $\sigma$ in $P_{i-2}$, which contradicts the minimality of $i$ if $i \geq 3$.

If $i = 2$, then our assumption is that the action by conjugation of $\sigma$ on $G/P_{2}$ has a non-trivial fixed point. By definition, we have $P_{2} = \overline{G^{p}[G, G]}$, hence $G/P_{2}$ is naturally an $\F_{p}$-vector space. Otherly said, it defines a representation over a field of characteristic $p$ of the group $\langle \sigma \rangle$ generated by $\sigma$. As this group is, by definition of $\sigma$, of cardinality $\mid \langle \sigma \rangle \mid = \mid \mathrm{Gal}(K/F) \mid = n$, which is prime to $p$, we can apply Maschke's theorem to decompose $G/P_{2}$ as a direct sum of irreducible representations of $\langle \sigma \rangle$ over $\F_{p}$. Since $\tau$ is fixed under the action of $\sigma$, this decomposition can be written as $G/P_{2} = \F_{p}\tau \oplus W$ for some representation $W$ of $\langle \sigma \rangle$ over $\F_{p}$.

The direct sum decomposition ensures that $W$ also defines a subgroup and a quotient of $G/P_{2}$, so we can define an extension $M$ of $K$ such that $\mathrm{Gal}(M/K) = \langle \tau \rangle$ by letting $M$ be the fixed field (in $L$) of $W$ over $K$. The field extension $M/K$ is then an abelian extension of prime degree $p$, and we have a tower of fields of the form $F \subset K \subset M$.

We now claim that there exists a cyclic extension of degree $p$ over $F$ that is contained in $M$. Indeed, recall that $\mathrm{Gal}(K/F) \simeq \langle \sigma \rangle$ acts on $\mathrm{Gal}(M/K)$ by conjugation. Since $K/F$ is assumed to be a normal extension, $\mathrm{Gal}(M/K)$ must be a normal subgroup of $\mathrm{Gal}(M/F)$. We can hence apply Schur-Zassenhaus theorem to write $\mathrm{Gal}(M/F)$ as a semi-direct product of the following form:
\begin{equation}
\label{SDProduct}
\displaystyle \mathrm{Gal}(M/F)\ \simeq \ \mathrm{Gal}(M/K) \rtimes \mathrm{Gal}(K/F) \ .
\end{equation}
As $\tau$ is fixed under $\sigma$, the action of $\sigma$ on $\mathrm{Gal}(M/K) = \langle \tau \rangle$ is trivial, hence $\mathrm{Gal}(K/F)$ is also normal in $\mathrm{Gal}(M/F)$, which turns \eqref{SDProduct} into a direct product of the two groups appearing on the right hand side. As $\mathrm{Gal}(M/K)$ is of order $p$ while $\mathrm{Gal}(K/F)$ is of order $n$, with $n$ and $p$ co-prime, we can conclude that there exists a Galois extension $N$ of $F$ of degree $p$ (hence cyclic) in $M$.
\begin{center}
\begin{tikzpicture}[node distance = 2cm, auto]
      \node (F) {$F$};
      \node (K) [above of=F, left of=F] {$K$};
      \node (N) [above of=F, right of=F] {$N$};
      \node (M) [above of=F, node distance = 4cm] {$M$};
      \draw[-] (F) to node {$n$} (K);
      \draw[-] (F) to node [swap] {$p$} (N);
      \draw[-] (K) to node {} (M);
      \draw[-] (N) to node {} (M);
      \end{tikzpicture}
\end{center}  
Since $M/F$ is unramified, so is $N/F$, which is in contradiction with the fact that $p$ is supposed to be co-prime to $h(F)$. In all cases ($i \geq 3$ or $i = 2$), we have a contradiction, which proves that there exists no pro-$p$-extension $L/K$ that is everywhere unramified and has for Galois group a uniform group.
\end{proof}


\section{Some results on Tame Fontaine-Mazur conjecture and its uniform version}
\label{UTFMC} 
For this section, we will mostly refer to \cite{H, HM, RR}. The goal of this section is to present some results attached to the Tame Fontaine-Mazur Conjecture (TFMC for short) and the Tame Fontaine-Mazur Conjecture-Uniform version (TFMC-U for short), as well as related questions that some of the authors are currently working on.
 
\subsection{Motivation and background}
We start by recalling some results that are at the core of this section, namely the two versions of Fontaine-Mazur Conjectures mentioned above that will be proven below to be equivalent to each other.

As explained in Section \ref{intro}, TFMC is an analogue of Conjecture \ref{FMC3} when considering $p$-adic representations that are finitely and tamely ramified. Since a theorem of Grothendieck \cite[Appendix]{ST} ensures that tame representations are automatically potentially semi-stable, Fontaine-Mazur conjecture must imply (under some standard conjectures in algebraic geometry, see \cite{KW} for more details) that the following statement holds \cite[Conjecture 5a]{FM}.
\vspace{0.5\baselineskip}

Fix a prime $p$. For a number field $K$ and a (finite) set of places $S$ of $K$ all of which are co-prime to $p$, let $K_{S}$ be the maximal pro-$p$-extension of $K$ that is unramified outside of $S$.  Write $G_{S}:= \Gal(K_{S}/K)$.

\begin{conjecture}[Tame Fontaine-Mazur Conjecture] Let $p$ be a prime integer and $K$ be a number field. Let $S$ be a finite set of places of $K$ that are all co-prime to $p$ and let $K_{S}$ be the maximal pro-$p$-extension of $K$ that is unramified outside $S$. Set $G_{S} := \mathrm{Gal}(K_{S}/K)$. Then, for any positive integer $n$,  any continuous Galois representation $\rho: G_{S} \to GL_{n}(\Q_{p})$ has finite image.
\end{conjecture}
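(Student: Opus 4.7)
The plan is to reduce the Tame Fontaine-Mazur Conjecture to its uniform version (Conjecture \ref{TFMC-Uniform}) via the $p$-adic analytic machinery recalled in Section \ref{uniform groups}, and then attack the uniform version by adapting the group-theoretic strategy of Boston and Hajir-Maire. Fix $\rho : G_S \to \GL_n(\Q_p)$ continuous. Since $G_S$ is compact and $\rho$ is continuous, $\Gamma := \rho(G_S)$ is a closed subgroup of $\GL_n(\Q_p)$, hence a $p$-adic Lie group; since $G_S$ is pro-$p$, so is $\Gamma$. By Theorem \ref{p-adic-structure}, $\Gamma$ contains an open uniform pro-$p$ subgroup $U$, and I set $d := \dim U$. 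The goal is $d = 0$.

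Next I translate to Galois language. Let $L \subset K_S$ be the fixed field of $\ker \rho$, so $\Gal(L/K) \cong \Gamma$, and let $K'$ be the fixed field in $L$ of the preimage of $U$, so that $[K':K] < \infty$ and $\Gal(L/K') \cong U$. Writing $S'$ for the set of places of $K'$ lying over $S$, the extension $L/K'$ is Galois, pro-$p$, and unramified outside $S'$; as every place of $S$ is prime to $p$, the same holds for $S'$, so $L/K'$ is tamely ramified. Thus the conjecture is equivalent to asserting $U$ is finite in this situation. For $d = 1$ we have $U \cong \Z_p$, and after enlarging $K'$ by a suitable finite extension (for instance to make it totally complex and contain appropriate roots of unity), the existence of a $\Z_p$-extension tamely ramified only at $S'$ is ruled out by the finite generation of $S'$-units and standard class field theory. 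For $d = 2$, Theorem \ref{Zp-quotient} produces a $\Z_p$-quotient of $U$, reducing to the previous case. For $d \geq 3$ the statement is exactly Conjecture \ref{TFMC-Uniform}, and my remaining task is to establish it.

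To attack UTFMC, I would extend the proof of Theorem \ref{B2-main} as follows. First, enlarge $K'$ to a finite extension $K''$ containing a subfield $F$ such that $K''/F$ is cyclic of degree $\ell$ prime to $p$ with $L/F$ Galois, and such that every place of $S''$ (the places above $S$ in $K''$) splits completely in $K''/F$. By Schur-Zassenhaus, the sequence $1 \to U'' \to \Gal(L/F) \to \Gal(K''/F) \to 1$ splits, producing a conjugation action of a generator $\sigma$ of $\Gal(K''/F)$ on the uniform pro-$p$ group $U'' := \Gal(L/K'')$. Following Boston, if $\sigma$ has no non-trivial fixed point on $U''/P_i(U'')$ for all $i$, then property $H(U'',\ell)$ combined with the finiteness of the class number of every finite subextension of the tower forces a bound on the derived length, contradicting that $U''$ is an infinite uniform group. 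So some layer $i$ admits a non-trivial fixed point, which by the self-similar isomorphism $\phi_{i-1}$ (Proposition \ref{uniformareselfsimilar}) descends to $i = 2$, and there yields a cyclic-of-degree-$p$ extension of $F$ inside $L$ that is tamely ramified only at the places of $F$ below $S''$.

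The main obstacle, and what distinguishes this from the unramified case, is precisely this last step: I must show that such a cyclic extension cannot exist, even though tame ramification at $S''$ is now allowed. The idea is to use the hypothesis that $S''$ splits completely in $K''/F$: the local inertia contribution at each $v \in S''$ to $U''/P_2(U'')$ sits in an induced $\Gal(K''/F)$-module, whose $\sigma$-fixed part is therefore cut out by the $F$-orbits on $S''$ and is controlled by the tame local conductor and by the ray class group of $F$ of conductor the product of places below $S''$. Class field theory then reduces the existence of the offending cyclic $p$-extension of $F$ to a numerical condition on $p$, the class number of $F$, and the orders of the tame inertia groups $\mathbb{F}_{q_v}^\times$ at $v \in S''$; by choosing the initial $F$ so that $p \nmid h(F)$ and so that $p$ does not divide the $\mathbb{F}_{q_v}^\times$ for $v \in S''$ (achievable by discarding finitely many auxiliary choices and enlarging $S$ minimally if necessary), the condition fails and a contradiction is reached. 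The hardest and most delicate step is controlling the fixed-point set of $\sigma$ at higher layers $P_i/P_{i+1}$ simultaneously with the arithmetic constraints at tame primes; this is where the methods of \cite{HM} must be genuinely refined.
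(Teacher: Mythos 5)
You are attempting to prove a statement that the paper presents as an open conjecture: Conjecture \ref{TFMC} has no proof in the paper for $n>1$, and the authors state explicitly that it ``appears in general completely out of reach.'' What the paper actually establishes is (i) the case $n=1$ via class field theory (Theorem \ref{ThmTFMCn=1}), (ii) the equivalence of Conjecture \ref{TFMC} with its uniform version, Conjecture \ref{TFMC-Uniform}, and (iii) the dimensions $d=1,2$ of the uniform version. Your reduction steps reproduce exactly this known part of the story (compactness of $G_S$, passage to a uniform open subgroup $U$ via Theorem \ref{p-adic-structure}, the $\Z_p$-quotient argument from Theorem \ref{Zp-quotient} for $d\leq 2$), and that portion is essentially correct. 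But everything from ``To attack UTFMC'' onward is not a proof; it is a research program, and it breaks down at two identifiable points.

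First, you assume you can enlarge $K'$ to $K''$ admitting a subfield $F$ with $K''/F$ cyclic of degree prime to $p$ and $L/F$ Galois. In Boston's Theorem \ref{B2-main} the cyclic extension $K/F$ is a \emph{hypothesis}, not something manufactured: for a general $K'$ there is no reason such an $F$ exists with $L/F$ Galois, since $L$ is determined by $\rho$ and need not descend. The paper itself flags this obstruction in Section \ref{future plans}, noting that Boston's method cannot even be applied to the biquadratic field $\Q(\sqrt{-104},\sqrt{229})$ because the extension is not cyclic. Second, and more fundamentally, the endgame of Boston's argument uses that the resulting degree-$p$ cyclic extension of $F$ is \emph{everywhere unramified}, contradicting $p\nmid h(F)$. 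Once tame ramification at $S''$ is permitted, that contradiction evaporates: the Hajir--Maire theorem quoted in the paper shows that $G_S^T/\Phi(G_S^T)$ can carry $|S|$ independent $\sigma$-fixed points coming from inertia, so the fixed-point-free dichotomy on which the whole induction rests genuinely fails. Your proposed fix --- choosing $F$ with $p\nmid h(F)$ and $p\nmid |\F_{q_v}^{\times}|$ for $v\in S''$, ``enlarging $S$ minimally if necessary'' --- is not available: $S$ and the ramification of $\rho$ are given data, not parameters you control, and enlarging $S$ changes the group $G_S$ whose representations you are trying to constrain. You acknowledge this yourself in the final sentence; that acknowledged step is precisely the open problem, so the proposal does not close the argument.
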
 
In the next section (Section \ref{CFT-TFMC}), we will see that when $n =1$, TFMC follows by class field theory. For $n > 1$, this conjecture appears in general completely out of reach, though some preliminary evidence exists, as those given by \cite{B2, H, W}. When $K = \Q$ and $n =2$, we pointed out in Section \ref{intro} that the main contribution so far is due to Kisin by different methods \cite{K}.

Recall that Theorem \ref{p-adic-structure} claims that any finitely generated $p$-adic analytic group contains a uniform open subgroup. Moreover, Theorem \ref{Zp-quotient} asserts that any uniform group of dimension $1$ or $2$ admits a quotient isomorphic to $\Z_{p}$. We can hence rephrase TFMC as follows.

\begin{conjecture}[Tame Fontaine-Mazur Conjecture - Uniform version for $(K,d)$] Let $K$ be a number field and $\Gamma$ be a uniform pro-$p$-group of dimension $d > 2$ (hence infinite). Then there is no finitely and tamely ramified Galois extension of $K$ whose Galois group is isomorphic to $\Gamma$.
\end{conjecture}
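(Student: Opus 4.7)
The plan is to extend Boston's group-theoretic strategy from Theorem \ref{B2-main} to the tame setting by coupling it with the Lazard Lie algebra of a uniform pro-$p$-group. I argue by contradiction: assume that a Galois extension $L/K$ exists, ramified only at a finite set $S$ of tame places of $K$, with $\Gamma := \Gal(L/K)$ uniform of dimension $d > 2$. Then $\Gamma$ is a continuous quotient of $G_{S} = \Gal(K_{S}/K)$, and for every $v \in S$ the image in $\Gamma$ of the tame quotient of the local Galois group is generated by a Frobenius lift $\sigma_{v}$ and a tame inertia generator $\tau_{v}$ subject to the single local relation $\sigma_{v}\tau_{v}\sigma_{v}^{-1} = \tau_{v}^{q_{v}}$, where $q_{v}$ denotes the cardinality of the residue field at $v$.

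First, I would transport everything to the Lazard Lie algebra $L_{\Gamma} := \log(\Gamma)$, which is a free $\Z_{p}$-module of rank $d$ by Theorem \ref{p-adic-structure} together with the structure theorem for uniform pro-$p$-groups. Because $\log$ is functorial under continuous group automorphisms, the tame relation becomes the $\Z_{p}$-linear eigenvalue equation
\[
\Ad(\sigma_{v}) \cdot \log(\tau_{v}) = q_{v}\, \log(\tau_{v}),
\]
so each $\log(\tau_{v})$ lies in the $q_{v}$-eigenspace of $\Ad(\sigma_{v})$ acting on $L_{\Gamma} \otimes_{\Z_{p}} \Q_{p}$. Simultaneously, by Chebotarev, the unramified Frobenii at primes outside $S$ are topologically dense in $\Gamma$ and hence act on $L_{\Gamma}$ through operators preserving its Lie bracket.

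Next, I would compare two presentations of $\Gamma$. On the one hand, the Lazard cohomology computation of Theorem \ref{Lazardadmis} forces $d(\Gamma) = d$ and $\dim_{\F_{p}} H^{2}(\Gamma,\F_{p}) = \binom{d}{2}$; on the other hand, Koch--Shafarevich-type formulas express the generator and relator ranks of $G_{S}$ in terms of $|S|$, the $p$-rank of the $S$-class group of $K$, and local factors detecting when $q_{v} \equiv 1 \pmod p$. Since $\Gamma$ is a quotient of $G_{S}$, the two sets of numerical invariants must be compatible, which for $d > 2$ forces strong congruence conditions on the residue degrees at the primes of $S$. In the base case $d = 3$ the classification of uniform $\Z_{p}$-Lie algebras is tight enough that these congruences can be examined case by case; in general, I would iterate the Boston-style reduction by taking the minimal index $i$ at which an $\Ad(\sigma_{v})$-fixed line appears inside $\Gamma/P_{i}(\Gamma)$, applying the self-similar isomorphism $\phi_{i-1}$ from Proposition \ref{uniformareselfsimilar} to descend to a lower layer, and -- when $i = 2$ -- manufacturing an auxiliary unramified cyclic extension of degree $p$ over $K$ exactly as in the proof of Theorem \ref{B2-main}, thereby deriving an arithmetic contradiction from the behaviour of the $S$-class group.

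The hard part will be precisely the step that distinguishes this setting from Boston's: tame ramification introduces non-trivial fixed points for the conjugation action of $\sigma_{v}$ at exactly those places $v \in S$ where $q_{v} \equiv 1 \pmod{p}$, so Maschke's theorem can no longer be used to cleanly split off an isotypic summand as in the proof of Theorem \ref{B2-main}. Controlling these $q_{v}$-eigenspaces of $\Ad(\sigma_{v})$ inside $L_{\Gamma}$ globally --- reconciling the local Lie-algebraic constraint at each $v \in S$ with the density of unramified Frobenii and with the arithmetic of $K$ --- is where genuinely new input is required, and is the reason the statement remains conjectural for arbitrary $K$ and all $d > 2$.
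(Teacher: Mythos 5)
The statement you are trying to prove is a \emph{conjecture}: the paper does not prove it, and neither does anyone else. What the paper actually establishes in Section \ref{UTFMC} is (i) the equivalence of Conjecture \ref{TFMC} and Conjecture \ref{TFMC-Uniform}, (ii) the case $n=1$ of TFMC via class field theory (Theorem \ref{ThmTFMCn=1}), and (iii) the reduction of the cases $d=1$ and $d=2$ to that one-dimensional case via Theorem \ref{Zp-quotient} --- which is precisely why the conjecture is only stated for $d>2$. So there is no proof in the paper to compare yours against, and your text should not be presented inside a proof environment at all: it is a research programme, and you yourself concede in the final paragraph that ``genuinely new input is required'' and that ``the statement remains conjectural.'' A proof that ends by acknowledging it is not a proof has a gap by definition.

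Beyond that global point, the specific place where your outline breaks is structurally deeper than a missing Maschke argument. In Boston's Theorem \ref{B2-main}, the element $\sigma$ is a lift of a generator of $\Gal(K/F)$, of order $n$ prime to $p$, acting on $\Gamma=\Gal(L/K)$ by an \emph{outer} action; the whole engine is the dichotomy ``fixed-point-free action forces bounded derived length'' versus ``a fixed point produces an unramified degree-$p$ extension of $F$,'' and both horns need the auxiliary base field $F$ and the hypothesis $p\nmid h(F)$. In your tame setting there is no such $F$: the elements $\sigma_{v}$ and $\tau_{v}$ both live in (the image of the decomposition group in) $\Gamma$ itself, so $\Ad(\sigma_{v})$ is an \emph{inner} automorphism, which always has nontrivial fixed vectors on $L_{\Gamma}\otimes\Q_{p}$ when $d>1$ (its own logarithm, for instance). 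Hence neither horn of Boston's dichotomy is available: there is no fixed-point-free case to exploit, and a fixed point of an inner automorphism produces no auxiliary unramified cyclic extension over a smaller field. This is exactly the obstruction the paper attributes to Hajir--Maire, who only overcome it in special situations (quadratic $K/k$, $\sigma$-uniform quotients, carefully chosen sets $S$ and $T$). Your numerical comparison of $\dim_{\F_{p}}H^{2}(\Gamma,\F_{p})=\binom{d}{2}$ with Koch--Shafarevich bounds for $G_{S}$ is a reasonable heuristic, but it only constrains $|S|$ and the congruences $q_{v}\equiv 1 \pmod p$; it cannot rule out the existence of such an $S$, since $K$ and $S$ are arbitrary in the conjecture. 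Until you can say precisely what replaces the fixed-point-free dichotomy, the argument does not close, and the statement should remain labelled as a conjecture.
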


In the light of Conjectures \ref{TFMC} and \ref{TFMC-Uniform} for $n = 2$, we would like to underline the major contribution due to Hajir and Maire in \cite{HM} before going further. Among several results of importance, they proved an analogue of Theorem \ref{Boston-main} for odd $p$ by the mean of purely group-theoretic  and arithmetic methods such as the effect of a semi-simple cyclic action with fixed points on the group structure, the rigidity of uniform groups, the existence of Minkowski units, some arithmetic properties of Galois groups, etc.

To make this more precise, we need to introduce some notation and definitions, following \cite[1.2]{HM}. With the notation above, let $T$ be an auxiliary finite set of places of $K$ {such that $T \cap S = \emptyset$. Define $K^{T}_{S}$ as the maximal pro-$p$-extension of $K$ that is unramified outside $S$ and in which any place in $T$ splits completely, and let $G_{S}^{T} := \mathrm{Gal}(K^{T}_{S}/K)$ be the corresponding Galois group. Note that $K_{S}^{T}$ is a subfield of $K_{S}$ while $G^{T}_{S}$ is a quotient of $G_{S}$, and that $K_{S}^{\emptyset} = K_{S}$.

We can now state one of the main results of Hajir-Maire \cite[Section 1.2, Theorem]{HM}, which can be seen as a special case of more general statements proven in \cite[Section 2]{HM}.
\begin{theorem}[Hajir-Maire]
Let $K/k$ be a quadratic extension with Galois group $\langle \sigma \rangle$. Assume that the odd prime $p$ does not divide the class number of $k$. Suppose that for any finite set $\Sigma$ of places of $k$ all co-prime to $p$, there is no continuous Galois representation $G_{\Sigma}(k) \twoheadrightarrow \Gamma_{2,1}$. Then there exist infinitely many disjoint finite sets $S$ and $T$ of primes of $K$ such that any place of $S$ is prime to $p$, $|S|$ is arbitrarily large and:
\begin{enumerate}
\item  $G^{T}_{S}$ is finite;
\item $G^{T}_{S}/\Phi(G^{T}_{S})$ has $\vert S \vert$ independent fixed points under the action of $\sigma$;
\item there is no continuous representation $\rho: G^{T}_{S} \twoheadrightarrow \Gamma_{2,1}$ whose image is a uniform group.  
\end{enumerate}
\end{theorem}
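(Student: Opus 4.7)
The plan is to build the pairs $(S,T)$ by combining Chebotarev density, the Schur--Zassenhaus splitting recalled in Section~\ref{BostonProof}, and a cohomological descent argument controlled by the standing hypothesis on $k$. Because $|\langle\sigma\rangle| = 2$ is coprime to $p$, Schur--Zassenhaus gives $\Gal(K_S^T/k) \simeq G_S^T \rtimes \langle\sigma\rangle$, so $\sigma$ acts by conjugation on $G_S^T$ and on its Frattini quotient $V_S^T := G_S^T/\Phi(G_S^T)$. Since $p$ is odd, Maschke's theorem yields a $\sigma$-stable decomposition $V_S^T = V_S^{T,+} \oplus V_S^{T,-}$ into $\pm 1$-eigenspaces. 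Class field theory expresses $V_S^T$ in terms of $S$-units, the $p$-part of the relevant ray class group, and local contributions at primes of $S$; the assumption $p \nmid h(k)$ will be used throughout to eliminate any unramified contribution descending from~$k$.

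Next I would choose $S$ via Chebotarev: taking primes of $k$ that are inert in $K$ with norm $\equiv 1 \pmod{p}$ produces a $\sigma$-invariant set of primes of $K$ of arbitrarily large cardinality, each contributing exactly one dimension to $V_S^{T,+}$ through its tame inertia character. Once $S$ is fixed, I would select $T$ by a Koch/Shafarevich-type construction: a finite auxiliary set of primes whose Frobenius conditions trim the presentation of $G_S^T$ enough to force it to be finite, while preserving the $\sigma$-equivariant decomposition. A careful count of the surviving $\sigma$-fixed characters would then establish $\dim_{\F_p} V_S^{T,+} = |S|$, giving (1) and (2) simultaneously.

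For (3), I would argue by contradiction. Suppose $\rho : G_S^T \twoheadrightarrow \Gamma_{2,1}$ exists. Since $\Gamma_{2,1}$ is uniform of dimension three with Frattini quotient $\Sl_2(\F_p)$, the induced $\sigma$-equivariant map $\bar\rho : V_S^T \twoheadrightarrow \Sl_2(\F_p)$ has target of $\F_p$-dimension three, so for $|S|$ large a subspace of $V_S^{T,+}$ of dimension at least $|S|-3$ lies in $\ker\bar\rho$. The Galois subextension cut out by this subspace is $\sigma$-invariant, hence descends to~$k$. Using Proposition~\ref{uniformareselfsimilar} to propagate the descent through each graded piece $P_i(\Gamma_{2,1})/P_{i+1}(\Gamma_{2,1})$, and the fact that the self-similarity isomorphisms commute with the $\sigma$-action, one would bootstrap this linear-algebra observation into a genuine continuous surjection $G_\Sigma(k) \twoheadrightarrow \Gamma_{2,1}$, with $\Sigma$ the trace of $S$ to~$k$, contradicting the standing hypothesis.

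The main obstacle will be this last step. The descent on the Frattini quotient is a linear-algebra statement; promoting it to a full pro-$p$ surjection requires controlling the successive obstruction classes in $H^2(\langle\sigma\rangle, P_i(\Gamma_{2,1})/P_{i+1}(\Gamma_{2,1}))$ and verifying that the partial $\sigma$-equivariant lifts assemble coherently along the $p$-filtration. The rigidity of uniform pro-$p$ groups expressed by Proposition~\ref{uniformareselfsimilar} should be the key input making this inductive procedure effective, while $p \nmid h(k)$ prevents the tower from collapsing into an unramified pro-$p$ quotient of $G_k$ at any intermediate stage.
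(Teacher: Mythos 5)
The first thing to say is that the survey does not prove this theorem at all: it is quoted verbatim from Hajir--Maire \cite{HM} (their Section 1.2), with a remark that the third assertion is really a stronger statement about $\sigma$-uniform quotients. So there is no in-paper proof to match your proposal against, and your attempt has to be judged on its own internal coherence. Judged that way, it has a structural gap. You propose to establish (1) and (2) unconditionally, by a Chebotarev choice of $S$ and a ``Koch/Shafarevich-type'' choice of $T$ that ``trims the presentation of $G_S^T$ enough to force it to be finite,'' and only afterwards to invoke the hypothesis on $k$ (no surjection $G_\Sigma(k)\twoheadrightarrow\Gamma_{2,1}$) to get (3) by contradiction. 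But once (1) is known, (3) is vacuous: $\Gamma_{2,1}$ is an infinite group (it is uniform of dimension $3$, in particular torsion-free and nontrivial), so a finite $G_S^T$ admits no surjection onto it for trivial reasons, and your entire descent argument for (3) proves nothing new. The hypothesis on $k$ is not a decoration to be spent on (3); it is precisely what makes the finiteness in (1) provable. The whole difficulty of the theorem is that $G_S^T/\Phi(G_S^T)$ is forced to carry $|S|$ independent $\sigma$-fixed points, which is exactly the situation in which Boston-style fixed-point-free arguments break down and infinite analytic quotients cannot be excluded by elementary means; Hajir--Maire's logic is ``either $G_S^T$ is finite, or it surjects onto a ($\sigma$-)uniform group whose fixed subgroup descends to a $\Gamma_{2,1}$-quotient of some $G_\Sigma(k)$,'' and the standing hypothesis kills the second horn. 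No choice of $T$ alone delivers finiteness here, so your step (1) is unsupported as written.

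Two smaller points. First, the obstruction classes you worry about in $H^2(\langle\sigma\rangle, P_i/P_{i+1})$ all vanish automatically, since $\langle\sigma\rangle$ has order $2$ prime to $p$ and the coefficient modules are finite $p$-groups; the genuine difficulty in the descent is arithmetic (passing from a quotient of $G_S^T$ over $K$ to a quotient of $G_\Sigma$ over $k$, which requires the fixed-point structure theory of $\sigma$-uniform groups from \cite{HM}, not Proposition \ref{uniformareselfsimilar} alone). Second, the ingredients you name at the outset (Schur--Zassenhaus, Maschke for the prime-to-$p$ action, class field theory with $p\nmid h(k)$, Chebotarev selection of $S$) are indeed the right toolbox and match the methods the survey attributes to Hajir--Maire, so the opening of your plan is sound; it is the allocation of the main hypothesis, and hence the architecture of the proof, that needs to be inverted.
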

Note that in the last assertion, the uniformness assumption is actually strengthened into a notion of $\sigma$-uniformness that gives further condition on the action of $\sigma$ on $G^{T}_{S}$, see \cite[Definition 1.1]{HM}. Let us also mention that, though it is not so obvious at first sight, this result is actually tightly connected to Fontaine-Mazur Conjecture for uniform groups of constant type for order $2$ automorphisms. The interested reader can check \cite[Corollary 2.6]{HM} for more details on this topic.

\subsection{Class field theory implies Tame Fontaine-Mazur Conjecture for $n=1$}
\label{CFT-TFMC} \hfill \\
\noindent The goal of this section is to prove the tame Fontaine-Mazur Conjecture (Conjecture \ref{TFMC}) for $n=1$. This may be known to experts in the field, but we think this is important to have a reference where this claim is actually proven, hence we now give a full proof of the following statement.
\begin{theorem}[TFMC for $n = 1$] 
\label{ThmTFMCn=1}
Let $K$ be a number field and $p$ be a rational prime. Let $S$ be a finite set of place of $K$ in which no place has residue characteristic $p$. Let $K_{S}$ denote the maximal pro-$p$-extension of $K$ that is unramified outside $S$ and $G_{S} := \mathrm{Gal}(K_{S}/K)$ be its Galois group. Then every continuous group homomorphism 
\[\displaystyle \chi : G_{S} \to \GL_{1}(\Q_{p}) = \Q_{p}^{\times}\]
has finite image.
\end{theorem}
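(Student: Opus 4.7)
The plan is to use global class field theory to identify the image of $\chi$ with a quotient of a ray class group, and then exploit the assumption that $S$ contains no places of residue characteristic $p$ to show the quotient is finite. Since $G_{S}$ is pro-$p$ and $\chi$ is continuous, the image $\chi(G_{S})$ is contained in the maximal pro-$p$ subgroup of $\Q_{p}^{\times}$, which is $1+p\Z_{p}\cong\Z_{p}$ for $p$ odd and $\{\pm 1\}\cdot(1+4\Z_{2})$ for $p=2$. In particular this image is abelian, so $\chi$ factors through the maximal abelian pro-$p$ quotient $G_{S}^{\mathrm{ab}}$, and it suffices to prove that $G_{S}^{\mathrm{ab}}$ is finite.

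The fixed field of the kernel of $G_{S}\twoheadrightarrow G_{S}^{\mathrm{ab}}$ is the maximal abelian pro-$p$ extension $L$ of $K$ unramified outside $S$; equivalently, $L$ is the compositum of the $p$-parts of the ray class fields $K(\mathfrak{m})$ as $\mathfrak{m}$ ranges over moduli of $K$ with support contained in $S$. Global class field theory then yields
\[ \Gal(L/K)\;=\;\varprojlim_{\mathfrak{m}}\mathrm{Cl}_{\mathfrak{m}}(K)^{(p)}, \]
where $\mathrm{Cl}_{\mathfrak{m}}(K)^{(p)}$ denotes the $p$-Sylow subgroup of the finite ray class group $\mathrm{Cl}_{\mathfrak{m}}(K)$. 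So it is enough to show that this inverse system is eventually constant.

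To do this I would combine the classical exact sequence
\[ \mathcal{O}_{K}^{\times}\;\longrightarrow\;(\mathcal{O}_{K}/\mathfrak{m}_{f})^{\times}\times\{\pm 1\}^{r(\mathfrak{m}_{\infty})}\;\longrightarrow\;\mathrm{Cl}_{\mathfrak{m}}(K)\;\longrightarrow\;\mathrm{Cl}(K)\;\longrightarrow\;0 \]
with the local observation that, at each finite place $v\in S$ of residue characteristic $\ell\neq p$, the higher unit groups $1+\mathfrak{p}_{v}^{k}$ give a pro-$\ell$ filtration of $\mathcal{O}_{v}^{\times}$. Consequently the pro-$p$ part of $(\mathcal{O}_{v}/\mathfrak{p}_{v}^{n_{v}})^{\times}$ equals that of the tame quotient $k_{v}^{\times}$, independently of the exponent $n_{v}$. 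Taking the product over $v\in S$ uniformly bounds the pro-$p$ part of $(\mathcal{O}_{K}/\mathfrak{m}_{f})^{\times}$, and combining with the finiteness of $\mathrm{Cl}(K)^{(p)}$ and of the (at most $r_{1}$) contribution from real places forces the tower $\bigl(\mathrm{Cl}_{\mathfrak{m}}(K)^{(p)}\bigr)_{\mathfrak{m}}$ to stabilise. Hence $G_{S}^{\mathrm{ab}}$ is finite, and so is $\chi(G_{S})$.

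The main obstacle is exactly this stabilisation step, and it is fundamentally a local phenomenon: at a place $v$ of residue characteristic $\ell\neq p$, only the tame quotient $k_{v}^{\times}$ of $\mathcal{O}_{v}^{\times}$ can contribute to the pro-$p$ part since the principal units $1+\mathfrak{p}_{v}$ form a pro-$\ell$ group. Once this local fact is isolated, the rest is routine bookkeeping with ray class groups and the finiteness of $h(K)$.
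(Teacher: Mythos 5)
Your argument is correct, and it reaches the conclusion by a genuinely different route from the paper's. The paper works idelically: it composes $\chi$ with the Artin map $\A_{K}^{\times}/K^{\times} \to \Gal(\overline{K}/K)^{\ab}$ and checks finiteness of the image piece by piece --- the local unit groups $\mathcal{O}_{K_{v}}^{\times}$ for $v\in S$ (where, exactly as in your key local observation, the principal units form a pro-$\ell$ group with $\ell\neq p$ and hence map trivially to the pro-$p$ part of $\Z_{p}^{\times}$), the subgroup generated by uniformisers (finite image because the $h$-th power of any fractional ideal is principal and $\Q_{p}^{\times}$ has only finitely many elements of order dividing $h$), and the archimedean components (killed by total disconnectedness of $\Z_{p}^{\times}$). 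You instead prove the stronger and cleaner statement that $G_{S}^{\ab}$ itself is finite, via the ideal-theoretic formulation: the ray class group exact sequence, combined with the observation that the $p$-part of $(\mathcal{O}_{K}/\mathfrak{p}_{v}^{n})^{\times}$ equals that of the multiplicative group of the residue field independently of $n$, bounds the $p$-Sylow subgroups of the ray class groups uniformly over all moduli supported on $S$, so the tower stabilises. The two proofs share the same essential local input (only the tame quotient at $v\nmid p$ can contribute pro-$p$ ramification); yours buys the finiteness of $G_{S}^{\ab}$ outright, hence of every abelian quotient of $G_{S}$ at once, while the paper's idelic bookkeeping avoids the ray class group exact sequence and makes the role of the class number $h$ of $K$ explicit. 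One small point worth spelling out in your stabilisation step: for $\mathfrak{m}\mid\mathfrak{m}'$ the transition maps $\mathrm{Cl}_{\mathfrak{m}'}(K)^{(p)}\twoheadrightarrow \mathrm{Cl}_{\mathfrak{m}}(K)^{(p)}$ are surjective, which is what turns the uniform bound on orders into eventual constancy and hence finiteness of the inverse limit.
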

We keep the notation above throughout the rest of this section and prove (using class field theory) that $\chi$ must have finite image. First note that, since $G_{S}$ is a compact group (as it is a pro-$p$-group), is image under the continuous map $\chi$ must be a compact subgroup of $\Q_{p}^{\times}$, hence lands into $\Z_{p}^{\times}$. Also note that, since $\Q_{p}^\times$ is an abelian group, $\chi$ must factor through the abelianisation $G_{S}^{\ab}$ of $G_{S}$, which motivates the connection with class field theory. The latter provides indeed an injective group homomorphism with dense image (called Artin reciprocity law)
\[
\rho : \A_{K}^{\times}/K^{\times} \hookrightarrow \Gal(\overline{K}/K)^{\ab} \ ,
\]
that becomes an isomorphism when $\A_{K}^{\times}/K^{\times}$ is replaced by its profinite completion. We are hence reduced to show that the composite group homomorphism $\tilde{\chi} := \chi \circ \rho : \A_{K}^{\times}/K^{\times} \to \Z_{p}^{\times}$ has finite image.  

Saying that $\chi$ is unramified outside $S$ means that $\chi$ is trivial on inertia groups for places outside $S$, hence so is $\tilde{\chi}$. Now let $v$ be a place in $S$. We need to distinguish between the finite and archimedean cases as follows. 

If $v$ is a finite place, then the inertia group at $v$ corresponds to the subgroup of $\A_{K}^\times$ that is trivial at all places outside of $v$ and equal at $v$ to the subgroup $\OK_{K_{v}}^\times$ of $K_{v}^\times$. As all places in $S$ are co-prime to $p$, the group of $1$-units in $\OK_{K_{v}}^{\times}$ is a pro-$\ell$-group for some rational prime $\ell \not= p$. Since any continuous group homomorphism from a pro-$\ell$-group into a pro-$p$-group is necessarily trivial, knowing that the $1$-units of $\Z_{p}^{\times}$ is a pro-$p$-group ensures that the image by $\tilde{\chi}$ of $\OK_{K_{v}}^{\times}$ is finite. Letting $S^{\infty}$ denote the subset of finite places in $S$, we hence have proven that $\tilde{\chi}\left(\prod_{v \in S^{\infty}} \OK_{K_{v}}^{\times} \right)$ is finite.

We now study what happens on $\displaystyle \prod_{v {\text{ finite place}}} \varpi_{v}^{\Z}$, where $\varpi_{v}$ denotes a uniformising element of $\OK_{v}$. First note that the product is actually a restricted product, which means only finitely many places are such that $\varpi_{v}$ has non-trivial image in $\Z_{p}^{\times}$. Also recall that any element $x = (\varpi_v^{e_{v}})_{v \text{ finite }}$ of $\displaystyle \prod_{v {\text{ finite place}}} \varpi_{v}^{\Z}$ defines a fractional ideal $\Aa_{x}$ of $K$, namely $\displaystyle \prod_{v} \p_{v}^{e_{v}}$, where $\p_{v}$ denotes the prime ideal of $\OK_{K}$ corresponding to $v$. (This is well-defined as only finitely many $e_{v}$ can be non-zero.) If $h$ denotes the class number of $K$, then $\Aa_{x}^{h}$ is a non-zero principal ideal of $\OK_{K}$ by definition of $h$. This implies that $x \mod K^{\times}$ must have finite order dividing $h$. Thus we obtain that the quotient of $\displaystyle \prod_{v {\text{ finite place}}} \varpi_{v}^{\Z}$ by the diagonal copy of $K^{\times}$ (in $\A_{K}^{\times}/K^{\times}$) has finite exponent dividing $h$, hence its image by $\tilde{\chi}$ must satisfy the same condition in $\Q_{p}^{\times}$. As $\Q_{p}^{\times}$ only contains finitely many elements of order dividing $h$, we can conclude that this image by $\tilde{\chi}$ is finite, as expected.

Finally assume that $v$ is an archimedean place in $S$. As $\Z_{p}^{\times}$ is a totally disconnected space, the identity component of $K_{v}^{\times}$ (that is either $\R_{>0}$ if $v$ is real, or $\C^{\times}$ is $v$ is complex) has trivial image under the continuous group homomorphism $\tilde{\chi}$. Letting $S_{\infty}$ denote the set of archimedean places in $S$, we are hence left to see that $\displaystyle \prod_{v \in S_{\infty} \text{ real }} \R^{\times}/\R_{>0}$ has finite image under $\tilde{\chi}$, which is straightforward as $S$ and $\R^{\times}/\R_{>0} \simeq \Z/2\Z$ are finite sets, so the proof is complete.

\subsection{Equivalence of TFMC and TFMC-U}\hfill \\
This subsection aims to prove the equivalence of Conjectures \ref{TFMC} (TFMC) and \ref{TFMC-Uniform} (TFMC-U). First note that we have the following nice result coming from \cite[Theorem 4.5]{DMSS}, which makes unnecessary the assumption that $G$ is infinite in TFMC-U.
\begin{proposition}
\label{uniform implies torsion free}
Any uniform pro-$p$-group is torsion-free.
\end{proposition}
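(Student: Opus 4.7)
The plan is to reduce torsion-freeness to the injectivity of the $p$-th power map on $G$, and then to extract that injectivity from the rigid lower-$p$-series filtration built into the definition of a uniform pro-$p$-group.

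First, I would observe that any non-trivial torsion element of a pro-$p$-group has order a power of $p$: if $g \in G$ has finite order, then the finite cyclic group $\langle g \rangle$ embeds into some finite quotient $G/U$ of $G$ by a suitable open normal subgroup $U$, and $G/U$ is a finite $p$-group by Definition \ref{propgroup}. In particular, if $g \neq 1$ satisfies $g^{p^n} = 1$ with $n \geq 1$ minimal, then $h := g^{p^{n-1}}$ satisfies $h \neq 1$ and $h^p = 1 = 1^p$. It thus suffices to prove that the map $\phi : G \to G$, $x \mapsto x^p$, is injective.

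The key structural input is the following refinement of the lower $p$-series in the uniform case. Since $G$ is powerful, the bracket term in the definition $P_{i+1}(G) = \overline{P_i(G)^p [P_i(G), G]}$ is absorbed by the $p$-th-power term, so $P_{i+1}(G) = \overline{P_i(G)^p}$; moreover, in the powerful case this closure coincides with the set of actual $p$-th powers $\{x^p : x \in P_i(G)\}$. Combined with the uniform condition $[P_i(G) : P_{i+1}(G)] = p^{d(G)}$, a cardinality argument then forces the induced map
\[
\overline{\phi}_i : P_i(G)/P_{i+1}(G) \longrightarrow P_{i+1}(G)/P_{i+2}(G), \qquad [x] \mapsto [x^p],
\]
to be an isomorphism of elementary abelian $p$-groups of rank $d(G)$, for every $i \geq 1$. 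These are precisely the sort of facts established, in the $\SL_n$ case, by Lemma \ref{p power surjective} and Corollary \ref{Pi of Gamma1}.

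Granting this, the conclusion is immediate: if $g \in G \setminus \{1\}$ satisfies $g^p = 1$, then since $\bigcap_{i \geq 1} P_i(G) = \{1\}$ for a finitely generated pro-$p$-group, there exists a largest $i \geq 1$ with $g \in P_i(G)$, so $[g]$ is non-trivial in $P_i(G)/P_{i+1}(G)$. By injectivity of $\overline{\phi}_i$, the class $[g^p] \in P_{i+1}(G)/P_{i+2}(G)$ is then non-trivial, contradicting $g^p = 1$. The main obstacle of the plan is the bijectivity of the $\overline{\phi}_i$'s: this is not mere book-keeping but requires exploiting the commutator and $p$-th-power identities available in powerful pro-$p$-groups, combined with the constant-index condition specific to uniform groups; all necessary ingredients are developed in \cite[Chapter 4]{DMSS}, with the usual case split between $p$ odd and $p = 2$ matching the one visible in Definition \ref{uniform}.
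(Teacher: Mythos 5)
Your argument is correct, and it is essentially the same proof as the one the paper relies on: the paper does not prove this proposition itself but simply cites \cite[Theorem 4.5]{DMSS}, whose proof is precisely your reduction to elements of order $p$ followed by the observation that, for a uniform group, the $p$-th power map induces bijections $P_i(G)/P_{i+1}(G) \to P_{i+1}(G)/P_{i+2}(G)$ (surjectivity from powerfulness, injectivity from the constant-index condition), so a non-trivial $g \in P_i(G)\setminus P_{i+1}(G)$ cannot satisfy $g^p = 1 \in P_{i+2}(G)$. The one ingredient you rightly flag as non-trivial --- that $[x]\mapsto[x^p]$ is a well-defined homomorphism on the graded pieces --- is exactly the commutator calculus for powerful groups carried out in \cite[Chapters 2--4]{DMSS}.
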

 
\begin{proof}[{\bf TFMC $\implies$ TFMC-U}] 
Let $G$ be a uniform pro-$p$ group, and suppose that there exists a finitely and tamely ramified Galois extension $L/K$ with $\Gal(L/K) \cong G$. To prove that $G$ is necessarily finite, we will use for $G$ the following key fact, proven in \cite[Section 7.3]{DMSS}.
\begin{proposition}
Every pro-$p$-group of finite rank admits a faithful representation into $\GL_{n}(\Q_{p})$ for some integer $n \geq 1$.
\end{proposition} 
Hence, there is an integer $n \geq 1$ for which there exists a faithful representation $\iota \colon G \hookrightarrow \GL_{n}(\Q_{p})$. Fixing an isomorphism $\psi$ between $\mathrm{Gal}(L/K)$ and $G$ as given in the statement, the composition of $\iota$ by $\psi$ and by the natural projection map $\Gal(\overline{K}/K) \to G$ provides a Galois representation
\[ \displaystyle \rho \colon \Gal(\overline{K}/K) \to \GL_{n}(\Q_{p}) \ . \]
Denote by $S$ the set of places in $K$ that ramify in $L$: by assumption, $S$ is a finite set and $\rho$ factors through $G_{S}$. We are now left to prove that any place in $S$ is prime to $p$, as TFMC would hence be applicable to $\rho$ and give that $\mathrm{Im}(\rho) \simeq G$ is finite, as expected.

Recall that $L/K$ is assumed to be tamely ramified. This ensures that for any prime $\p$ of $K$ and any prime $\q$ of $L$ above $\p$, the extension $L_{\q}/K_{\p}$ is tamely ramified, and there exists a unique intermediate Galois extension $K_{\p} \hookrightarrow F \hookrightarrow L_{q}$ such that $L_{q}/F$ is totally ramified and $F/K_{p}$ is unramified. The `tame quotient' of the decomposition group of $\q$ above $\p$ is then defined by $\Gal(L_{q}/F)$, and it is in particular of (pro-)order prime-to-$p$. Since $G$ is a pro-$p$-group, this implies that $L/K$ must be unramified at places above $p$, hence that any place in $S$ is prime to $p$, and we are done.
 \end{proof}
 
\begin{proof}[{\bf TFMC-U $\implies$ TFMC}] Conversely, assume that $K$ is a number field and that $S$ is a finite set of places of $K$ that are all prime to $p$, and let $\rho : G_{S} \to \GL_{n}(\Q_{p})$ be a continuous Galois representation. To use TFMC-U, we need to get a uniform pro-$p$-group our of this setting. First note that since $G_{S}$ is compact while every maximal compact subgroup of $GL_{n}(\Q_{p})$ is conjugate to $\GL_{n}(\Z_{p})$, we can assume without loss of generality that $\rho$ takes values in $\GL_{n}(\Z_{p})$.

Set $G := \mathrm{Im}(\rho) \subset \GL_{n}(\Z_{p})$. We claim that $G$ is a $p$-adic analytic group. Indeed, \cite[Corollary 8.34]{DMSS} ensures that it is enough to prove that $G$ contains an open subgroup $H$ that is a pro-$p$-group of finite rank. Let us set
\[
\mathcal{G} := \ker(\GL_n(\Z_p) \to \GL_n(\Z/p\Z)) \ \text{ and } \ H := G \cap \mathcal{G} \ . 
\]
Then $H$ is certainly a pro-$p$-group (as $G$ is), and is open in $G$ (as $\mathcal{G}$ is open in $\GL_{n}(\Z_{p})$), so we are left to check that $H$ is of finite rank. Recall that the rank of $H$ is defined as 
\[\displaystyle \mathrm{rk}(H) :=  \sup_{J \leq_c H} d(J) \ ,
\] 
where $d(J)$ denotes the minimal number of elements needed to topologically generate $J$, for $J$ a closed subgroup of $H$ (compare with \cite[Proposition 3.11 and Definition 3.12]{DMSS}). 

Note that $H$ is also closed in $\GL_{n}(\Z_{p})$, as intersection of two closed subgroups of $\GL_{n}(\Z_{p})$, namely $G$ (that is the image of a compact set under a continuous map) and $\mathcal{G}$ (that is open in $\GL_{n}(\Z_{p})$, hence closed). As a consequence, any closed subgroup of $H$ is also closed in $\mathcal{G}$. Now recall that $\mathcal{G}$ is a uniform pro-$p$-group of dimension $d(\mathcal{G}) = \rk(\mathcal{G}) = n^{2}$ (see \cite[Theorem 5.2]{DMSS}, which follows the same path as the proof of Theorem \ref{ThmMainExampleSLn} given above). This implies, thanks to \cite[Theorem 3.8]{DMSS}, that any closed subgroup $J$ of $H$ (hence of $\mathcal{G}$ too) satisfies
\[
d(J) \leq d(\mathcal{G}) = n^{2}.
\]
In particular, this shows that $H$ has finite rank, hence that $G$ is a $p$-adic analytic group. Applying \cite[Corollary 8.34]{DMSS} then shows that $G$ contains an open normal uniform pro-$p$-subgroup $\Gamma$ of finite index. Denote by $F$ the subfield extension of $K_{S}^{\ker\rho}/K$ on which $\Gamma$ acts trivially. As $\Gamma$ is normal of finite index in $G = \mathrm{Im}(\rho)$, $F/K$ is a finite Galois extension. We can now apply TFMC-U to $F/K$ and $\Gamma$ to obtain that $\Gamma$ has to be finite (and actually trivial because of Proposition \ref{uniform implies torsion free}), hence so does $G = \mathrm{Im}(\rho)$. Note that the ramification conditions of TFMC-U are satisfied since $\rho$ is unramified outside $S$ while the places in $S$ are all prime to $p$, so we are done.
\end{proof}
 
\subsection{The small dimensional cases of TFMC ($d = 1$ and $d = 2$)} \hfill \\
In this section, we will briefly see why we do not have to consider the cases $d \leq 2$ in Conjecture \ref{TFMC-Uniform}. First recall that any uniform pro-$p$-group of dimension $1$ is isomorphic to $\Z_{p}$, hence can be represented into $\GL_{1}(\Z_{p}) = \Z_{p}^{\times}$ using the map $p \mapsto 1+p$. This reduces us to the case $n=1$ of TFMC, which was proven from class field theory in Section \ref{CFT-TFMC} (see Theorem \ref{ThmTFMCn=1} in particular).

Now assume that $\Gamma$ is a uniform pro-$p$-group of dimension $2$. Using \cite[Exercise 3.11]{DMSS}, we know that $\Gamma$ is {\it meta-procyclic}, which means that there exists an element $\gamma$ in $\Gamma$ such that $\overline{\langle \gamma \rangle}$ is a normal subgroup of $\Gamma$ and the quotient $\Gamma/\overline{\langle \gamma \rangle}$ is pro-cyclic. Suppose that there exists a finitely and tamely ramified Galois extension $L/K$ with $\mathrm{Gal}(L/K)$ isomorphic to $\Gamma$. Quotienting $\Gamma$ by $\overline{\langle \gamma \rangle}$ then provides a $\Gamma/\overline{\langle \gamma \rangle} \simeq \Z_{p}$-extension of $K$ that is finitely and tamely ramified. This cannot happen as we just proved that TFMC-U holds for $d = 1$, so $L/K$ cannot exist and TFMC-U is proven for $d = 2$.

\subsection{Tame ramification and co-primality matters} 
\label{counter-example2} \hfill \\
The goal of this section is to discuss why the two main assumptions in TFMC (tame ramification and co-primality with $p$ of the places in $S$) cannot be dropped. We start by discussing the ramification assumption and give a simple example that shows why tame ramification is necessary in the statement of Conjecture \ref{TFMC}, i.e. we explain how to produce an interesting representation $\rho \colon G_{\Q, S} \to \GL_{2}(\Q_{p})$ that is wildly ramified at $p$ and whose image is infinite.

Let $\mathcal{E}$ be a non-CM elliptic curve defined over $\Q$ and $p$ be any prime integer. By Serre's open image theorem \cite{Serre72}, we know that the $p$-adic representation of $G_{\Q}$ attached to $\mathcal{E}$ has open image in $\GL_{2}(\Z_{p})$ (and is actually surjective for almost all $p$). Also note that this representation is unramified away from $p$ and from the primes dividing the conductor of $\mathcal{E}$, which certainly provides a finite set $S$. We are hence left to see that this representation is wildly ramified at $p$ to conclude. To do so, we notice that the determinant of this representation is the $p$-adic cyclotomic character $\chi_{p} \colon G_{\Q} \to \Z_{p}^\times$. As the tame quotient of $G_{\Q_{p}}$ is prime to $p$ while $\Z_{p}^{\times}$ is (up to a finite part) a pro-$p$-group, we get that $\chi_{p}$ is wildly ramified at $p$, hence so is the $p$-adic representation of $G_{\Q}$ attached to $\mathcal{E}$.

One can now wonder whether one can get rid of this wild ramification by simply twisting away the determinant. To deal with this, we need to set up a bit more notation first. We fix a prime integer $p >2$ and let $\rho \colon G_{\Q} \to \GL_{2}(\Z_{p})$ be the $p$-adic representation attached to $\mathcal{E}$: we hence have $\det \rho = \chi_{p}$. Let $\langle \ \cdot \ \rangle \colon \Z_{p}^\times \to 1 + p\Z_{p}$ denote the projection of $\Z_{p}^{\times}$ onto its pro-$p$ part. As $p$ is odd, every element of $1 + p\Z_{p}$ has a unique squareroot that is congruent to $1$ mod $p$, so we get a character $\varepsilon : G_{\Q} \to 1 + p\Z_{p}$ whose square equals $\langle \chi_p \rangle$. If we set $\rho' := \rho \otimes \varepsilon^{-1}$, then $\det \rho'$ is of finite order, so we cannot use the previous argument to show that $\rho'$ is wildly ramified at $p$. Nevertheless, note that the surjectivity of $\rho$ would ensure that $\mathrm{Im}(\rho')$ contains $\SL_2(\Z_p)$, and otherwise $\mathrm{Im}(\rho')$ will at least contain an open subgroup of $\SL_{2}(\Z_{p})$, so $\mathrm{Im}(\rho')$ cannot be finite, and TFMC predicts that $\rho'$ must be wildly ramified at $p$. We can prove this last assertion without using TFMC:  indeed, first observe that if the image of the inertia contains an open subgroup of a unipotent subgroup, i.e. the conjugate of an open subgroup of 
\[
U = \left\{\begin{pmatrix}
1 & u\\
0 & 1 
\end{pmatrix} \colon u \in \Z_{p}\right\} \ ,
\]
then so does $\mathrm{Im}(\rho')$ as $\det(U) = \{1\}$. As $U$ is a pro-$p$-group while the tame inertia quotient has pro-order prime-to-$p$, $\rho'$ cannot be tamely ramified at $p$, hence must be wildly ramified at $p$, as expected.

\begin{remark} There are many one-dimensional counterexamples: for instance, any integer $k$ defines a one-dimensional representation $\chi_{p}^{(p-1)k}$ that is unramified outside $p$ and wildly ramified at $p$. Nevertheless, such representations do not arise from geometry (since they are not De Rham representations), which is not very satisfying in the context of Fontaine-Mazur conjectures, the latter being motivated by understanding representations coming from geometry.
\end{remark}

Let us now focus on the other main assumption, namely that the places in $S$ must be co-prime to $p$. First recall that any $\Z_{p}$-extension must be unramified at primes with residue characteristic different from $p$. As $\Z_{p}$ is abelian, the finiteness of the class number requires that $\Z_{p}$-extensions cannot be everywhere unramified, hence there must be some ramification at places above $p$. In particular, this implies that the requirement claiming that $S$ contains no place above $p$ is necessary in Conjecture \ref{TFMC}.

Following Fontaine and Mazur \cite[page 44]{FM}, one can wonder whether the finiteness of $S$ is automatic, or if it is an actual assumption, at least for semisimple finite-dimensional $p$-adic representations. Ramakrishna answered this question when $n = 2$, proving in \cite{R} that under GRH, there exists an explicit irreducible $2$-dimensional $p$-adic representation that ramifies at infinitely many primes but is potentially semistable at $p$. Let us moreover mention that in \cite{KRR}, Khare and Ramakrishna proved that the conditions  {\it $S$ is finite} and {\it $S$ does not contain places above $p$} required in the statement of Conjecture \ref{TFMC} are actually independent.

Since the finiteness of $S$ is not automatic even in small dimension, one can wonder whether it is a necessary condition for Conjecture \ref{TFMC} to hold. Again, this question has been addressed by Ramakrishna and his co-authors in \cite{KLR}. More precisely, recall that a $p$-adic representation is said {\it deeply ramified at a given prime} when it does not vanish on any of the corresponding higher ramification groups of finite index. With this definition in mind, one can rephrase our question as follows: can we find a $p$-adic representation that ramifies at infinitely many primes of a number field $K$ but is not deeply ramified at $p$? The answer is yes, at least when $n = 2$ and $p  > 5$, as proven by Khare-Larsen-Ramakrishna in \cite[Main Theorem]{KLR}.

\section{Some future directions}\label{future plans} 
We end this paper by giving some future directions of research, some of them being work in progress, others being longer-term projects. First recall that the results of Hajir-Maire on which are based Section \ref{UTFMC}, but also the most recent results connected to these questions obtained by Maire \cite{M21}, require the prime $p$ to be odd. A first natural question we hence want to explore in the future is the following one.
\begin{question}
To which extend do the previous results remain valid for $p = 2$?
\end{question}

Another interesting question, standing at the core of an ongoing project of two of the authors \cite{AP21}, is to study an explicit example of biquadratic field pointed out by Boston and Buell \cite[page 290]{B1} as being close to providing an actual counterexample to Conjecture \ref{FMC3}. More precisely, we are interested in the following question.
\begin{question}
Set $K =\Q (\sqrt{-104}, \sqrt{229})$, $p= 3$ and let $L$ be the maximal everywhere unramified pro-$p$-extension of $K$.  Does $\Gal(L/K)$ have an infinite $p$-adic analytic pro-$p$-quotient?
\end{question}
Note that Boston's method cannot be used to test whether Conjecture \ref{FMC3} holds for this biquadratic field and this $p$, since we are not dealing with a cyclic extension. This hence leads to wonder whether we could extend Boston's method beyond the cyclic case, i.e. when $\Gal(K/\Q)$ is a non-cyclic extension.
\subsubsection*{Acknowledgments} We want to thank the organizers of the workshop {\it Women in Numbers Europe 3} for giving us this opportunity to work on this project. We also warmly thank Jaclyn Lang for many stimulating discussions and for earlier work on this project.


\end{document}